\documentclass[11pt]{amsart}
\usepackage{graphicx}
\usepackage{amssymb}
\usepackage{epstopdf}
\usepackage{xcolor}
\usepackage{mathtools}
\usepackage[pagebackref,colorlinks,linkcolor=red,citecolor=blue,urlcolor=blue,hypertexnames=true]{hyperref}
\usepackage[pagebackref,hypertexnames=true]{hyperref}
\usepackage{enumerate}

\DeclareGraphicsRule{.tif}{png}{.png}{`convert #1 `dirname #1`/`basename #1 .tif`.png}

\renewcommand{\le}{\leqslant}
\renewcommand{\leq}{\leqslant}
\renewcommand{\ge}{\geqslant}
\renewcommand{\geq}{\geqslant}

\newcommand{\SOTh}{\mathrm{SOT}\text{-}}

\newcommand{\N}{\mathbb{N}}

\newcommand{\cE}{\mathcal{E}}

\newcommand{\eps}{\varepsilon}


\newcommand{\er}{\mathbb{R}}

\newcommand{\cstu}{\mathrm{C}^*_u}
\newcommand{\csql}{\mathrm{C}^*_{ql}}
\newcommand{\csts}{\mathrm{C}^*_s}
\newcommand{\roeq}{\mathrm{Q}^*_u}
\newtheorem*{rigprob*}{Rigidity Problem for uniform Roe Algebras}
\newtheorem*{rigprobcorona*}{Rigidity Problem for uniform Roe Coronas}


\newcommand{\cst}{\mathrm{C}^*}
\newcommand{\cstar}{$\mathrm{C}^*$}

\newcommand{\cF}{\mathcal{F}}
\newcommand{\cP}{\mathcal{P}}
\newcommand{\bbN}{\mathbb{N}}
\newcommand{\cB}{\mathcal{B}}
\newcommand{\cK}{\mathcal{K}}





\newcommand{\brx}{B_r(x)}

\newtheorem{theorem}{Theorem}[section]
\newtheorem*{theorem*}{Theorem}
\newtheorem{proposition}[theorem]{Proposition}
\newtheorem{problem}[theorem]{Problem}
\newtheorem*{proposition*}{Proposition}
\newtheorem{lemma}[theorem]{Lemma}
\newtheorem*{lemma*}{Lemma}
\newtheorem{corollary}[theorem]{Corollary}
\newtheorem*{corollary*}{Corollar}

\newtheorem*{fact*}{Fact}
\theoremstyle{definition}
\newtheorem{definition}[theorem]{Definition}
\newtheorem*{acknowledgments}{Acknowledgments}
\newtheorem*{definition*}{Definition}
\newtheorem{claim}[theorem]{Claim}
\newtheorem*{claim*}{Claim}

\newtheorem*{conjecture*}{Conjecture}

\theoremstyle{remark}

\newtheorem*{example*}{Example}
\newtheorem{remark}[theorem]{Remark}
\newtheorem*{remark*}{Remark}

\newtheorem*{note*}{Note}
\newtheorem*{question*}{Question}


\newcommand{\norm}[1]{\left\lVert #1 \right\rVert}

\newcommand{\Manoa}{M\=anoa}
\newcommand{\Hawaii}{Hawai\kern.05em`\kern.05em\relax i}


\DeclareMathOperator{\supp}{supp}

\DeclareMathOperator{\propg}{prop}

\DeclareMathOperator{\rank}{rank}

\DeclareMathOperator{\diam}{diam}

\numberwithin{equation}{section}

\newcounter{my_enumerate_counter}
\newcommand{\pushcounter}{\setcounter{my_enumerate_counter}{\value{enumi}}}
\newcommand{\popcounter}{\setcounter{enumi}{\value{my_enumerate_counter}}}

\newcommand{\ulf}{\textrm{uniformly locally finite\ }}
\newcommand{\ulfx}{\textrm{uniformly locally finite}}

\DeclareMathOperator{\conv}{conv}

\title[{Uniform Roe algebras are rigid}]{Uniform Roe algebras of uniformly locally finite metric spaces are rigid}

\date{\today} 

\author[Baudier]{Florent P. Baudier}
\address[F. P. Baudier]{Texas A\&M University, Department of Mathematics, College Station, TX 77843-3368, USA} 
 \email{florent@math.tamu.edu}
 \urladdr{https://www.math.tamu.edu/~florent/}
\author[Braga]{Bruno M. Braga}
\address[B. M. Braga]{University of Virginia, 141 Cabell Drive, Kerchof Hall, P.O. Box 400137, Charlottesville, USA}
\email{demendoncabraga@gmail.com}
\urladdr{https://sites.google.com/site/demendoncabraga}
 
\author[Farah]{Ilijas Farah}
\address[I. Farah]{Department of Mathematics and Statistics\\
York University\\
4700 Keele Street\\
North York, Ontario\\ Canada, M3J 1P3\\
and 
Matemati\v cki Institut SANU\\
Kneza Mihaila 36\\
11\,000 Beograd, p.p. 367\\
Serbia}
\email{ifarah@yorku.ca}
\urladdr{https://ifarah.mathstats.yorku.ca}

\author[Khukhro]{Ana Khukhro}
\address[A. Khukhro]{Murray Edwards College\\
Huntingdon Road\\
University of Cambridge\\
Cambridge\\
CB3 0DF, United Kingdom}
\email{ak467@cam.ac.uk}
\urladdr{https://sites.google.com/site/anakhukhromath/}

\author[Vignati]{Alessandro Vignati}
\address[A. Vignati]{
Institut de Math\'ematiques de Jussieu (IMJ-PRG)\\
Universit\'e Paris Cit\'e\\
B\^atiment Sophie Germain\\
8 Place Aur\'elie Nemours \\ 75013 Paris, France}
\email{alessandro.vignati@imj-prg.fr}
\urladdr{http://www.automorph.net/avignati}

\author[Willett]{Rufus Willett}
\address[R. Willett]{University of \Hawaii~at \Manoa, 2565 McCarthy Mall, Keller 401A, Honolulu, HI 96816, USA} 
\email{rufus@math.hawaii.edu}
\urladdr{https://math.hawaii.edu/~rufus/}

\keywords{}
\subjclass[2010]{}

\begin{document}

\maketitle

\begin{abstract}
We show that if $X$ and $Y$ are uniformly locally finite metric spaces whose uniform Roe algebras, $\cstu(X)$ and $\cstu(Y)$, are isomorphic as \cstar-algebras, then $X$ and $Y$ are coarsely equivalent metric spaces. Moreover, we show that coarse equivalence between $X$ and $Y$ is equivalent to Morita equivalence between $\cstu(X)$ and $\cstu(Y)$. As an application, we obtain that if $\Gamma$ and $\Lambda$ are finitely generated groups, then the crossed products $\ell_\infty(\Gamma)\rtimes_r\Gamma$ and $ \ell_\infty(\Lambda)\rtimes_r\Lambda$ are isomorphic if and only if $\Gamma$ and $\Lambda$ are bi-Lipschitz equivalent. 
\end{abstract}

\section{Introduction}\label{SectionIntro}

Coarse geometry is the study of metric spaces when one forgets about the small scale structure and focuses only on large scales. For example, this philosophy underlies much of geometric group theory. As the local structure of a space is irrelevant, one typically assumes that the spaces one is working with are discrete: we will focus here on \emph{uniformly locally finite}\footnote{Also called \emph{bounded geometry} metric spaces in the literature.} metric spaces $(X,d_X)$, meaning that $\sup_{x\in X}|\brx|<\infty$ for all $r>0$, where $|\brx|$ is the cardinality of the closed ball in $X$ of radius $r$ centered at $x$. Typical examples that are important for applications are finitely generated groups with word metrics, and discretizations of non-discrete spaces such as Riemannian manifolds. There is a natural \emph{coarse category} of metric spaces considered from a large-scale point of view, and the isomorphisms in this category are called \emph{coarse equivalences}.

Here is the formal definition. Given metric spaces $(X,d_X)$ and $(Y,d_Y)$, a map $f\colon X\to Y$ is \emph{coarse} if for all $r>0$ there is $s>0$ so that 
\[d_X(x,x')\leq r\ \text{ implies } \ d_Y (f(x),f(x'))\leq s\]
for all $x,x'\in X$. If $f:X\to Y$ and $g:Y\to X$ are coarse and 
\[ \sup_{x\in X}d_X(x,g(f(x)))<\infty \ \text{ and }\ \sup_{y\in Y}d_Y (y,f(g(y)))<\infty\]
then $f$ and $g$ are called \emph{mutual coarse inverses}, each of $f$ and $g$ is called a \emph{coarse equivalence}, and $X$ and $Y$ are said to be \emph{coarsely equivalent}. 

Associated to the large-scale structure of a uniformly locally finite metric space is a \cstar-algebra, i.e., a norm-closed and adjoint-closed algebra of bounded operators on a complex Hilbert space, called the \emph{uniform Roe algebra of $X$} and denoted by $\cstu(X)$. Prototypical versions of this \cstar-algebra were introduced by Roe \cite{Roe:1988qy} for index-theoretic purposes. The theory was consolidated in the 1990s by Roe, Yu and others, and uniform Roe algebras have since found applications in index theory (for example, \cite{Spakula:2009tg,Engel:2018vm}), \cstar-algebra theory (for example, \cite{Rordam:2010kx,Li:2017ac}), single operator theory (for example, \cite{Rabinovich:2004xe,Spakula:2014aa}), topological dynamics (for example, \cite{Kellerhals:2013aa,Brodzki:2015kb}), and mathematical physics (for example, \cite{Cedzich:2018wx,Kubota2017}). 

Here is the formal definition. For a metric space $(X,d_X)$, the \emph{propagation} of an $X$-by-$X$ matrix $a=[a_{xy}]$ of complex numbers is
\[
\propg(a) \coloneqq \sup\{d_X(x,y)\mid a_{xy}\neq 0\}\in [0,\infty].
\]
If $a=[a_{xy}]$ has finite propagation and uniformly bounded entries, then $a$ canonically induces a bounded operator on the Hilbert space $\ell_2(X)$ as long as $(X,d_X)$ is uniformly locally finite. For any such $(X,d_X)$, the operators with finite propagation form a $*$-algebra, and $\cstu(X)$ is the \cstar-algebra defined as the norm closure of this $*$-algebra.

For many applications of uniform Roe algebras, one wants to know how much of the underlying metric geometry is remembered by $\cstu(X)$. This leads to the foundational question below.

\begin{problem}[Rigidity of uniform Roe algebras]
If the uniform Roe algebras of uniformly locally finite metric spaces are $*$-isomorphic, are the underlying metric spaces coarsely equivalent? \label{ProblemRig}
\end{problem}

Recently, the rigidity problem for uniform Roe algebras has been extensively studied. For example: \cite{SpakulaWillett2013AdvMath} started this study; \cite{BragaFarah2018Trans} introduced several new ideas that are relevant for this paper; and \cite{LiSpakulaZhang2020} represents the most recent developments before this paper. All of these papers (and others) give positive answers to Problem~\ref{ProblemRig} in the presence of additional geometric conditions on the underlying metric spaces.

\subsection{Main results}

In this paper, we give an \emph{unconditional} positive answer to the rigidity problem. 

\begin{theorem} \label{T1}
Let $X$ and $Y$ be uniformly locally finite metric spaces. If $\cstu(X)$ and $\cstu(Y)$ are $*$-isomorphic, then $X$ and $Y$ are coarsely equivalent. 
\end{theorem}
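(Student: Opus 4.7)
My approach begins by upgrading the given $*$-isomorphism $\Phi\colon\cstu(X)\to\cstu(Y)$ to a spatial implementation. Since $e_{xx}\cstu(X)e_{xx}=\bbC e_{xx}$, the rank-one projections are exactly the minimal projections of $\cstu(X)$, and the ideal they generate has norm closure equal to $\cK(\ell_2(X))$; this socle is algebraically defined, hence preserved by $\Phi$, yielding $\Phi(\cK(\ell_2(X)))=\cK(\ell_2(Y))$. Any isomorphism of compact-operator algebras is spatial, so there is a unitary $U\colon\ell_2(X)\to\ell_2(Y)$ with $\Phi|_{\cK(\ell_2(X))}=\Ad(U)$, and the standard multiplier-algebra extension then forces $\Phi=\Ad(U)$ on all of $\cstu(X)$.

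Set $\xi_x\coloneqq Ue_x$ for $x\in X$; the family $(\xi_x)_{x\in X}$ is an orthonormal basis of $\ell_2(Y)$, and my goal is to construct $f\colon X\to Y$ by locating where each $\xi_x$ concentrates. I would aim to establish universal constants $\varepsilon_0,R_0>0$ such that, for every $x\in X$, some $f(x)\in Y$ satisfies $\sum_{y\in B_{R_0}(f(x))}|\xi_x(y)|^2\geq 1-\varepsilon_0$. Granted this, coarseness of $f$ follows from propagation estimates applied to $\Phi(|e_{x'}\rangle\langle e_x|)=|\xi_{x'}\rangle\langle\xi_x|$ for $x,x'$ at bounded distance in $X$: these rank-one operators lie in $\cstu(Y)$, and their approximability by uniformly bounded-propagation operators forces $f(x)$ and $f(x')$ to lie within uniformly bounded distance in $Y$. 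Running the same construction for $\Phi^{-1}$ with unitary $U^*$ yields $g\colon Y\to X$, and the identities $\Phi^{-1}\Phi=\id$ and $\Phi\Phi^{-1}=\id$, together with uniform local finiteness, force $f$ and $g$ to be mutual coarse inverses.

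The main obstacle, and the source of the theorem's unconditional strength, is the uniform concentration step. A single rank-one projection in $\cstu(Y)$ need not be localized: the uniform unit vector on a finite set of large size defines a rank-one projection in $\cstu(Y)$ whose approximate propagation grows with the size of the support, so no choice of $f(x)$ can be read off from $\Phi(e_{xx})$ in isolation. My plan is to leverage the entire system $\{\Phi(e_{xx})\}_{x\in X}$: these projections are pairwise orthogonal, sum SOT to $I_Y$, and are indexed by the uniformly locally finite metric space $X$; in particular, for every finite $F\subseteq X$ of bounded diameter, $\Phi(\sum_{x\in F}e_{xx})$ has rank $|F|$ and is norm-close to a finite-propagation operator in $\cstu(Y)$, with closeness holding uniformly in the location of $F$. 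Combining this uniformity with the bound on $|B_r(x)|$ supplied by uniform local finiteness of $X$, via a compactness argument, should yield the constants $\varepsilon_0,R_0$. Once this concentration estimate is in hand, the remainder of the proof is bookkeeping around the spatial implementation, elementary propagation estimates, and the symmetric application of $\Phi^{-1}$.
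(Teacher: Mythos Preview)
Your setup is correct and matches the paper: the spatial implementation, the orthonormal family $\xi_x=Ue_x$, the reduction to a uniform concentration (or large-coordinate) statement, and the subsequent bookkeeping via Proposition~\ref{PropOnceMapsFoundWeAreGood} are all as in the paper. You have also correctly identified the crux: nothing about a single $\Phi(e_{xx})$ forces $\xi_x$ to be localized, so one must exploit the full family.

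The gap is in your proposed resolution of that crux. Two points. First, the information you propose to use is miscalibrated: restricting to finite $F\subseteq X$ of bounded diameter and invoking uniform local finiteness of $X$ gives you control of $|F|$ and of the rank of $\Phi(\chi_F)$, but the localization you need is in $Y$, and neither the $X$-diameter of $F$ nor $\sup_{x}|B_r^X(x)|$ says anything about where $\Phi(\chi_F)$ sits in $\ell_2(Y)$. What actually matters is equi-approximability of $\Phi(\chi_A)$ in $\cstu(Y)$ over \emph{all} $A\subseteq X$ (Lemma~\ref{LemmaUnifApprox}), together with uniform local finiteness of $Y$. Second, ``a compactness argument'' is not a placeholder one can fill in here: this is precisely the step that, prior to this paper, could only be carried out under geometric hypotheses (property~A, or the ghost-projection conditions of \cite{BragaFarah2018Trans}, \cite{LiSpakulaZhang2020}). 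No soft argument is known.

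What the paper does at this step is the following. Fix $y\in Y$, let $\pi$ be the projection onto $\ell_2(B_r(y))$, and define a vector measure $\mu\colon\cP(X)\to\ell_2(B_r(y))$ by $\mu(A)=\pi\Phi(\chi_A)\delta_y$. If every $\|\Phi(e_{xx})\delta_y\|$ were small, then a quantitative convexity estimate for the range of $\mu$ (Lemma~\ref{lem:approx-infinite}, proved either via Shapley--Folkman or via Lindenstrauss' proof of Lyapunov's theorem) produces $A\subseteq X$ with $\pi\Phi(\chi_A)\delta_y$ close to $\tfrac12\pi\delta_y$; equi-approximability then removes $\pi$, and the elementary Lemma~\ref{LemmaReferee} (that a projection cannot approximately halve a vector) gives a contradiction. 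This yields $\inf_{y}\sup_x\|\Phi(e_{xx})\delta_y\|>0$ unconditionally, which is exactly the input Proposition~\ref{PropOnceMapsFoundWeAreGood} needs. Your outline becomes a proof once this vector-measure argument replaces the unspecified compactness step.
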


We also obtain the analogue of this theorem for the $C^*$-algebra of `quasi-local' operators: see Theorem~\ref{ql the} below. We will discuss an outline of the proof in Section~\ref{Subsecproof} below. For now, let us focus on some applications and elaborations.

A first application of Theorem~\ref{T1} regards groups. Associated to an action of a group $\Gamma$ on a compact topological space $K$, there is a \cstar-algebra \emph{crossed product} $C(K)\rtimes_r\Gamma$ that models the underlying dynamics. In particular, one can do this when $K=\beta\Gamma$, the \v{C}ech-Stone compactification of $\Gamma$, which is the universal compact $\Gamma$-space in some sense. If $\Gamma$ is discrete, $C(\beta \Gamma)$ naturally identifies with $\ell_\infty(\Gamma)$, so we get the crossed product $\ell_\infty(\Gamma)\rtimes_r\Gamma$. If $\Gamma$ is a finitely generated group, then it becomes a uniformly locally finite metric space when equipped with a word metric. The uniform Roe algebra of $\Gamma$ then identifies with the \cstar-algebra crossed product $\ell_\infty(\Gamma)\rtimes_r\Gamma$ discussed above, i.e., there is a canonical $*$-isomorphism $\cstu(\Gamma)\cong \ell_\infty(\Gamma)\rtimes_r\Gamma$ (see \cite[Proposition 5.1.3]{BrownOzawa08}).

The following result is of interest in pure \cstar-algebra theory and topological dynamics (see Corollary~\ref{bij cor 2} below for a more general statement).

\begin{corollary}\label{group cor}
\label{T1CorGroups}
Let $\Gamma$ and $\Lambda$ be finitely generated groups. The following are equivalent:
\begin{enumerate}[(1)]
\item With any choice of word metrics, $\Gamma$ and $\Lambda$ are bi-Lipschitz equivalent.\footnote{Metric spaces $(X,d_X)$ and $(Y,d_Y)$ are bi-Lipschitz equivalent if there is a bijection $f\colon X \to Y$ such that $f$ and $f^{-1}$ are Lipschitz.
}
\item The \cstar-algebras $\ell_\infty(\Gamma)\rtimes_r\Gamma$ and $\ell_\infty(\Lambda)\rtimes_r\Lambda$ are $*$-isomorphic. 
\end{enumerate} 
\end{corollary}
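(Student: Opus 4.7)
The implication $(1)\Rightarrow(2)$ is standard. A bi-Lipschitz bijection $\varphi\colon\Gamma\to\Lambda$ (with Lipschitz constant at most $L$ in both directions) induces the permutation unitary $U_\varphi\colon\ell_2(\Gamma)\to\ell_2(\Lambda)$ defined by $U_\varphi\delta_x=\delta_{\varphi(x)}$. Conjugation by $U_\varphi$ sends an operator of propagation $r$ to one of propagation at most $Lr$, and the symmetric bound using $\varphi^{-1}$ shows that $\mathrm{Ad}(U_\varphi)$ restricts to a bijection between the finite-propagation $*$-subalgebras. Extending by continuity yields a $*$-isomorphism $\cstu(\Gamma)\cong\cstu(\Lambda)$, which transfers to the crossed products via the identification $\cstu(\Gamma)\cong\ell_\infty(\Gamma)\rtimes_r\Gamma$. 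The choice of word metric is irrelevant, since any two word metrics on a given finitely generated group are themselves bi-Lipschitz equivalent.

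For $(2)\Rightarrow(1)$, the same identification converts the hypothesis into a $*$-isomorphism $\Phi\colon\cstu(\Gamma)\to\cstu(\Lambda)$, and Theorem~\ref{T1} yields a coarse equivalence $f\colon\Gamma\to\Lambda$. A crucial simplification in the group setting is that coarse maps between finitely generated groups equipped with word metrics are automatically Lipschitz: two elements at word distance $n$ can be joined by a chain of $n$ adjacent pairs, and coarse control at scale $1$ supplies a uniform bound $s$ on the image-distance of each adjacent pair, so the triangle inequality gives $d_\Lambda(f(x),f(y))\le sn$ whenever $d_\Gamma(x,y)\le n$. Hence both $f$ and any coarse inverse are Lipschitz.

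The main obstacle is that $f$ need not be a bijection, and by examples of Dymarz coarsely equivalent finitely generated groups need not be bi-Lipschitz equivalent in general. The plan is to exploit the $*$-isomorphism $\Phi$ more strongly than Theorem~\ref{T1} alone does: although the abstract identifies Morita equivalence with coarse equivalence, a genuine $*$-isomorphism additionally preserves the unit and the rank/atomic structure of the canonical diagonal $\ell_\infty\subset\cstu$. Specifically, the rank-one projections $e_x$ (for $x\in\Gamma$) are minimal in $\cstu(\Gamma)$, and their images $\Phi(e_x)$ form a maximal pairwise-orthogonal family of rank-one projections in $\cstu(\Lambda)$ summing (in the strong operator topology) to the identity. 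A controlled perturbation of $\Phi$, using the rigidity machinery developed to prove Theorem~\ref{T1}, should match these images with the diagonal atoms $\{e_y:y\in\Lambda\}$ and thereby produce a bijection $\varphi\colon\Gamma\to\Lambda$ at bounded distance from $f$. Any such $\varphi$ is coarse (being close to the coarse equivalence $f$), bijective by construction, and hence bi-Lipschitz by the automatic-Lipschitz observation above. I expect this atom-matching step to be the technical crux, and it is the point at which the stronger hypothesis of $*$-isomorphism (as opposed to mere Morita equivalence) has to be fully used.
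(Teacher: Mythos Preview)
Your handling of $(1)\Rightarrow(2)$ and the observation that coarse maps between word-metric groups are automatically Lipschitz are both fine and match the paper. The gap is in the ``atom-matching'' step you propose for $(2)\Rightarrow(1)$.

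The paper does \emph{not} proceed by matching the rank-one projections $\Phi(e_{xx})$ with the diagonal atoms $e_{yy}$. In fact, Remark~\ref{no bij rem} records that it is open whether a $*$-isomorphism $\cstu(X)\cong\cstu(Y)$ of uniform Roe algebras of \ulf metric spaces always yields a \emph{bijective} coarse equivalence; the known atom-matching arguments (as in \cite[Corollary~6.13]{WhiteWillett20}) require property~A. So the step you flag as the ``technical crux'' is, as you phrased it, not presently available in general, and the $*$-isomorphism hypothesis alone is not known to suffice for it.

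What makes the group case work is a dichotomy that exploits the group structure rather than the rigidity machinery: see Corollary~\ref{bij cor 2}. If $\Gamma$ is non-amenable, Whyte's theorem \cite[Theorem~4.1]{Whyte99} (cf.\ \cite[Theorem~5.1]{WhiteWillett20}) upgrades \emph{any} coarse equivalence between \ulf spaces to a bijective one --- this uses only the coarse equivalence from Theorem~\ref{T1}, not the isomorphism further. If $\Gamma$ is amenable, then as a group it automatically has property~A \cite[Lemma~6.2]{Willett09}, and under property~A one can invoke \cite[Corollary~6.13]{WhiteWillett20} (or \cite[Theorem~1.3]{BragaFarahVignati2020AnnInstFour}) to get a bijective coarse equivalence directly from the $*$-isomorphism. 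In either case one obtains a bijective coarse equivalence, which by your quasi-geodesic argument is bi-Lipschitz. You correctly anticipated Dymarz's examples as the obstruction to a naive approach; the missing idea is that the amenable/non-amenable split, together with property~A for amenable groups, circumvents it.
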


Our next main result concerns \emph{Morita equivalence}. This is a notion of isomorphism for \cstar-algebras that is a little weaker than $*$-isomorphism. Roughly, it says that the \cstar-algebras involved are $*$-isomorphic `up to multiplicity', and is typically considered the `correct' notion of isomorphism for \cstar-algebras in noncommutative geometry. That Morita equivalence of uniform Roe algebras is connected to coarse equivalence of the underlying spaces seems to have been guessed at by Gromov in the early 90s \cite[page 263]{Gromov93}. Brodzki, Niblo, and Wright \cite[Theorem 4]{BNW07} subsequently showed that coarse equivalence of uniformly locally finite metric spaces implies Morita equivalence of their uniform Roe algebras. Our methods allow us to obtain that the converse also holds. 

\begin{theorem}\label{TMorita}
Let $X$ and $Y$ be uniformly locally finite metric spaces. The following are equivalent:
\begin{enumerate}[(1)]
\item\label{Item1TMorita} $X$ and $Y$ are coarsely equivalent.
\item\label{Item2TMorita} $\cstu(X)$ and $\cstu(Y)$ are Morita equivalent.
\end{enumerate}
\end{theorem}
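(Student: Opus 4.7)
The direction (1) $\Rightarrow$ (2) was established by Brodzki, Niblo, and Wright \cite[Theorem~4]{BNW07} via an explicit construction of an imprimitivity bimodule from a given coarse equivalence, so the novel content is entirely in the converse. My plan for (2) $\Rightarrow$ (1) is to reduce Morita equivalence between $\cstu(X)$ and $\cstu(Y)$ to a $*$-isomorphism $\cstu(Y)\cong\cstu(Z)$ for some uniformly locally finite metric space $Z$ coarsely equivalent to $X$, after which Theorem~\ref{T1} applied to $Z$ and $Y$ completes the proof.

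The first step is a standard fact about Morita equivalence of unital \cstar-algebras: such an equivalence is equivalent to the existence of an integer $n\ge 1$ and a full projection $p\in M_n(\cstu(X))$ such that $p\, M_n(\cstu(X))\, p\cong\cstu(Y)$. Noting that $M_n(\cstu(X))$ is canonically identified with $\cstu(X\times[n])$, where $[n]$ denotes the $n$-point space with the bounded metric, and that the first coordinate projection $X\times[n]\to X$ is a coarse equivalence, the problem reduces to the following \emph{recovery lemma}: if $W$ is uniformly locally finite and $p\in\cstu(W)$ is a full projection, then $p\cstu(W)p\cong\cstu(Z)$ for some uniformly locally finite $Z$ coarsely equivalent to $W$.

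To prove the recovery lemma, I would approximate $p$ in norm by a finite-propagation projection and analyze the fiberwise structure of its range; this produces a natural candidate for $Z$ as a \emph{multi-section} embedded into $W\times\bbN$, with fullness of $p$ translating into the statement that the projection $Z\to W$ has coarsely dense image, so that $Z$ is coarsely equivalent to $W$. The required isomorphism $p\cstu(W)p\cong\cstu(Z)$ is then to be implemented by a Murray--von~Neumann equivalence between $p$ and the diagonal projection onto $\ell_2(Z)$, computed inside a sufficiently large matrix amplification of $\cstu(W)$.

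The chief obstacle is that projections in $\cstu(W)$ are not in general Murray--von~Neumann equivalent to diagonal projections, due to potential $K_0$-theoretic obstructions, so one has to amplify further: inside $M_k(\cstu(W))=\cstu(W\times[k])$, any fixed projection becomes equivalent to a diagonal one for large enough $k$, at the price of enlarging $W$ to the coarsely equivalent $W\times[k]$. Controlling the geometry of the resulting diagonal subset so that it remains uniformly locally finite and coarsely dense, and checking that the implementing $*$-isomorphism is compatible with this coarse structure, is where I expect the argument to become delicate.
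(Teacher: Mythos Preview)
Your handling of (1)$\Rightarrow$(2) matches the paper. For (2)$\Rightarrow$(1), however, the paper does \emph{not} attempt a recovery lemma of the sort you describe. Instead it uses the Brown--Green--Rieffel theorem: Morita equivalence of the $\sigma$-unital algebras $\cstu(X)$ and $\cstu(Y)$ is equivalent to a $*$-isomorphism of their stabilizations $\csts(X)=\cstu(X)\otimes\cK(H)$ and $\csts(Y)=\cstu(Y)\otimes\cK(H)$. The paper then proves a separate rigidity theorem for stable uniform Roe algebras (Theorem~\ref{T1.Stable}), using the same vector-measure machinery as Theorem~\ref{T1} but adapted to operators on $\ell_2(X,H)$; this yields the coarse equivalence directly. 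So your reduction-to-Theorem~\ref{T1} strategy is a genuinely different route.

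The gap in your route is exactly where you anticipate it. The assertion that ``inside $M_k(\cstu(W))$, any fixed projection becomes equivalent to a diagonal one for large enough $k$'' is, in $K$-theoretic language, the claim that the inclusion $\ell_\infty(W)\hookrightarrow\cstu(W)$ induces a surjection on $K_0$. This is known to fail in general: for box spaces of residually finite groups with property~(T) (or more generally for expanders), there exist noncompact ghost projections in $\cstu(W)$ whose $K_0$-classes do not lie in the image of $K_0(\ell_\infty(W))$. Fullness of your projection $p$ does not help here---a ghost projection can be added to $1$ inside $M_2(\cstu(W))$ to produce a full projection with the same obstruction. Consequently your recovery lemma, as stated, cannot be proved for arbitrary \ulf spaces, and any argument along these lines would need an additional geometric hypothesis (such as property~A), which defeats the purpose. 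The paper's passage through $\csts$ sidesteps the issue entirely: one never needs to identify a corner of $\cstu(X)$ as another uniform Roe algebra, because the stable algebra absorbs all such multiplicities uniformly.
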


Our findings also allow us to remove the geometric assumptions from the main result of \cite{BragaFarahVignati2018}. Precisely, if $X$ is a uniformly locally finite metric space then the compact operators $\mathcal{K}(\ell_2(X))$ form an ideal in $\cstu(X)$. The associated quotient is the \emph{uniform Roe corona of $X$}, denoted by $\roeq(X)$. In \cite{BragaFarahVignati2018}, the authors investigate whether rigidity also holds given the weaker assumption of isomorphism between uniform Roe coronas. In this paper we obtain the following:
 
\begin{theorem} \label{T1.Corona}
Let $X$ and $Y$ be uniformly locally finite metric spaces. If $\roeq(X)$ and $\roeq(Y)$ are $*$-isomorphic and one assumes appropriate forcing axioms\footnote{For the set theorist reader, this result is a theorem in $\mathrm{ZFC}+\mathrm{OCA}_{\mathrm{T}}+\mathrm{MA}_{\aleph_1}$.}, then $X$ and $Y$ are coarsely equivalent. 
\end{theorem}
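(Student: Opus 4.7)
The plan is to reduce Theorem~\ref{T1.Corona} to Theorem~\ref{T1} by lifting any corona isomorphism to an algebra-level isomorphism, with the forcing axioms doing the heavy work. Concretely, given a $*$-isomorphism $\Phi \colon \roeq(X) \to \roeq(Y)$, the goal is to produce a $*$-isomorphism $\tilde\Phi \colon \cstu(X) \to \cstu(Y)$ whose reduction modulo the compacts coincides with $\Phi$. Once such a lift exists, Theorem~\ref{T1} immediately yields coarse equivalence of $X$ and $Y$.

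To produce $\tilde\Phi$, one follows the Farah-style strategy for establishing \emph{triviality} of isomorphisms between corona-type C$^*$-algebras, of which the paradigm is Farah's theorem that all automorphisms of the Calkin algebra are inner under $\mathrm{OCA}+\mathrm{MA}_{\aleph_1}$. For each countable family of localized elements of $\cstu(X)$---say, finite-propagation operators supported on the cells of a partition $X=\bigsqcup_n X_n$ into finite pieces---one chooses norm-bounded lifts in $\cstu(Y)$ of their images under $\Phi$. The axiom $\mathrm{OCA}_T$ is then used to control the combinatorics of all possible lift-selections and to produce a selection that is coherent across overlapping families, while $\mathrm{MA}_{\aleph_1}$ is used in a transfinite diagonalization to glue the coherent local selections into a $*$-homomorphism on a dense $*$-subalgebra of $\cstu(X)$, which extends by continuity. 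Applying the same machinery to $\Phi^{-1}$ and matching the two lifts (possibly after an inner perturbation by a unitary) yields the desired $*$-isomorphism $\tilde\Phi$.

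The main obstacle is to execute this lifting in complete generality, without any geometric assumptions on $X$ or $Y$. The precursor paper \cite{BragaFarahVignati2018} required geometric hypotheses (such as property~A or finite asymptotic dimension) for two distinct reasons: to help produce the lift, and to convert the resulting algebra-level isomorphism into a coarse equivalence of the underlying spaces. The second of these is now entirely obviated by the unconditional Theorem~\ref{T1}, but the first calls for a careful re-examination of the set-theoretic part of \cite{BragaFarahVignati2018} to verify that the combinatorial arguments continue to work with no metric-geometric input whatsoever. Once that is done and $\tilde\Phi$ is in hand, Theorem~\ref{T1} completes the proof.
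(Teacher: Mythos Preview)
Your proposal aims higher than what the paper actually proves, and that overreach is a genuine gap. You propose to lift the corona isomorphism to a full $*$-isomorphism $\tilde\Phi\colon \cstu(X)\to\cstu(Y)$ and then invoke Theorem~\ref{T1}. But obtaining such a full lift under $\mathrm{OCA}_T+\mathrm{MA}_{\aleph_1}$, with no geometric hypotheses, is not established anywhere---not in \cite{BragaFarahVignati2018}, and not in this paper. The Farah-style machinery you describe delivers much less in this setting: what \cite[Theorem~1.5]{BragaFarahVignati2018} provides (already unconditionally, under $\mathrm{OCA}_T+\mathrm{MA}_{\aleph_1}$) is only that $\Lambda$ and $\Lambda^{-1}$ are \emph{liftable on the diagonals}, i.e.\ there exist strongly continuous $*$-homomorphisms $\Phi\colon\ell_\infty(X)\to\cstu(Y)$ and $\Psi\colon\ell_\infty(Y)\to\cstu(X)$ lifting $\Lambda$ on $\ell_\infty$. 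Your diagnosis that the set-theoretic lifting in \cite{BragaFarahVignati2018} itself required geometric input is incorrect: the geometric assumptions there were used only in the second stage, to extract a coarse equivalence from the diagonal lift.

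The paper's actual argument stays at the level of this partial lift. One feeds the family $(\Psi(e_{yy}))_{y\in Y}$ (together with the finite-rank defect projection $p=1-\Psi(1)$) into Corollary~\ref{CorollaryMeasureURA}---this is precisely where the new vector-measure technique enters---to obtain cofinite $X'\subseteq X$ and a map $f\colon X'\to Y$ with $\inf_{x\in X'}\|\Psi(e_{f(x)f(x)})\delta_x\|>0$; then \cite[Lemma~6.3]{BragaFarahVignati2018} converts this into $\inf_{x\in X'}\|\Phi(e_{xx})\delta_{f(x)}\|>0$. By symmetry one gets the analogous statement on a cofinite $Y'\subseteq Y$, and \cite[Theorem~6.11]{BragaFarahVignati2018} then yields the coarse equivalence directly. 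Theorem~\ref{T1} is never invoked, because one never has an honest isomorphism of uniform Roe algebras to apply it to.
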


\begin{remark}
There is also a ``non-uniform'' Roe algebra $C^*(X)$ associated to a bounded geometry metric space: roughly, one defines this $C^*$-algebra by taking the matrix entries to be compact operators, as opposed to complex numbers.  Our methods do not obviously apply to Roe algebras: we refer to Remark~\ref{no roe rem} for detailed definitions, and discussion of the issues that arise.  In the case of Roe algebras, the state of the art rigidity result is obtained in the work of Li, \v{S}pakula, and Zhang \cite{LiSpakulaZhang2020} mentioned above; the current paper offers no real improvements.
\end{remark}

\subsection{The road to rigidity}\label{Subsecproof}

We now discuss our methods of proof in more detail. If $H$ is a Hilbert space then $\cB(H)$ denotes the \cstar-algebra of all bounded operators on $H$. The strong operator topology on $\cB(H)$ is the topology of pointwise convergence on $\cB(H)$. We write ``SOT'' as an abbreviation for ``strong operator topology'' and ``$\SOTh\sum$'' for a sum that converges in the strong operator topology. 

As already noted above, the \cstar-algebra of compact operators $\mathcal K(\ell_2(X))$ is an ideal in $\cstu(X)$, and in fact is the unique minimal ideal. As a result, a $*$-isomorphism between uniform Roe algebras of \ulf metric spaces sends compact operators to compact operators. Isomorphisms of the compact operators must be ``spatially implemented'', i.e., given by conjugation by an isometric isomorphism between the corresponding Hilbert spaces (see for example \cite[Corollary 4.1.8]{Dixmier:1977vl}). From this discussion, it is not difficult to deduce the following result.


\begin{lemma}[{\cite[Lemma 3.1]{SpakulaWillett2013AdvMath}}] \label{LemmaIsoImplementedUnitary}
Let $X$ and $Y$ be \ulf metric spaces and $\Phi\colon\cstu(X)\to \cstu(Y)$ be a $*$-isomorphism. Then there is an isometric isomorphism $u\colon\ell_2(X)\to \ell_2(Y)$ so that 
$\Phi(a)=uau^*$ for all $a\in \cstu(X)$. In particular, $\Phi$ is rank-preserving and continuous for the strong operator topology. \qed
\end{lemma}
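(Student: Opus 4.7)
The plan is to identify $\mathcal{K}(\ell_2(X))$ inside $C^*_u(X)$ in an abstract, isomorphism-invariant way, deduce that $\Phi$ restricts to a $*$-isomorphism between the compacts, invoke the classical spatial implementation theorem for $*$-isomorphisms of $\mathcal{K}(H)$, and finally propagate the implementation from the compacts to all of $C^*_u(X)$ via a non-degeneracy argument.

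First I would observe $\mathcal{K}(\ell_2(X))\subseteq C^*_u(X)$: each rank-one matrix unit $e_{xy}$ has propagation $d_X(x,y)<\infty$, and the linear span of these matrix units is norm-dense in $\mathcal{K}(\ell_2(X))$. Next I would characterize rank-one projections of $B(\ell_2(X))$ purely algebraically inside $C^*_u(X)$: a projection $p\in C^*_u(X)$ is \emph{minimal}, meaning $pC^*_u(X)p=\mathbb{C}p$, if and only if it has rank one in $B(\ell_2(X))$. The nontrivial direction uses
\[
\mathcal{K}(p\ell_2(X))=p\mathcal{K}(\ell_2(X))p\subseteq pC^*_u(X)p=\mathbb{C}p,
\]
which forces $\dim(p\ell_2(X))=1$. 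Since minimality is preserved by $*$-isomorphisms, $\Phi$ maps rank-one projections to rank-one projections. Taking finite orthogonal sums and applying the spectral theorem extends this to $\Phi(\mathcal{K}(\ell_2(X)))\subseteq \mathcal{K}(\ell_2(Y))$, and running the same argument for $\Phi^{-1}$ yields equality.

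With $\Phi|_{\mathcal{K}(\ell_2(X))}$ a $*$-isomorphism onto $\mathcal{K}(\ell_2(Y))$, the classical spatial implementation theorem \cite[Corollary 4.1.8]{Dixmier:1977vl} furnishes an isometric isomorphism $u\colon\ell_2(X)\to\ell_2(Y)$ with $\Phi(k)=uku^*$ for every $k\in\mathcal{K}(\ell_2(X))$. For arbitrary $a\in C^*_u(X)$ and any compact $k$, multiplicativity of $\Phi$ gives
\[
\Phi(a)\Phi(k)=\Phi(ak)=u(ak)u^*=(uau^*)\Phi(k),
\]
so $(\Phi(a)-uau^*)\cdot\mathcal{K}(\ell_2(Y))=\{0\}$. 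Because $\mathcal{K}(\ell_2(Y))$ acts non-degenerately on $\ell_2(Y)$, this forces $\Phi(a)=uau^*$. The final assertions are then immediate: $uau^*$ has the same rank as $a$, and conjugation by a fixed unitary is SOT-continuous.

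As for the main obstacle, there is no real difficulty: the only mildly subtle step is the algebraic characterization of rank-one projections, which is standard once $\mathcal{K}(\ell_2(X))\subseteq C^*_u(X)$ is noted. One minor point worth flagging is that uniform local finiteness does not force $X$ to be countable, so the Hilbert spaces $\ell_2(X)$ and $\ell_2(Y)$ may be non-separable; the cited spatial implementation result of Dixmier, however, holds for Hilbert spaces of arbitrary dimension, so no additional care is required.
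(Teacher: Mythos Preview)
Your proof is correct and follows essentially the same route as the paper's discussion preceding the lemma (which defers to \cite{SpakulaWillett2013AdvMath} for details): identify $\mathcal{K}(\ell_2(X))$ inside $C^*_u(X)$ in an isomorphism-invariant way, apply Dixmier's spatial implementation theorem to the restriction, and extend by non-degeneracy. The only cosmetic difference is that the paper phrases the first step via ``$\mathcal{K}(\ell_2(X))$ is the unique minimal ideal'' whereas you use the characterization of rank-one projections as minimal projections; these are equivalent ways of pinning down the compacts.
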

The automatic SOT-continuity of isomorphisms between uniform Roe algebras will be very important for us.
In addition to this basic observation, our proof of Theorem~\ref{T1} has two main ingredients at its core: 
\begin{enumerate}[(I)]
\item \label{equiapprox} the ``equi-approximability'' of certain families of operators by operators with uniformly bounded propagation (see Lemma~\ref{LemmaUnifApprox});
\item \label{lowbound} a uniform lower bound on certain matrix coefficients.\footnote{This was formalized as \emph{rigidity} of a $*$-isomorphism in \cite[page 1008]{BragaFarah2018Trans}.}
\end{enumerate}

Let us first look at equi-approximability. We need a definition which quantifies how well a bounded operator can be approximated by a finite propagation operator.

\begin{definition} \label{Def.e-m-approximated} Let $X$ be a metric space, $\varepsilon>0$, and $r\geq0$. An operator $a$ in $\cB(\ell_2(X))$ is \emph{$\varepsilon$-$r$-approximable} if there exists $b\in \cB(\ell_2(X))$ with propagation at most $r$ such that $\|a-b\|\leq \varepsilon$. 
\end{definition}

The key ``equi-approximability lemma'' was obtained as an application of the Baire category theorem and diagonalization methods in \cite[Section 4]{BragaFarah2018Trans} (a weaker version appeared earlier in \cite[Lemma 3.2]{SpakulaWillett2013AdvMath}).

\begin{lemma}[{\cite[Lemma 4.9]{BragaFarah2018Trans}}]\label{equiapprox lem}
Let $X$ be a uniformly locally finite metric space and let $(a_n)_n$ be a sequence of operators so that $\SOTh\sum_{n\in M}a_n$ converges to an element of $\cstu(X)$ for all $M\subseteq \N$. Then for all $\eps>0$ there is $r>0$ so that $\SOTh\sum_{n\in M} a_n$ is $\varepsilon$-$r$-approximable for all $M\subseteq \N$.\qed \label{LemmaUnifApprox}
\end{lemma}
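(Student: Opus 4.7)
My plan is to work on the compact metric Cantor space $\mathcal X:=2^{\N}$ of subsets of $\N$, set $T_M:=\SOTh\sum_{n\in M}a_n\in\cstu(X)$ for $M\in\mathcal X$, and combine the Baire category theorem with a boundary correction. Fix $\varepsilon>0$. Since $\cstu(X)$ is the norm-closure of the finite-propagation operators and $T_M\in\cstu(X)$ for every $M$, the sets
\[E_r:=\{M\in\mathcal X:T_M\text{ is }(\varepsilon/2)\text{-}r\text{-approximable}\},\qquad r\in\N,\]
cover $\mathcal X$. The strategy is to show each $E_r$ is closed, extract via Baire a basic clopen set $U\subseteq E_{r_0}$, and then deduce that a single enlarged $r$ works for \emph{all} $M\in\mathcal X$ by a bounded correction.

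To get closedness of $E_r$, I would first establish $C:=\sup_M\|T_M\|<\infty$. For each $\xi\in\ell_2(X)$ the series $\sum_n a_n\xi$ converges unconditionally in $\ell_2(X)$ (the hypothesis gives convergence of the subseries for every $M\subseteq\N$), so $\sup_M\|T_M\xi\|<\infty$ and the uniform boundedness principle yields a uniform operator-norm bound. The same unconditional tail control furnishes, for each $\xi$ and each $\eta>0$, a finite $F\subset\N$ with $\|\sum_{n\in S}a_n\xi\|<\eta$ whenever $S\cap F=\emptyset$; convergence $M_k\to M$ in $\mathcal X$ eventually forces $M_k\triangle M\subseteq\N\setminus F$, so $M\mapsto T_M$ is SOT-continuous. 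Now if $M_k\in E_r$ with $M_k\to M$, witnessed by $b_k$ of propagation at most $r$ and $\|T_{M_k}-b_k\|\le\varepsilon/2$, then $\|b_k\|\le C+\varepsilon/2$, so by Banach--Alaoglu $(b_k)$ admits a WOT-convergent subnet $b_{k_i}\to b$. The set $\{c:\propg(c)\le r\}$ is WOT-closed (each matrix entry is a WOT-continuous functional), so $\propg(b)\le r$; passing to the WOT-limit and using WOT lower-semicontinuity of the norm on bounded sets yields $\|T_M-b\|\le\varepsilon/2$, and hence $M\in E_r$.

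Baire category applied to the compact Hausdorff space $\mathcal X=\bigcup_r E_r$ then yields $r_0$ and a basic clopen $U=\{M\in\mathcal X:M\cap F_0=A_0\}$ contained in $E_{r_0}$, for some finite $F_0\subset\N$ and $A_0\subseteq F_0$. For arbitrary $M\in\mathcal X$ set $M':=(M\setminus F_0)\cup A_0\in U$; then $T_{M'}$ is $(\varepsilon/2)$-$r_0$-approximable by some $b'$, and
\[T_M-T_{M'}=\sum_{n\in(M\cap F_0)\setminus A_0}a_n-\sum_{n\in A_0\setminus M}a_n\]
is a sum of at most $|F_0|$ terms from the fixed finite set $\{a_n:n\in F_0\}\subset\cstu(X)$. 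Fixing once and for all finite-propagation approximants $b_n$ with $\|a_n-b_n\|\le\varepsilon/(2|F_0|)$ and $\propg(b_n)\le r_1$ for $n\in F_0$, and forming the analogous signed combination $c$, we get $\propg(c)\le r_1$ and $\|(T_M-T_{M'})-c\|\le\varepsilon/2$, so $b'+c$ is an $\varepsilon$-$\max(r_0,r_1)$-approximant of $T_M$, uniformly in $M$.

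The main technical obstacle is the closedness of $E_r$, which rests on the uniform bound $\sup_M\|T_M\|<\infty$ and the SOT-continuity of $M\mapsto T_M$. Both come from the fact that the hypothesis is genuinely \emph{unconditional} SOT-convergence of $\sum_n a_n$ over every subset $M$, which provides the tail control needed to pass uniform estimates through the supremum; without this one would only have pointwise convergence along a single net and no uniform estimates. Once these two sub-facts are secured, the Baire-plus-correction step is essentially bookkeeping.
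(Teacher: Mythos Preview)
Your proof is correct. The paper does not give its own proof of this lemma but cites it from \cite{BragaFarah2018Trans}, remarking only that it was obtained there ``as an application of the Baire category theorem and diagonalization methods''; your argument---Baire category on $2^{\N}$, closedness of the $E_r$ via WOT-compactness of bounded sets of propagation $\le r$, followed by the finite correction outside the basic clopen---is exactly that scheme, with the WOT-subnet extraction playing the role of the diagonalization.
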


The second ingredient \eqref{lowbound} is a uniform lower bound on certain matrix entries. Given a set $X$, $(\delta_x)_{x\in X}$ denotes the standard orthonormal basis of $\ell_2(X)$ and, given $x,y\in X$, $e_{xy}$ denotes the rank 1 partial isometry sending $\delta_y$ to $\delta_x$. The current proofs of rigidity in the literature all follow a similar path: given a $*$-isomorphism $\Phi\colon\cstu(X)\to \cstu(Y)$, one uses some geometric property of $Y$ in order to ensure an inequality of the form 
\begin{equation}\label{low bound}
\inf_{x\in X}\sup_{y\in Y}\|\Phi(e_{xx})\delta_y\|>0.
\end{equation}
This inequality was first obtained in \cite[Lemma 4.6]{SpakulaWillett2013AdvMath} under the assumption of Yu's property A (see \cite[Definition 2.1]{Yu00}), which is an amenability-like property of metric spaces. 

The inequality in line \eqref{low bound} was then shown to hold under conditions on the absence of certain \emph{ghost operators} in \cite[Section 6]{BragaFarah2018Trans}: an operator $a=[a_{xy}]$ on $\ell_2(X)$ is a \emph{ghost} if $ \lim_{x,y\to\infty} a_{xy}=0$. Compact operators are easily seen to be ghosts, and we regard these as the trivial ghost operators. Property A is equivalent to the statement that all ghost operators are compact (\cite[Theorem 1.3]{RoeWillett14}), i.e., that there are no non-trivial ghosts. In \cite[Theorem 6.2]{BragaFarah2018Trans}, the inequality in line \eqref{low bound} was established under the absence of certain families of non-trivial ghost \emph{projections}, which is much weaker. Prior to this paper, the most general geometric condition that is sufficient to establish the inequality in line \eqref{low bound} also used ghostly ideas, and is due to Li, \v{S}pakula, and Zhang \cite[Theorem A]{LiSpakulaZhang2020}. Nonetheless, there are many examples where non-trivial ghosts exist, and that do not satisfy the Li--\v{S}pakula--Zhang condition.

The reason the condition in line \eqref{low bound} is useful is that it shows the existence of a map $f\colon X\to Y$ so that 
\[\inf_{x\in X}\|\Phi(e_{xx})\delta_{f(x)}\|>0.\]
The situation is symmetric, so that one also gets a map $g:Y\to X$ satisfying the same condition with the roles of $X$ and $Y$ reversed. Repeated use of the equi-approximability lemma (Lemma~\ref{LemmaUnifApprox} above) implies that the maps $f$ and $g$ are both coarse, and in fact mutual coarse inverses. We isolate the key point in the following proposition, the proof of which is contained in the proof of \cite[Theorem 4.1]{SpakulaWillett2013AdvMath} (see also \cite[Theorem 4.12]{BragaFarah2018Trans}).

\begin{proposition}\label{PropOnceMapsFoundWeAreGood}
Let $X$ and $Y$ be \ulf metric spaces and $\Phi\colon\cstu(X)\to\cstu(Y)$ be a $*$-isomorphism. If there are maps $f\colon X\to Y$ and $g\colon Y\to X$ so that $\inf_{x\in X}\|\Phi(e_{xx})\delta_{f(x)}\|>0$ and $\inf_{y\in Y}\|\Phi^{-1}(e_{yy})\delta_{g(y)}\|>0$, then $f$ and $g$ are mutual coarse inverses. \qed
\end{proposition}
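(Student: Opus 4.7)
The plan is to deduce both the coarseness of $f$ (and symmetrically $g$) and the near-identity of the compositions $g\circ f,\, f\circ g$ by two applications of Lemma~\ref{LemmaUnifApprox}, each designed to contradict a uniform lower bound on a single matrix entry of a carefully chosen operator. Write $\delta_1:=\inf_x\|\Phi(e_{xx})\delta_{f(x)}\|>0$ and $\delta_2:=\inf_y\|\Phi^{-1}(e_{yy})\delta_{g(y)}\|>0$. By Lemma~\ref{LemmaIsoImplementedUnitary} the projections $\Phi(e_{xx})=\xi_x\otimes\xi_x^*$ are rank one and pairwise orthogonal, so the unit vectors $\{\xi_x\}_{x\in X}$ form an orthonormal family in $\ell_2(Y)$; since $|\xi_x(f(x))|\ge\delta_1$, Bessel's inequality yields $|f^{-1}(y)|\le\delta_1^{-2}$ for every $y\in Y$, and symmetrically for $g$. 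This bounded-fibers property is what allows me to extract subsequences with distinct images later.

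For the coarseness of $f$ I argue by contrapositive. Given sequences $(x_n),(x_n')$ with $d_X(x_n,x_n')\le r$ and $d_Y(f(x_n),f(x_n'))\to\infty$, the bounded-fibers property together with uniform local finiteness forces neither $(x_n)$ nor $(x_n')$ to repeat a value infinitely often, so after passing to a subsequence both consist of pairwise distinct points and the partial sums $\SOTh\sum_{n\in M}e_{x_nx_n'}$ are partial isometries in $\cstu(X)$ of propagation $\le r$. Set $u_n:=\Phi(e_{x_nx_n'})$; this is a rank-one partial isometry of the form $\alpha_n\xi_n\otimes{\xi_n'}^*$ with $|\alpha_n|=1$, so its $(f(x_n),f(x_n'))$-matrix entry equals $\alpha_n\xi_n(f(x_n))\overline{\xi_n'(f(x_n'))}$, of modulus at least $\delta_1^2$. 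Now Lemma~\ref{LemmaUnifApprox} applied to $(u_n)$ with $\eps<\delta_1^2$ produces $s>0$ such that, taking $M=\{n\}$, each $u_n$ is $\eps$-$s$-approximable by some $b_n$; whenever $d_Y(f(x_n),f(x_n'))>s$ the $(f(x_n),f(x_n'))$-entry of $b_n$ vanishes, which forces $\|u_n-b_n\|\ge\delta_1^2>\eps$, a contradiction. Applying the same argument to $\Phi^{-1}$ yields the coarseness of $g$.

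For $\sup_x d_X(x,g(f(x)))<\infty$ I again argue by contrapositive: assume $d_X(x_n,g(f(x_n)))\to\infty$, pass to a subsequence with $(f(x_n))$ pairwise distinct (possible by the bounded-fibers property of $f$), and set $p_n:=\Phi^{-1}(e_{f(x_n)f(x_n)})=\eta_n\otimes\eta_n^*$. The crucial double lower bound is $|\eta_n(g(f(x_n)))|\ge\delta_2$ (by hypothesis) together with $|\eta_n(x_n)|\ge\delta_1$; the second inequality is extracted from the observation that $\Phi(e_{x_nx_n})\cdot e_{f(x_n)f(x_n)}$ has norm $|\xi_n(f(x_n))|\ge\delta_1$, which transports via the isometric $\Phi^{-1}$ to $\|e_{x_nx_n}p_n\|=|\eta_n(x_n)|\ge\delta_1$. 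Consequently the $(x_n,g(f(x_n)))$-matrix entry of $p_n$ has modulus at least $\delta_1\delta_2$. The sequence $(p_n)$ satisfies the hypothesis of Lemma~\ref{LemmaUnifApprox} because $\SOTh\sum_{n\in M}p_n=\Phi^{-1}\bigl(\SOTh\sum_{n\in M}e_{f(x_n)f(x_n)}\bigr)$ and the inner sum is a propagation-zero projection in $\cstu(Y)$ thanks to the distinctness of the $f(x_n)$. Choosing $\eps<\delta_1\delta_2$, the same matrix-entry/propagation contradiction closes this step, and the argument for $\sup_y d_Y(y,f(g(y)))<\infty$ is symmetric.

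The main obstacle, beyond arranging the subsequences so that the SOT-sums lie in the appropriate uniform Roe algebra, is locating the companion lower bound $|\eta_n(x_n)|\ge\delta_1$ in the near-identity step: the hypothesis on $\Phi^{-1}$ supplies directly only $|\eta_n(g(f(x_n)))|\ge\delta_2$, and to produce a matrix entry of $p_n$ at a \emph{pair} of far-apart points one must import the complementary bound from the $\Phi$ side by exploiting the elementary inequality $\|\Phi(e_{xx})\cdot e_{f(x)f(x)}\|\ge\delta_1$ and transporting it back across the $\ast$-isomorphism. Once both matrix-entry lower bounds are in hand, Lemma~\ref{LemmaUnifApprox} instantly converts them into the desired metric estimates.
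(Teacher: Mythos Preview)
Your proof is correct and follows essentially the same route as the references \cite[Theorem~4.1]{SpakulaWillett2013AdvMath} and \cite[Theorem~4.12]{BragaFarah2018Trans} that the paper defers to for this proposition: two applications of the equi-approximability Lemma~\ref{LemmaUnifApprox}, first to the sequence $(\Phi(e_{x_nx_n'}))_n$ to force coarseness of $f$, then to $(\Phi^{-1}(e_{f(x_n)f(x_n)}))_n$ to force $g\circ f$ close to the identity, each time contradicting a uniform lower bound on a specified matrix entry. The bounded-fibers observation via Bessel and the transport of the bound $|\eta_n(x_n)|\ge\delta_1$ across $\Phi$ are exactly the auxiliary devices used in those references.
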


The key new idea in the current paper establishes the inequality in line \eqref{low bound} \emph{unconditionally}. This is done by combining the equi-approximability lemma (Lemma~\ref{LemmaUnifApprox}) with a quantitative result on the approximate convexity of the range of a finite-dimensional, countably additive vector measure (see Lemma~\ref{lem:approx-infinite}).  We give two proofs of the latter fact: one due to the authors which is an application of the Shapley--Folkman theorem from economics, and the other suggested by a referee, which is based on Lindenstrauss' proof of Lyapunov's theorem.

\subsection{More rigidity}

We conclude this introduction with two other rigidity results. 

For the first, it has already been noted above that the previous partial solutions to Problem~\ref{ProblemRig} rely on conditions on the ideal of ghost operators in $\cstu(X)$. By its very definition, the ``ghost-ness'' of an operator is highly dependent on the choice of the orthonormal basis for $\ell_2(X)$. As such, it was unclear until now what happened to ghosts under $*$-isomorphisms. We solve this problem with the following result.
 
\begin{theorem}\label{TGhostsPreserved}
Every $*$-isomorphism between uniform Roe algebras of uniformly locally finite metric spaces sends ghost operators to ghost operators.
\end{theorem}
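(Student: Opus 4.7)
By Lemma~\ref{LemmaIsoImplementedUnitary}, fix a unitary $u\colon \ell_2(X)\to\ell_2(Y)$ with $\Phi(a)=uau^*$ for all $a\in\cstu(X)$, and set $v_x:=u\delta_x$ and $w_y:=u^*\delta_y$. Since $\langle \Phi(a)\delta_{y'},\delta_y\rangle = \langle aw_{y'},w_y\rangle$, the plan is to show that this inner product vanishes in the limit as $(y,y')$ leaves every set of the form $F\times F$ with $F\subseteq Y$ finite. The strategy combines the unconditional lower bound \eqref{low bound} from Theorem~\ref{T1} with the equi-approximability of Lemma~\ref{LemmaUnifApprox} to show that each $v_x$ and each $w_y$ is approximately supported in a ball of uniformly bounded radius; the vanishing of the matrix entries of $a$ at infinity then forces the inner products above to be small.

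For the localisation step, Theorem~\ref{T1} supplies a coarse equivalence $f\colon X\to Y$ with coarse inverse $g\colon Y\to X$ and a constant $\delta>0$ such that $|v_x(f(x))|\geq\delta$ and $|w_y(g(y))|\geq\delta$ for all $x\in X, y\in Y$. Apply Lemma~\ref{LemmaUnifApprox} to the sequence $(\Phi(e_{xx}))_{x\in X}$: the hypothesis is satisfied because $\SOTh\sum_{x\in M}\Phi(e_{xx}) = \Phi(\sum_{x\in M}e_{xx})\in\cstu(Y)$ for every $M\subseteq X$. Thus for each $\eta>0$ there exists $r>0$ such that every rank-$1$ projection $v_xv_x^*=\Phi(e_{xx})$ is within $\eta$ in operator norm of some $b_x$ of propagation at most $r$. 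Evaluating at $\delta_{f(x)}$, using the identity $v_xv_x^*\delta_{f(x)}=\langle\delta_{f(x)},v_x\rangle v_x$ (a scalar multiple of $v_x$ of norm $\geq\delta$) together with the fact that $b_x\delta_{f(x)}$ is supported in $B_r(f(x))$, produces a decomposition $v_x=v_x^{\mathrm{loc}}+v_x^{\mathrm{err}}$ with $v_x^{\mathrm{loc}}$ supported in $B_r(f(x))$ and $\|v_x^{\mathrm{err}}\|\leq\eta/\delta$. The analogous argument applied to $\Phi^{-1}$ yields $w_y=w_y^{\mathrm{loc}}+w_y^{\mathrm{err}}$ with $w_y^{\mathrm{loc}}$ supported in $B_s(g(y))$ for some $s>0$ and $\|w_y^{\mathrm{err}}\|\leq\eta/\delta$.

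To finish, fix a ghost $a\in\cstu(X)$ and $\varepsilon>0$, choose $\eta$ sufficiently small, and expand
\[
\langle aw_{y'},w_y\rangle = \langle aw_{y'}^{\mathrm{loc}},w_y^{\mathrm{loc}}\rangle + O(\|a\|\eta).
\]
The ghost property produces a finite $F\subseteq X$ with $|a_{x'x}|<\eta$ whenever $x\notin F$ or $x'\notin F$, and properness of $g$ (automatic for a coarse equivalence of uniformly locally finite spaces) makes $F':=\{y\in Y:B_s(g(y))\cap F\neq\emptyset\}$ finite. For $y'\notin F'$, every $x\in B_s(g(y'))$ lies outside $F$, and a Cauchy--Schwarz estimate using the uniform bound $\sup_z|B_s(z)|<\infty$ gives $|\langle aw_{y'}^{\mathrm{loc}},w_y^{\mathrm{loc}}\rangle|<\varepsilon/2$; the case $y\notin F'$ is symmetric. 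Hence $|\Phi(a)_{y,y'}|<\varepsilon$ whenever $y\notin F'$ or $y'\notin F'$, which is precisely the ghost condition for $\Phi(a)$. The main obstacle is the second paragraph: converting \emph{norm}-approximation of the rank-$1$ projections $v_xv_x^*$ by finite-propagation operators into genuine \emph{spatial} concentration of the unit vectors $v_x$. This is exactly where the unconditional pointwise lower bound \eqref{low bound} is indispensable, since without a bound on $|v_x(f(x))|$ the approximating $b_x$ need not detect the direction of $v_x$ at all.
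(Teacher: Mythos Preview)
Your approach is correct and takes a genuinely different route from the paper's. The paper argues contrapositively through ideal theory: given a nonghost $a\in\cstu(X)$, one extracts an infinite set $A\subseteq X$ with $\chi_A$ in the ideal generated by $a$; since $\|\Phi(\chi_A)\delta_{f(a)}\|>\delta$ for all $a\in A$ and $f[A]$ is infinite, $\Phi(\chi_A)$ is not a ghost, and as the ghosts form an ideal, $\Phi(a)$ cannot be a ghost either. You instead work directly and spatially: you prove that the vectors $w_y=u^*\delta_y$ are uniformly concentrated in balls $B_s(g(y))$ (the crucial step, combining the pointwise lower bound from Theorem~\ref{T1} with equi-approximability to turn norm approximation of the rank-one projections into genuine localisation of the vectors), and then estimate $\langle aw_{y'},w_y\rangle$ by hand. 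The paper's route is shorter and exploits the ideal structure of ghosts; yours gives explicit quantitative control of the matrix entries of $\Phi(a)$ in terms of those of $a$, the localization radius, and the geometry of $X$.

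One point to tighten: you use the same parameter $\eta$ for both the localization error $\|w_y^{\mathrm{err}}\|\leq\eta/\delta$ and the ghost tolerance $|a_{x'x}|<\eta$. Since the radius $s$ (and therefore $N_s=\sup_z|B_s(z)|$) depends on $\eta$, the Cauchy--Schwarz bound on the main term is of order $\eta\, N_{s(\eta)}$, which need not be small. Decouple the two: first choose the localization parameter to make the $O(\|a\|\eta)$ cross-terms less than $\varepsilon/2$, thereby fixing $s$ and $N_s$; then invoke the ghost property with a \emph{separate}, much smaller tolerance $\eta'<\varepsilon/(2N_s(1+\eta/\delta)^2)$ to control $\langle aw_{y'}^{\mathrm{loc}},w_y^{\mathrm{loc}}\rangle$. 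With that adjustment the argument goes through.
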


For the second result, we look at possibly non-metrizable coarse spaces. Just as topological spaces abstract the small scale structure of metric spaces, \emph{coarse spaces} abstract their large scale structure: see Section~\ref{SectionCoarse} for precise definitions. The definition of uniform Roe algebras extends to coarse spaces naturally, and rigidity of uniform Roe algebras of nonmetrizable coarse spaces has been studied in \cite{BragaFarah2018Trans,BragaFarahVignati2020AnnInstFour}. The proofs of our main results do not immediately extend to coarse spaces, since Lemma~\ref{LemmaUnifApprox} depends heavily on Baire categorical methods: these require coarse spaces to be metrizable (or at least small; see \cite[Definition~4.2]{BragaFarah2018Trans} and \cite[\S 8.5]{Fa:STCstar} for more information on the role of the Baire Category theorem), which translates to a countability condition on the associated coarse structure.

In the earlier work on rigidity, property A plays a key role, typically via the \emph{operator norm localization property} of Chen, Tessera, Wang, and Yu \cite[Section 2]{CTWY08}, which was shown to be equivalent to property A by Sako \cite{Sako14}. Our vector measure approach together with a new lemma inspired by Sako's work implies that the operator norm localization property holds for certain operators regardless of the geometry of the spaces (Lemma~\ref{LemmaSortOfONLForAnySpace}). We are thus able to establish the result below for general coarse spaces (see Section~\ref{SectionCoarse} for the definition of a \emph{coarse embedding}).

\begin{theorem} \label{T1CoarseSpEmb}
Let $(X,\cE)$ and $(Y,\cF)$ be uniformly locally finite coarse spaces, and suppose $(X,\cE)$ is metrizable. If $ \cstu(X)$ and $\cstu(Y)$ are $*$-iso\-mor\-phic, then $Y$ is countable and $X$ coarsely embeds into $Y$.
\end{theorem}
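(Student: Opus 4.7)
The plan is to follow the proof of Theorem~\ref{T1}, substituting Lemma~\ref{LemmaSortOfONLForAnySpace} for the symmetric use of equi-approximability that is unavailable when $(Y,\cF)$ is not known to be metrizable. First, Lemma~\ref{LemmaIsoImplementedUnitary} extends verbatim to uniformly locally finite coarse spaces, because its proof depends only on $\mathcal{K}(\ell_2(X))$ being the minimal nonzero ideal of $\cstu(X)$; this yields a unitary $u\colon\ell_2(X)\to\ell_2(Y)$ with $\Phi(a)=uau^*$, and $u\delta_x$ spans the range of $\Phi(e_{xx})$ for each $x\in X$. Since $(X,\cE)$ is metrizable and uniformly locally finite, $X$ is countable; hence $\ell_2(X)\cong\ell_2(Y)$ is separable, $Y$ is countable, and $(Y,\cF)$ is small in the sense of \cite[Definition~4.2]{BragaFarah2018Trans}, so the Baire-categorical machinery (including Lemma~\ref{LemmaUnifApprox}) applies on $Y$ as well.

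With this in hand, I would run the unconditional argument from the proof of Theorem~\ref{T1}: combining Lemma~\ref{LemmaUnifApprox} on $X$ with the approximate convexity result for vector measures (Lemma~\ref{lem:approx-infinite}) produces $\varepsilon>0$ and a map $f\colon X\to Y$ with
\[
|\langle u\delta_x,\delta_{f(x)}\rangle|=\|\Phi(e_{xx})\delta_{f(x)}\|\geq\varepsilon\quad\text{for every }x\in X.
\]
To show $f$ is coarse, fix a controlled $E\in\cE$; for $(x,x')\in E$ the partial isometry $e_{xx'}\in\cstu(X)$ has $\cE$-propagation contained in $E$, so equi-approximability applied to the controlled family $\{\Phi(e_{xx'}):(x,x')\in E\}\subseteq\cstu(Y)$, combined with the lower bound above, forces $(f\times f)(E)$ into a single controlled entourage of $\cF$ exactly as in the proof of Proposition~\ref{PropOnceMapsFoundWeAreGood}.

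For expansiveness of $f$, suppose for contradiction that some $E\subseteq X\times X$ is not in $\cE$ while $(f\times f)(E)\subseteq F\in\cF$. Pick $(x_n,x_n')_n\in E$ with $d_X(x_n,x_n')\to\infty$ and, after passing to a subsequence with disjoint supports, form $b:=\SOTh\sum_n e_{f(x_n)f(x_n')}\in\cstu(Y)$, which has $\cF$-propagation contained in $F^{\mathrm{op}}$. Setting $a:=\Phi^{-1}(b)\in\cstu(X)$ and using that $a$ is a norm-limit of finite-propagation operators on $X$ while $d_X(x_n,x_n')\to\infty$, we obtain $\langle a\delta_{x_n'},\delta_{x_n}\rangle\to 0$. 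On the other hand,
\[
\langle a\delta_{x_n'},\delta_{x_n}\rangle=\langle bu\delta_{x_n'},u\delta_{x_n}\rangle=\sum_m\langle u\delta_{x_n'},\delta_{f(x_m')}\rangle\overline{\langle u\delta_{x_n},\delta_{f(x_m)}\rangle},
\]
and the diagonal term $m=n$ has modulus at least $\varepsilon^2$. Closing the contradiction requires controlling the off-diagonal cross-terms, which is precisely the role of Lemma~\ref{LemmaSortOfONLForAnySpace}: it provides operator norm localization for the relevant conjugates of finite-propagation operators without any geometric assumption on $(Y,\cF)$, forcing the cross-terms to decay.

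The main obstacle is this final expansiveness step. In Theorem~\ref{T1} one simply runs the preceding construction symmetrically, produces a map $g\colon Y\to X$, and invokes Proposition~\ref{PropOnceMapsFoundWeAreGood}; this shortcut is unavailable here because $\cF$ is not assumed metrizable, so the equi-approximability lemma on $Y$ alone cannot deliver the needed localization. The delicate point will be to verify that the specific operators $b$ and $a=\Phi^{-1}(b)$ arising from the putative counterexample fit into the hypotheses of Lemma~\ref{LemmaSortOfONLForAnySpace}, and that its conclusion is quantitatively strong enough to beat the off-diagonal sum against the fixed lower bound $\varepsilon^2$.
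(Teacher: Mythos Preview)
Your argument has a genuine gap at its foundation: the inference ``$Y$ is countable, hence $(Y,\cF)$ is small'' is false. Smallness in the sense of \cite[Definition~4.2]{BragaFarah2018Trans} is a cofinality condition on the \emph{coarse structure} $\cF$, not on the cardinality of the underlying set; as the paper itself remarks, it ``translates to a countability condition on the associated coarse structure''. The space $(\N,\cE_{\max})$ of Corollary~\ref{T1CoarseSpEmb cor} is a countable set whose coarse structure is not countably generated and is not small. Consequently Lemma~\ref{LemmaUnifApprox} is \emph{not} available on the $Y$ side, and your argument that $f$ is coarse --- which rests entirely on equi-approximating the family $\{\Phi(e_{xx'}):(x,x')\in E\}$ inside $\cstu(Y)$ --- collapses.

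You have also inverted which direction is the obstacle. In the paper's proof every approximation is carried out on the metrizable side $X$. Expansiveness is the \emph{easy} step: it follows from Corollary~\ref{Cor.Expanding}, i.e., from Lemma~\ref{LemmaMeasureURA.Gen} applied to the projections $\Phi^{-1}(e_{yy})$, which live in $\cstu(X)$ where equi-approximability is legitimate. Coarseness is the hard step, and this is where Lemma~\ref{LemmaSortOfONLForAnySpace} actually enters: it is applied on $X$ to the rank-one projections $p=\Psi(e_{yy})$ for $y\in f(X)$, together with their $\gamma$-$s$-approximants in $\cstu(X)$ and $q=e_{g(y)g(y)}$, producing uniformly bounded-diameter sets $C\subseteq X$ with $\|\Psi(e_{yy})\chi_C\|\geq 1-\varepsilon$; this is then fed into Lemma~\ref{LemmaTheMapIsCoarse.GenCoarseSp}(2). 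Your proposed use of Lemma~\ref{LemmaSortOfONLForAnySpace} to bound ``off-diagonal cross-terms'' in the expansion of $\langle bu\delta_{x_n'},u\delta_{x_n}\rangle$ does not match the lemma's hypotheses or conclusion (which concern diameters in $X$, not localization of $u\delta_x$ in $Y$), and is in any case aimed at the direction that needs no new idea.
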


This result provides the first example of countable, coarse spaces without property A, whose uniform Roe algebras are not $*$-isomorphic to the uniform Roe algebra of any uniformly locally finite metric space. Indeed, any coarse space which contains no infinite metric space coarsely must satisfy this. In particular, this holds for $(\N,\cE_{\max})$, where $\cE_{\max}$ is the maximal uniformly locally finite coarse structure on $\N$, i.e., $E\in \cE$ if and only if the cardinality of the vertical and horizontal sections of $E$ are uniformly bounded. 

\begin{corollary} \label{T1CoarseSpEmb cor}
Let $\cE_{\max}$ be the maximal uniformly locally finite coarse structure on $\N$. Then $\cstu(\N,\cE_{\max})$ is not $*$-isomorphic to the uniform Roe algebra of any uniformly locally finite metric space.\qed
\end{corollary}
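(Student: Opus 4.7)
The plan is to derive a contradiction from Theorem~\ref{T1CoarseSpEmb}. Suppose for contradiction that there is a uniformly locally finite metric space $Z$ with $\cstu(Z) \cong \cstu(\N,\cE_{\max})$. Since $\cK(\ell_2(\N)) \subseteq \cstu(\N,\cE_{\max})$, the latter algebra is infinite-dimensional, so $Z$ must be infinite. Applying Theorem~\ref{T1CoarseSpEmb} with $(X,\cE) = Z$ and $(Y,\cF) = (\N,\cE_{\max})$, there is a coarse embedding $f \colon Z \to (\N,\cE_{\max})$. It then suffices to show that no infinite uniformly locally finite metric space coarsely embeds into $(\N,\cE_{\max})$.

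Applying the effective properness of $f$ to the diagonal $\Delta_\N \in \cE_{\max}$ shows that $\{(x,y) \in Z\times Z : f(x)=f(y)\}$ is a metric entourage of $Z$, so the fibers of $f$ have uniformly bounded $d_Z$-diameter. By uniform local finiteness of $Z$ these fibers are therefore uniformly finite, and in particular $f(Z)$ is infinite. Because $Z$ is infinite and uniformly locally finite it has infinite diameter, so I will choose inductively pairs $(z_n,w_n) \in Z \times Z$ with $d_Z(z_n,w_n) \geq n$ in such a way that all of the numbers $f(z_1),f(w_1),f(z_2),f(w_2),\ldots$ are pairwise distinct. Such a choice is possible because at step $n$ only finitely many values of $f$ have been used and the preimage under $f$ of any finite set is finite.

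Now define
\[ F = \{(f(z_n),f(w_n)) : n \in \N\} \cup \{(f(w_n),f(z_n)) : n \in \N\} \cup \Delta_\N. \]
By the distinctness arrangement, every horizontal and vertical section of $F$ has cardinality at most $2$, so $F \in \cE_{\max}$. On the other hand $(f\times f)^{-1}(F)$ contains $(z_n,w_n)$ for every $n$, and $d_Z(z_n,w_n) \to \infty$, so $(f\times f)^{-1}(F)$ is not a metric entourage of $Z$. This contradicts the effective properness of $f$, completing the argument. The main obstacle is the inductive selection of pairs with pairwise distinct $f$-images; Theorem~\ref{T1CoarseSpEmb} does all the heavy lifting, and once the pairs are in hand the explicit construction of $F$ and the ensuing contradiction are immediate.
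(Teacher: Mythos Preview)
Your proof is correct and follows the same route as the paper: invoke Theorem~\ref{T1CoarseSpEmb} to get a coarse embedding of the metric space into $(\N,\cE_{\max})$, and then argue that no infinite uniformly locally finite metric space can coarsely embed there. The paper records the corollary with a bare \qed, leaving the non-embeddability of infinite metric spaces into $(\N,\cE_{\max})$ to the reader; you have supplied exactly those details (one tiny point: in your induction you should also note that $f(z_n)\neq f(w_n)$, which is automatic once $d_Z(z_n,w_n)$ exceeds the uniform fiber diameter you already established).
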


\section{Estimating the distance between the range of a vector measure and its convex hull} \label{SectionVectorMeasures}
 
In this section, we prove a quantitative estimate on the distance between the range of a finite-dimensional vector measure on $\cP(\N)$ --- the power set of $\N$ --- and its convex hull which will be crucial in what follows.  

A \emph{vector measure} is a function $\mu$ from a $\sigma$-algebra $\Sigma$ of sets into a Banach space which is countably additive, i.e., if $(A_n)_{n\in\bbN} $ is a sequence of disjoint sets in $\Sigma$, then $\mu(\bigcup_{n} A_n)=\sum_{n} \mu(A_n)$, where the sum converges in norm. 

The next lemma is the main result of this section.  For the statement, let us note that the norm on $\mathbb{R}^m$ is arbitrary, and that $\conv(S)$ denotes the convex hull of a subset of a vector space.
 
\begin{lemma}\label{lem:approx-infinite}
Let $m\in\N$ and $\mu \colon \cP(\N) \to (\mathbb{R}^m,\| \cdot \|)$ be a vector measure. Then, for all $v\in \conv(\mu[\cP(\N)])$ and $\varepsilon>0$, there exists a finite subset $F \subseteq \N$ such that 
 \begin{equation*}\label{eq:approx-infinite}
 \|\mu(F) - v\| \le \sup\{\|\mu(C)\|\mid C\subseteq \N, |C|\le m\} + \varepsilon.
 \end{equation*}
In particular, $\|\mu(F) -v\| \le m\sup_{n\in\N} \| \mu(\{n\})\| + \varepsilon$.
\end{lemma}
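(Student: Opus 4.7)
The plan is to reduce the infinite problem to a finite-dimensional one via countable additivity, then invoke the Shapley--Folkman theorem, which is tailor-made to bound the error in approximating an arbitrary convex combination of ``include $k$ / exclude $k$'' choices by an actual $\{0,1\}$-choice in ambient dimension $m$.

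First I would apply Carath\'eodory's theorem in $\mathbb{R}^m$ to write $v = \sum_{j=1}^{m+1} \lambda_j \mu(A_j)$ with $\lambda_j \ge 0$, $\sum_j \lambda_j = 1$, and $A_j \subseteq \N$. Countable additivity gives $\mu(A_j) = \sum_{k \in A_j} \mu(\{k\})$ in norm, so, setting $c_k := \sum_{j:\, k \in A_j} \lambda_j \in [0,1]$, one obtains the norm-convergent expansion $v = \sum_{k\in\N} c_k \mu(\{k\})$. Choose $N$ large enough that $v_N := \sum_{k \le N} c_k \mu(\{k\})$ satisfies $\|v - v_N\| < \varepsilon$. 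For each $k \le N$ set $S_k := \{0,\mu(\{k\})\} \subseteq \mathbb{R}^m$, so that the Minkowski sum satisfies $\sum_{k \le N} S_k = \{\mu(F) : F \subseteq \{0,\dots,N\}\}$ and $v_N \in \sum_{k \le N} \conv(S_k) = \conv\bigl(\sum_{k \le N} S_k\bigr)$.

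The Shapley--Folkman theorem in $\mathbb{R}^m$ then yields $x_k \in \conv(S_k)$ with $v_N = \sum_{k \le N} x_k$ and $|I| \le m$, where $I := \{k \le N : x_k \notin S_k\}$. For $k \notin I$ we have $x_k \in \{0,\mu(\{k\})\}$; set $F := \{k \le N : k \notin I,\ x_k = \mu(\{k\})\}$. Then
\[
v_N - \mu(F) = \sum_{k \in I} x_k = \sum_{k \in I} t_k\, \mu(\{k\})
\]
for some $t_k \in [0,1]$. Distributing each $t_k$ as $t_k\cdot 1 + (1-t_k)\cdot 0$ exhibits the right-hand side as a convex combination of vectors of the form $\mu(C)$ with $C \subseteq I$, all of which satisfy $|C| \le m$. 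Hence $\|v_N - \mu(F)\| \le \sup\{\|\mu(C)\| : C \subseteq \N,\ |C| \le m\}$, and the triangle inequality with $\|v - v_N\| < \varepsilon$ yields the main inequality. The ``in particular'' clause is then immediate from $\|\mu(C)\| \le \sum_{k \in C}\|\mu(\{k\})\| \le m \sup_n \|\mu(\{n\})\|$ for any $C$ with $|C|\le m$.

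The only genuinely nontrivial input is recognizing that Shapley--Folkman is the right tool: the ambient dimension $m$ dictates exactly how many coordinates must be ``rounded'', which is precisely the $m$-element cardinality bound in the conclusion. The other ingredients---Carath\'eodory's theorem, countable additivity, and the Minkowski identity $\conv(A+B) = \conv(A)+\conv(B)$---are routine. The one bookkeeping subtlety is to ensure the rounding cost is bounded by a value $\|\mu(C)\|$ for some $C$ of cardinality at most $m$ rather than by the potentially much larger $\sum_{k \in I}\|\mu(\{k\})\|$; the convex-combination observation above delivers this automatically.
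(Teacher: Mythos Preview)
Your proof is correct and follows essentially the same route as the paper's second proof: reduce to finitely many atoms, then apply Shapley--Folkman to the two-point sets $S_k=\{0,\mu(\{k\})\}$, with the observation that the residual $\sum_{k\in I}t_k\mu(\{k\})$ lies in $\conv\{\mu(C):C\subseteq I\}$. The only cosmetic differences are that you invoke Carath\'eodory and truncate the expansion $v=\sum_k c_k\mu(\{k\})$ directly, whereas the paper approximates each $\mu(N_i)$ by $\mu(A_i)$ for a finite $A_i$ and isolates the finite case as a separate lemma; both routes are equivalent. The paper also records an alternative proof, in the spirit of Lindenstrauss' proof of Lyapunov's theorem, using Krein--Milman on the weak-$*$ compact preimage $T^{-1}(\{v\})\cap \ell_\infty(\N)_{1,+}$.
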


We offer two proofs of Lemma~\ref{lem:approx-infinite}. The first one was suggested by the referee, and is a modification of Lindenstrauss' proof of Lyapunov's convexity theorem: see~\cite{Lindenstrauss66}.

\begin{proof}[First proof of Lemma~\ref{lem:approx-infinite}] We will find a (possibly infinite) set $F$ such that $ \|\mu(F) - v\| \le \sup\{\|\mu(C)\|\mid C\subseteq \N, |C|\le m\}$. This suffices since $\mu(F)$ is the supremum of $\mu(G)$, with $G\subseteq F$ finite.   Throughout the proof, and contrary to our usual conventions in this paper, all $\ell_\infty$ and $\ell_1$ spaces are taken over $\mathbb{R}$, not $\mathbb{C}$.  

Identify $A\subseteq \N$ with the characteristic function of $A$, $\chi_A\in \ell_\infty(\N)$, and let $T$ be the linear extension of $\mu$ to $\text{span}\{\chi_A\mid A\subseteq \N\}\subseteq \ell_\infty(\N)$. We claim that $T$ is bounded and weak-$*$-continuous, whence in particular it extends to all of $\ell_\infty(\N)$. It suffices to show that the composition of $T$ with each one of the coordinate projections $\pi_j:\mathbb{R}^m\to\mathbb{R}$ has these properties. Fix $j\leq m$ and define $\mu_j \coloneqq\pi_j\circ \mu$, which we may think of as a function $\mu_j:\mathbb{N}\to \mathbb{R}$. 

Since $X_j:=\{n\mid \mu_j(\{n\})\geq 0\}$ and its complement explicitly provide the Hahn decomposition of $\mu_j$, the latter is absolutely summable.  As $\pi_j \circ T$ is the linear functional on $\ell_\infty(\mathbb{\N})$ given by pairing with $\mu_j\in \ell_1(\N)$, it is weak-$*$ continuous. 

Now, let $B\coloneqq\ell_\infty(\N)_{1,+}$ denote those $g\in \ell_\infty(\N)$ such that $0\leq g\leq 1$, which is a weak-$*$ compact set. Noting that $B$ contains all characteristic functions of subsets of $\N$, $T(B)$ is a convex set that contains $\mu(\mathcal{P}(\N))$. Let $v\in \text{conv}(\mu(\mathcal{P}(\N)))$ be arbitrary. Then $W_0:=T^{-1}(\{v\})\cap B$ is a non-empty, convex, weak-$*$ closed subset of $B$. Hence by the Krein-Milman theorem, $W_0$ has an extreme point, say $g$. 

We claim that the set $\{n\mid 0<g(n)<1\}$ has cardinality at most $m$. Otherwise, for some $\varepsilon>0$ the set $E=\{n\mid \varepsilon <g(n)<1-\varepsilon\}$ has cardinality greater than $m$. The rank-nullity theorem implies that the restriction of $T$ to $\ell_\infty(E)\subseteq \ell_\infty(\N)$ contains an element $h$ of norm one in its kernel. Then $g\pm \varepsilon h$ are distinct points in $W_0$ such that $g$ is their midpoint, contradicting extremity. Therefore $g$ differs from the characteristic function of $F \coloneqq\{n\mid g(n)=1\}$ on at most $m$ points, as required. 
\end{proof}

The second proof of Lemma~\ref{lem:approx-infinite} (our original proof) is an application of the Shapley--Folkman theorem. The insight provided by this theorem was one of the key factors that led to the proof of our main result. 

If $S_1,...,S_n$ are subsets of a vector space, their \emph{Minkowski sum} is
\[
\sum_{i=1}^n S_i\coloneqq \{s_1+\cdots +s_n\mid s_i\in S_i\}.
\]
It is a well-known elementary fact that the convex hull of a Minkowski sum is the Minkowski sum of the convex hulls. Precisely, given subsets $(S_i)_{i=1}^n$ of a vector space, we have 
 \begin{equation}\label{eq:MS-SM}
 \conv \Big( \sum_{i=1}^n S_i \Big) = \sum_{i=1}^n \conv(S_i).
 \end{equation}
The Shapley--Folkman theorem (see \cite[Appendix 2]{starr1969quasi}, or \cite{Zhou93} for a short proof) provides additional quantitative information about the nature of the decomposition in \eqref{eq:MS-SM} when the subsets are drawn from a finite-dimensional vector space. Precisely:

\begin{theorem}[Shapley--Folkman theorem]
\label{ThmShapleyFolkman}
Let $m\in\N$, $(S_i)_{i=1}^n$ be nonempty subsets of $\mathbb R^m$. Then each $v\in\conv ( \sum_{i=1}^n S_i )$ can be written as $v=\sum_{i=1}^nv_i$ where $v_i\in \conv(S_i)$ for all $i\in \{1,\ldots, n\}$, and so that 
\begin{equation*}
|\{i\in \{1,\ldots ,n\}\mid v_i\not\in S_i\}|\leq m. 
\end{equation*}
\end{theorem}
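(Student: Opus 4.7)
The plan is to combine Carathéodory's theorem in $\mathbb{R}^m$ with a basic-feasible-solution argument from linear programming.

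First, since $v\in \conv(\sum_{i=1}^n S_i)\subseteq \mathbb{R}^m$, Carathéodory's theorem applied in $\mathbb{R}^m$ produces scalars $\lambda_1,\dots,\lambda_{m+1}\geq 0$ with $\sum_k \lambda_k=1$ and vectors $s_{i,k}\in S_i$ (for $1\leq i\leq n$ and $1\leq k\leq m+1$) such that
\[
v \;=\; \sum_{k=1}^{m+1}\lambda_k\sum_{i=1}^n s_{i,k}.
\]
This reduces the problem to a finite configuration of $n(m+1)$ points in $\mathbb{R}^m$.

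Next, I would consider the polytope $P$ of vectors $\mu=(\mu_{i,k})_{i,k}$ in $\mathbb{R}^{n(m+1)}_{\geq 0}$ satisfying
(a) $\sum_k \mu_{i,k}=1$ for each $i\in\{1,\dots,n\}$ and
(b) $\sum_{i,k}\mu_{i,k}s_{i,k}=v$.
The choice $\mu_{i,k}=\lambda_k$ shows $P$ is non-empty, and since it lies in the non-negative orthant it contains no affine lines, so it has an extreme point $\bar\mu$. Constraints (a)+(b) comprise $n+m$ linear equalities, and at an extreme point the columns of the $(n+m)\times n(m+1)$ constraint matrix indexed by the support of $\bar\mu$ must be linearly independent; hence $|\supp(\bar\mu)|\leq n+m$.

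Finally, define $v_i\coloneqq \sum_k \bar\mu_{i,k} s_{i,k}$. Constraint (a) gives $v_i\in \conv(S_i)$, and (b) gives $\sum_i v_i=v$. Each of the $n$ rows of $\bar\mu$ must contain at least one non-zero entry (again by (a)), accounting for at least $n$ support points; therefore at most $m$ rows contain more than one non-zero entry. For any row with exactly one non-zero entry, necessarily $\bar\mu_{i,k}=1$ for some $k$, so $v_i=s_{i,k}\in S_i$. This yields $|\{i : v_i\notin S_i\}|\leq m$, as required.

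The proof is essentially classical, so I do not anticipate any serious obstacle. The only point that requires care is the existence of an extreme point of $P$ and the bound on its support, both of which are standard consequences of the theory of basic feasible solutions for linear programs with a bounded feasible direction in the non-negative orthant.
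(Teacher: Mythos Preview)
Your argument is correct. The Carathéodory reduction, the extreme-point/basic-feasible-solution bound $|\supp(\bar\mu)|\le n+m$, and the pigeonhole count on the rows all go through as stated; the polytope $P$ is in fact bounded (each coordinate lies in $[0,1]$ by constraint~(a)), so the existence of an extreme point is immediate.

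There is nothing to compare against here: the paper does not prove the Shapley--Folkman theorem but simply quotes it, citing Starr's appendix and Zhou's short proof. Your extreme-point argument is essentially the classical one (and is close in spirit to Zhou's proof), so nothing is missing.
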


We now use the Shapley--Folkman theorem to prove Lemma~\ref{lem:approx-infinite} for finite sets.

 \begin{lemma}\label{lem:approx-finite}
Let $m\in\N$, $X$ be a finite set, and $\mu \colon \cP(X) \to (\mathbb{R}^m,\| \cdot \|)$ be a vector measure. Then, for all $v\in \conv(\mu[\cP(X)])$, there exists a subset $F \subseteq X$ such that 
 \begin{equation*}\label{eq:approx-finite}
 \|\mu(F) - v\| \le \max\{\|\mu(C)\|\mid C\subseteq X, |C|\le m\}.
 \end{equation*}
In particular, $\|\mu(F) - v\| \le m\max_{x\in X} \| \mu(\{x\})\|$.
 \end{lemma}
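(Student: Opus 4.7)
The plan is to apply the Shapley--Folkman theorem (Theorem~\ref{ThmShapleyFolkman}) to the family of two-element sets $S_x \coloneqq \{0,\mu(\{x\})\}\subseteq \mathbb{R}^m$, indexed by $x\in X$. By finite additivity of $\mu$, the Minkowski sum satisfies
\[
\sum_{x\in X} S_x \;=\; \Bigl\{\sum_{x\in F}\mu(\{x\}) : F\subseteq X\Bigr\} \;=\; \mu[\cP(X)],
\]
and the identity~\eqref{eq:MS-SM} then yields $\conv(\mu[\cP(X)]) = \sum_{x\in X}\conv(S_x)$, where $\conv(S_x) = \{t\mu(\{x\}) : t\in[0,1]\}$.

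Given $v\in \conv(\mu[\cP(X)])$, Theorem~\ref{ThmShapleyFolkman} produces a decomposition $v = \sum_{x\in X} v_x$ with $v_x\in \conv(S_x)$ and with $C \coloneqq \{x\in X : v_x\notin S_x\}$ of cardinality at most $m$. For $x\notin C$ we have $v_x\in \{0,\mu(\{x\})\}$, so setting $F \coloneqq \{x\in X\setminus C : v_x = \mu(\{x\})\}$ and using finite additivity gives $\sum_{x\notin C}v_x = \mu(F)$, and hence
\[
\mu(F) - v \;=\; -\sum_{x\in C} v_x.
\]

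To control the right-hand side, apply~\eqref{eq:MS-SM} once more to the subfamily $(S_x)_{x\in C}$: since $v_x\in\conv(S_x)$ for each $x\in C$, the sum $\sum_{x\in C}v_x$ lies in $\conv\bigl(\sum_{x\in C}S_x\bigr) = \conv(\mu[\cP(C)])$. Any element of a convex hull is bounded in norm by the supremum of the norms of the generators, so using $|C|\le m$ we obtain
\[
\|\mu(F)-v\| \;=\; \Bigl\|\sum_{x\in C}v_x\Bigr\| \;\le\; \max\{\|\mu(C')\| : C'\subseteq C\} \;\le\; \max\{\|\mu(C')\| : C'\subseteq X,\; |C'|\le m\},
\]
which is the desired inequality. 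The ``in particular'' clause then follows from the triangle inequality: $\|\mu(C')\|\le |C'|\max_{x\in X}\|\mu(\{x\})\|\le m\max_{x\in X}\|\mu(\{x\})\|$ whenever $|C'|\le m$.

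The proof is essentially forced once the correct family $(S_x)_{x\in X}$ is identified; there is no serious technical obstacle, only the conceptual step of choosing each $S_x$ to be the two-point set $\{0,\mu(\{x\})\}$. This choice simultaneously recovers $\mu[\cP(X)]$ as the Minkowski sum and makes the Shapley--Folkman conclusion potent: it forces $v_x\in\{0,\mu(\{x\})\}$ for all but at most $m$ indices, thereby localizing the error term $\mu(F)-v$ to a sum over a set of cardinality $\le m$, to which the bound of the lemma then applies.
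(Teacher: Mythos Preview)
Your proof is correct and follows essentially the same approach as the paper: apply Shapley--Folkman to the two-point sets $S_x=\{0,\mu(\{x\})\}$, define $F$ from the indices where $v_x\in S_x$, and bound the error $\sum_{x\in C}v_x$ by noting it lies in $\conv(\mu[\cP(C)])$. The only cosmetic difference is that the paper first discards points with $\mu(\{x\})=0$ to make a clean three-way partition $S\sqcup C\sqcup V=X$, whereas your definition of $F$ handles that case automatically.
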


\begin{proof} 
By shrinking $X$, we may assume that $\mu(\{x\})\neq 0$ for all $x\in X$. For each $x\in X$, define $S_x \coloneqq \{0, \mu(\{x\}) \}\subseteq \mathbb{R}^m$. As $0\in S_x$, we have 
 \begin{equation*}
 \mu [\cP(X)] = \sum_{x\in X} S_x.
 \end{equation*}
Hence, if $v\in \conv(\mu [\cP(X)])$, then it follows from \eqref{eq:MS-SM} that there are $v_x\in \conv(S_x)$, for $x\in X$, such that $v = \sum_{x\in X} v_x$. By the Shapley--Folkman theorem, we may assume that the set $ C \coloneqq \{ x\in X \mid v_x \notin S_x\}$ has cardinality at most $m$. From \eqref{eq:MS-SM}, $\sum_{x\in C}\conv(S_x)$ is equal to the convex hull of the set $\sum_{x\in C}S_x=\{\mu(D)\mid D\subseteq C\}$. Therefore 
 \begin{equation*}
 \left\|\sum_{x\in C} v_x\right\|\le \max_{D\subseteq C} \|\mu(D)\|\le
 \max\{\|\mu(D)\|\mid D\subseteq X, |D|\le m\}. 
 \end{equation*}
 Define 
 \begin{equation*}
 S \coloneqq \{ x\in X \mid v_x = \mu(\{x\}) \}\ \text{ and }\ V \coloneqq \{ x\in X \mid v_x = 0\}.
 \end{equation*}
Note that $S \sqcup C\sqcup V = X$. Hence, 
 \begin{align*}
 \| \mu(S) - v \| & = \Bigg\| \mu(S) - \sum_{x\in X} v_x \Bigg\| \\
 &\leq \Bigg\| \mu(S)- \sum_{x \in S} v_x\Bigg\|+\Bigg\|\sum_{x \in C} v_x\Bigg\| + \Bigg\|\sum_{ x \in V} v_x\Bigg\|\\
 & \le\max\{\|\mu(D)\|\mid D\subseteq X, |D|\le m\}.
 \end{align*} 
 To conclude, it remains to note that $\|\mu(D)\|\leq |D|\max_{x\in D} \|\mu(\{x\})\|$ for all $D\subseteq X$. 
\end{proof}
 
Our second proof of Lemma~\ref{lem:approx-infinite} now follows by a simple approximation argument.

\begin{proof}[The second proof of Lemma~\ref{lem:approx-infinite}]
If $\sup_{n\in\N} \| \mu(\{n\}) \|\in \{0, \infty\}$, the result is trivial, so assume that $ \sup_{n\in\N} \| \mu(\{n\}) \| \in (0,\infty)$. Let $\varepsilon > 0$ and $v\in \conv(\mu[\cP(\N)])$. Then $ v = \sum_{i=1}^k \lambda_i \mu (N_i)$ for some $N_1,\dots, N_k \subseteq \N$ and $\lambda_1,\dots,\lambda_k \ge 0$ such that $\sum_{i=1}^k \lambda_i=1$. Pick finite subsets $A_1, \dots, A_k\subseteq \N$ so that $\| \mu(N_i) - \mu(A_i)\| < \varepsilon$ for all $1\leq i \le k$. Let $A \coloneqq \bigcup_{i=1}^k A_i$ and $\mu_A$ be the restriction of $\mu$ to $\cP(A)$. So, $A$ is finite and $v_A \coloneqq \sum_{i=1}^k \lambda_i \mu(A_i)$ belongs to $\conv(\mu_A[\cP(A)])$. By Lemma~\ref{lem:approx-finite}, there exists a (finite) subset $F \subseteq A $ such that 
\begin{align*}
\| \mu_A(F) - v_A\| 
&\leq \max\{\|\mu_A(C)\|\mid C\subseteq A, |C|\le m\}\\
&\leq \sup\{\|\mu(C)\|\mid C\subseteq \N, |C|\le m\}.
\end{align*}
Since $\mu(F) = \mu_A (F)$, we have that
 \begin{align*}
 \| \mu(F) - v \| & \leq \| \mu_A (F) - v_A \| + \| v_A - v \|\\
 & \le \sup\{\|\mu(C)\|\mid C\subseteq \N, |C|\le m\} + \sum_{i=1}^k \lambda_i \| \mu (A_i) - \mu(N_i)\|\\
 & \le \sup\{\|\mu(C)\|\mid C\subseteq \N, |C|\le m\} + \varepsilon, 
 \end{align*}
and the statement is proved. 
\end{proof}

\begin{remark}\label{RemarkVecMeas}
The celebrated Lyapunov convexity theorem \cite{Liapounoff40} states that the range of a finite-dimensional atomless vector measure is closed and convex. 
Our measures of interest are atomic, however, and the ranges of such measures are not necessarily convex. 
On the other hand, a theorem of Elton--Hill (see \cite[Theorem~1.2]{EltonHill87}) quantifies the distance between the range of a finite-dimensional vector measure and its convex hull in terms of the size of the atoms of the one-dimensional coordinate measures. In a less elementary and self-contained way, Lemma~\ref{lem:approx-infinite} can also be obtained as an application of the Elton--Hill theorem. 
\end{remark}
 
\section{Rigidity of uniform Roe algebras}\label{SectionMainTheorem}
 
This section contains the proofs of Theorems~\ref{T1} and~\ref{TGhostsPreserved}. The former could also be obtained as a corollary of Theorem~\ref{T1.Stable}. However, for expository reasons we chose to present the proof of Theorem~\ref{T1} first. 
 

 Lemma~\ref{LemmaMeasureURA.Gen} below is our main technical tool, and is of independent interest. The following simple observation about idempotents was inspired by the referee report; it is used near the end of the proof of Lemma~\ref{LemmaMeasureURA.Gen}, and considerably simplifies our original argument.

\begin{lemma} \label{LemmaReferee} 
Let $E$ be a Banach space, $p\in\mathcal \cB(E)$ be an idempotent bounded operator, and $v\in E$ be a vector. Let $\delta>0$. If $\norm{pv-\frac{1}{2}v}<\delta$, then $\norm{v}<2\|2p-1\|\delta$.

If moreover $E$ is a Hilbert space and $p$ is a (self-adjoint) projection, then $\|v\|<2\delta$.
\end{lemma}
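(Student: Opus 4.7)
The key algebraic observation is that for any idempotent $p$, the operator $s := 2p-1$ is an involution: $s^2 = 4p^2 - 4p + 1 = 4p - 4p + 1 = 1$. In particular $s$ is invertible with $s^{-1} = s$. So my first move is to rewrite the hypothesis in terms of $s$. Since $pv - \tfrac{1}{2} v = \tfrac{1}{2}(2p-1)v = \tfrac{1}{2} s v$, the assumption $\|pv - \tfrac{1}{2}v\| < \delta$ is the same as $\|sv\| < 2\delta$.

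Next I will recover $v$ from $sv$ using $s^2 = 1$: namely $v = s(sv)$, so
\[
\|v\| = \|s(sv)\| \le \|s\| \cdot \|sv\| < \|2p-1\| \cdot 2\delta,
\]
which is the first claim.

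For the Hilbert space addendum I just need to observe that when $p$ is an orthogonal projection, $2p-1$ is a self-adjoint involution, hence a unitary (symmetry). Therefore $\|2p-1\| = 1$ and the general bound specializes to $\|v\| < 2\delta$.

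There is no real obstacle here: the content is the identity $(2p-1)^2 = 1$, after which the estimate is one line. The only thing to be careful about is that the strict inequality is preserved at each step, which it is.
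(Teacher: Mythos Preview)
Your proof is correct and essentially identical to the paper's: both rest on the observation that $(2p-1)^2=1$, then bound $\|v\|$ by one application of the operator norm inequality, and specialize to the Hilbert space case by noting that $2p-1$ is unitary. The only cosmetic difference is that the paper introduces $w=\tfrac12 v$ and writes $\|w\|=\|a^2w\|\le\|a\|\|aw\|$, whereas you equivalently rewrite the hypothesis as $\|sv\|<2\delta$ first.
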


\begin{proof}
Define $w\coloneqq \frac{1}{2}v\in E$, and $a\coloneqq 2p-1\in \cB(E)$.  Then $a^2=1$, whence
\[
\|w\|=\|a^2w\|\leq \|a\|\|aw\|=\|a\|\|2pw-w\|=\|a\|\|pv-w\|<\|a\|\delta,
\]
giving the general result.  In the Hilbert space case, we note that $a$ is unitary, so in particular $\|a\|=1$.
\end{proof}


 
\begin{lemma}\label{LemmaMeasureURA.Gen}
Let $\varepsilon,r> 0$ and $X$ be a \ulf metric space, and define $N_r\coloneqq \sup_{x\in X}|B_r(x)|$. Let $(p_n)_{n\in \N}$ be a sequence of projections in $\mathcal{B}(\ell_2(X))$ such that:
\begin{enumerate}[(1)]
\item \label{LemmaMeasureURA.Gen1} $\SOTh\sum_{n\in A}p_n$ is $\varepsilon$-$r$-approximable for all $A\subseteq \N$, and 
\item \label{LemmaMeasureURA.Gen2} $\SOTh\sum_{n\in\N}p_n=1_{\ell_2(X)}$.
\end{enumerate}
For each $x\in X$ and $\delta>0$, define $M(x,\delta) \coloneqq \{n\in\N\mid \|p_n\delta_x\|\geq\delta\}$. Then, if $\delta \leq \varepsilon/(2N_r)$, we have
\[ 
\inf_{x\in X} \Bigg\|\sum_{n\in M(x,\delta)}p_n\delta_x\Bigg\|\geq 1-4\varepsilon.
\]
\end{lemma}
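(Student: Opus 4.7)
The plan is to reduce the claim to a finite-dimensional vector-measure problem handled by Lemma~\ref{lem:approx-infinite}, and then conclude via the idempotent estimate of Lemma~\ref{LemmaReferee}. The first thing to observe is that the $p_n$ must be pairwise orthogonal: since each $p_n$ is a self-adjoint projection and $\SOTh\sum_n p_n = 1$, one has $\sum_n\|p_n\xi\|^2=\|\xi\|^2$ for every $\xi\in \ell_2(X)$, and any nonzero product $p_ip_j$ with $i\ne j$ would let one produce $\xi=p_j\eta$ violating this identity. Writing $\N':=\N\setminus M(x,\delta)$ and $w:=\sum_{n\in\N'}p_n\delta_x$, Pythagoras gives $1 = \bigl\|\sum_{n\in M(x,\delta)} p_n \delta_x\bigr\|^2 + \|w\|^2$, so it will suffice to show $\|w\|<4\varepsilon$.

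I would next let $P$ be the orthogonal projection of $\ell_2(X)$ onto $\ell_2(B_r(x))$. Because any operator of propagation at most $r$ sends $\delta_x$ into $\ell_2(B_r(x))$, the $\varepsilon$-$r$-approximability hypothesis~(1) forces $\bigl\|(I-P)\sum_{n\in A} p_n\delta_x\bigr\|\le \varepsilon$ for every $A\subseteq\N$. Setting $\mu(A) := P\sum_{n\in A} p_n\delta_x$ produces a countably additive vector measure $\mu\colon\cP(\N')\to \ell_2(B_r(x))\cong \mathbb R^{2N_r}$ whose atoms satisfy $\|\mu(\{n\})\|\le \|p_n\delta_x\|<\delta$ for $n\in\N'$. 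Applying Lemma~\ref{lem:approx-infinite} to the target point $v:=\tfrac12\mu(\N')\in \conv(\mu[\cP(\N')])$ then yields, for each $\varepsilon'>0$, a finite $F\subseteq \N'$ with $\|\mu(F)-v\|\le 2N_r\delta+\varepsilon'\le \varepsilon+\varepsilon'$, using the hypothesis $\delta\le \varepsilon/(2N_r)$.

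To finish, pairwise orthogonality makes $p := \sum_{n\in F} p_n$ a self-adjoint projection with $pw=\sum_{n\in F}p_n\delta_x$, so $P(pw-\tfrac12 w)=\mu(F)-v$. Bounding the $(I-P)$-component by $\|(I-P)\sum_F p_n\delta_x\|+\tfrac12\|(I-P)\sum_{\N'}p_n\delta_x\|\le\tfrac32\varepsilon$ and combining the two components via the Hilbert-space Pythagorean identity gives
\[
 \|pw-\tfrac12 w\|^2\le (\varepsilon+\varepsilon')^2+\tfrac94\varepsilon^2.
\]
Lemma~\ref{LemmaReferee} then forces $\|w\|\le 2\sqrt{(\varepsilon+\varepsilon')^2+\tfrac94\varepsilon^2}$; letting $\varepsilon'\to 0$ gives $\|w\|\le \sqrt{13}\,\varepsilon$, which yields the conclusion together with the Pythagorean identity of the first paragraph (using $\sqrt{1-\|w\|^2}\ge 1-\|w\|\ge 1-\sqrt{13}\,\varepsilon>1-4\varepsilon$). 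The main subtlety lies in this last step: a triangle-inequality combination of the $P$ and $(I-P)$ components of $pw-\tfrac12 w$ would give only $\|w\|\le 5\varepsilon$, which is too weak, so combining the two orthogonal components Pythagoreanly is essential for reaching the advertised constant $4$.
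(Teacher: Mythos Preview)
Your proof is correct and follows the same strategy as the paper's: project onto $\ell_2(B_r(x))$, apply Lemma~\ref{lem:approx-infinite} to the resulting vector measure at the midpoint $\tfrac12\mu(\N')$, control the complementary piece via $\varepsilon$-$r$-approximability, and finish with Lemma~\ref{LemmaReferee}.

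The only difference is bookkeeping in the final estimate. You bound the $(I-P)$-part of $pw-\tfrac12 w$ by $\tfrac32\varepsilon$ via a direct triangle inequality on the two terms, and then need the Pythagorean combination to reach the constant~$4$. The paper instead rewrites
\[
p_F-\tfrac12 p_{\N'}=\tfrac12 p_F-\tfrac12 p_{\N'\setminus F},
\]
observes that this affine combination of two $\varepsilon$-$r$-approximable operators is itself $\varepsilon$-$r$-approximable, and hence bounds the $(I-P)$-part by~$\varepsilon$. With that sharper bound the plain triangle inequality already yields $\|pw-\tfrac12 w\|<2\varepsilon$ and thus $\|w\|<4\varepsilon$. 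So your closing remark that Pythagoras is ``essential'' applies only to your particular splitting, not to the argument as such; the paper reaches the same constant by triangle inequality alone.
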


\begin{proof} 
Fix $\varepsilon,r>0$ and $N_r$ as in the statement. For $A\subseteq \N$ define $p_A \coloneqq \SOTh\sum_{n\in A}p_n$. Notice that the condition $p_\N=1_{\ell_2(X)}$ forces the projections $(p_n)_n$ to be mutually orthogonal, so each $p_A$ is a projection. Fix $\delta\leq \varepsilon/(2N_r)$ and $x\in X$.  Define $M\coloneqq M(x,\delta)$ and $M'\coloneqq\mathbb N\setminus M$.

Now comes the crucial vector measure argument. Let $\pi\colon \ell_2(X)\to \ell_2(B_r(x))$ be the canonical orthogonal projection, which we identify with $\chi_{\brx}$.\footnote{Here we use the following standard notation: for $S\subseteq X$, we let $\chi_{S} \coloneqq \SOTh\sum_{x\in S}e_{xx}$, i.e., $\chi_S$ is the operator on $\ell_2(X)$ that projects onto the coordinates indexed by $S$.} 
We define a vector measure $\mu \colon \cP(M')\to \ell_2(B_r(x))$ by $\mu(A):=\pi p_A \delta_x$
for all $A\subseteq M' $ ($\mu$ is clearly countably additive). As $\| \mu (\{n\})\|= \|\pi p_n\delta_x \|\leq \|p_n\delta_x\|$, it follows from the definition of $M'$ that $\sup_{n\in M'}\| \mu (\{n\})\|<\delta$. Therefore, as $\mu(M')/2$ belongs to the convex hull of the range of $\mu$ and as $\dim_{\er}(\ell_2(B_r(x)))\leq 2N_r$, Lemma~\ref{lem:approx-infinite} gives a subset $A\subseteq M'$ such that 
\begin{equation}\label{eq:d-conv-range}
\Big\|\mu(A) - \frac{\mu(M')}{2}\Big\| < 2N_r\delta\leq \varepsilon
\end{equation}
On the other hand, 
\begin{equation}\label{epsr}
\Big\|\mu(A) - \frac{\mu(M')}{2}\Big\| =\Big\|\pi\Big(p_A-\frac{1}{2}p_{M'}\Big)\delta_x\Big\|=\Big\|\pi\Big(\frac{1}{2}p_A-\frac{1}{2}p_{M'\setminus A}\Big)\delta_x\Big\|.
\end{equation}
As $p_A$ and $p_{M'\setminus A}$ are $\varepsilon$-$r$-approximable, so is their convex combination $\frac{1}{2}p_A-\frac{1}{2}p_{M'\setminus A}$, whence $\|(1-\pi)(\frac{1}{2}p_A-\frac{1}{2}p_{M'\setminus A})\delta_x\|<\varepsilon$.
This last inequality, lines \eqref{eq:d-conv-range} and \eqref{epsr}, and the fact that $p_A=p_Ap_{M'}$ together imply that 
\begin{align*}
\Big\|p_Ap_{M'}\delta_x- \frac12 p_{M'}\delta_x\Big\|  & \leq \Big\|(1-\pi)\Big(\frac{1}{2}p_A-\frac{1}{2}p_{M'\setminus A}\Big)\delta_x\Big\|+\Big\|\mu(A) - \frac{\mu(M')}{2}\Big\| \\ & <2\varepsilon.
\end{align*}
Hence Lemma \ref{LemmaReferee} implies that $\norm{p_{M'}\delta_x}<4\varepsilon$.  Using assumption~\eqref{LemmaMeasureURA.Gen2}, we have that $p_M+p_{M'}=1$, so $\|p_{M}\delta_x\|\geq 1-4\varepsilon$. As $p_M=\sum_{n\in M(x,\delta)}p_n$, we have the desired inequality.
\end{proof}

Lemma~\ref{LemmaUnifApprox} and Lemma~\ref{LemmaMeasureURA.Gen} imply the following corollary.

\begin{corollary}\label{CorollaryMeasureURA}
Let $X$ be a \ulf metric space and let $(p_n)_{n\in \N}$ be a sequence of projections in $\mathcal{B}(\ell_2(X))$ such that 
\begin{enumerate}[(1)]
\item $\SOTh\sum_{n\in A}p_n\in \cstu(X)$ for all $A\subseteq \N$, and 
\item $\SOTh\sum_{n\in\N}p_n=1_{\ell_2(X)}$.
\end{enumerate}
Then, 
\[
\inf_{x\in X}\sup_{n\in\N} \|p_n\delta_x\|>0.
\]
\end{corollary}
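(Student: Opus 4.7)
The plan is to combine Lemma~\ref{LemmaUnifApprox} directly with Lemma~\ref{LemmaMeasureURA.Gen}, extracting a uniform lower bound on $\sup_n\|p_n\delta_x\|$ by a contrapositive argument.

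First I would fix a small $\varepsilon>0$, say $\varepsilon = 1/8$. The hypothesis that $\SOTh\sum_{n\in A}p_n \in \cstu(X)$ for every $A\subseteq\N$ is exactly the input needed to apply the equi-approximability lemma (Lemma~\ref{LemmaUnifApprox}) to the sequence $(p_n)_n$. This produces a single $r>0$ such that $\SOTh\sum_{n\in A}p_n$ is $\varepsilon$-$r$-approximable for \emph{every} $A\subseteq\N$. Crucially, the radius $r$ does not depend on $A$, which is what makes the subsequent argument uniform in $x\in X$.

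Next I would set $N_r\coloneqq \sup_{x\in X}|B_r(x)|$, which is finite by uniform local finiteness, and put $\delta\coloneqq \varepsilon/(2N_r)>0$. Hypotheses~\eqref{LemmaMeasureURA.Gen1} and~\eqref{LemmaMeasureURA.Gen2} of Lemma~\ref{LemmaMeasureURA.Gen} are then satisfied, so the lemma yields
\[
\Bigg\|\sum_{n\in M(x,\delta)}p_n\delta_x\Bigg\|\geq 1-4\varepsilon \;=\; \tfrac12
\]
for every $x\in X$, where $M(x,\delta)=\{n\in\N\mid \|p_n\delta_x\|\geq\delta\}$.

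Finally I would run the contrapositive. If there existed some $x\in X$ with $\sup_{n\in\N}\|p_n\delta_x\|<\delta$, then $M(x,\delta)=\emptyset$, so the sum above would be the zero vector, contradicting the inequality $\geq 1/2$. Hence $\sup_{n\in\N}\|p_n\delta_x\|\geq \delta$ for all $x\in X$, giving $\inf_{x\in X}\sup_{n\in\N}\|p_n\delta_x\|\geq \delta>0$, as desired. There is no real obstacle here; all the work has been done in Lemma~\ref{LemmaMeasureURA.Gen} and Lemma~\ref{LemmaUnifApprox}, and the corollary is essentially a packaging step that fixes the parameters $\varepsilon$ and $r$ once and for all.
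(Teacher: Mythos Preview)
Your proof is correct and follows essentially the same approach as the paper: apply Lemma~\ref{LemmaUnifApprox} to obtain a uniform $r$, then invoke Lemma~\ref{LemmaMeasureURA.Gen} with $\delta=\varepsilon/(2N_r)$ to conclude that $M(x,\delta)\neq\emptyset$ for every $x$. The only cosmetic difference is the choice of constant ($\varepsilon=1/8$ versus the paper's $\varepsilon=1/5$) and that you spell out the contrapositive, whereas the paper states directly that $M(x,\delta)$ is nonempty.
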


\begin{proof}
Let $\varepsilon:=1/5$. Then Lemma~\ref{LemmaUnifApprox} implies there is $r$ such that the projection $\SOTh\sum_{n\in A}p_n$ is $\varepsilon$-$r$-approximable for all $A\subseteq \N$. Lemma~\ref{LemmaMeasureURA.Gen} implies in particular that for $\delta=1/(10N_r)$ and any $x\in X$, $M(x,\delta)$ is non-empty. Hence 
\[
\inf_{x\in X}\sup_{n\in\N} \|p_n\delta_x\|\geq 1/(10N_r). \eqno \qedhere
\]
\end{proof} 

\begin{proof}[Proof of Theorem~\ref{T1}]
Fix a $*$-isomorphism $\Phi\colon\cstu(X)\to \cstu(Y)$. By Lem\-ma~\ref{LemmaIsoImplementedUnitary}, $\Phi$ is strongly continuous, so $(\Phi(e_{xx}))_{x\in X}$ satisfies the conditions on the family $(p_n)_{n\in \N}$ from Corollary~\ref{CorollaryMeasureURA}. Therefore, there are $\delta>0$ and $g\colon Y\to X$ such that 
\[
\|\Phi^{-1}(e_{yy})\delta_{g(y)}\|= \| \Phi(e_{g(y)g(y)}) e_{yy}\|=\|\Phi(e_{g(y)g(y)})\delta_y\|>\delta
\]
for all $y\in Y$. 

Replacing $\delta$ by a smaller positive real if necessary, an argument analogous to the one above applied to $\Phi^{-1}\colon\cstu(Y)\to \cstu(X)$ gives us a map $f\colon X\to Y$ such that 
\[
\|\Phi (e_{xx})\delta_{f(x)}\|>\delta,
\]
for all $x\in X$. By Proposition~\ref{PropOnceMapsFoundWeAreGood}, $f$ is a coarse equivalence.
\end{proof}

In the remainder of this section, we give several related rigidity results that can be established using the techniques developed above.

\subsection{Quasi-local operators}

The first of our additional rigidity results concerns quasi-local operators as in the next definition.

\begin{definition}\label{ql def}
A bounded operator $a$ on $\ell_2(X)$ is \emph{$\varepsilon$-$r$-quasi-local} if whenever $A,B\subseteq X$ satisfy $d(A,B)>r$, we have $\|\chi_Aa\chi_B\|<\varepsilon$, and is \emph{quasi-local} if for all $\varepsilon>0$ there exists $r\geq 0$ such that $a$ is $\varepsilon$-$r$-quasi-local. The collection of all quasi-local operators forms a $\mathrm{C}^*$-algebra, denoted $\csql(X)$. 
\end{definition}

One has $ \cstu(X)\subseteq \csql(X)$, and \v{S}pakula-Zhang \cite[Theorem 3.3]{Spakula:2018aa} (building on techniques from \v{S}pakula-Tikuisis \cite{Spakula:2017aa}) have shown that this inclusion is the identity when $X$ has property A. In general, it is not known whether the inclusion $\cstu(X)\subseteq \csql(X)$ can be strict.

\begin{theorem}\label{ql the}
Let $X$ and $Y$ be uniformly locally finite metric spaces. If $\csql(X)$ and $\csql(Y)$ are $*$-isomorphic, then $X$ and $Y$ are coarsely equivalent. 
\end{theorem}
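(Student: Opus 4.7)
The plan is to replay the entire proof of Theorem~\ref{T1} verbatim, replacing ``$\varepsilon$-$r$-approximable'' by ``$\varepsilon$-$r$-quasi-local'' at every stage. Four ingredients are needed in the quasi-local setting: spatial implementation of $*$-isomorphisms, a Baire-category-style equi-quasi-locality lemma, the adapted vector measure argument, and a quasi-local version of Proposition~\ref{PropOnceMapsFoundWeAreGood}.

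First, since $\mathcal{K}(\ell_2(X))$ is still the unique minimal ideal in $\csql(X)$, the argument of Lemma~\ref{LemmaIsoImplementedUnitary} applies unchanged: any $*$-isomorphism $\Phi\colon\csql(X)\to\csql(Y)$ is implemented by a unitary $u\colon\ell_2(X)\to\ell_2(Y)$, so $\Phi$ is SOT-continuous and rank-preserving. Next, I would prove the following quasi-local analog of Lemma~\ref{LemmaUnifApprox}: if $(a_n)_n$ is a sequence of operators on $\ell_2(X)$ for which $\SOTh\sum_{n\in M}a_n$ converges to an element of $\csql(X)$ for every $M\subseteq\N$, then for every $\varepsilon>0$ there is $r>0$ so that each such sum is $\varepsilon$-$r$-quasi-local. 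The proof is a direct adaptation of the Baire-category/diagonalization argument of \cite[Lemma~4.9]{BragaFarah2018Trans}: the sets $\{M\in\mathcal P(\N): \SOTh\sum_{n\in M}a_n \text{ is } \varepsilon/2\text{-}r\text{-quasi-local}\}$ are closed in the Cantor space and cover $\mathcal P(\N)$, so one has nonempty interior; a standard translation-and-diagonalization upgrades this to uniformity.

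Now I would revisit Lemma~\ref{LemmaMeasureURA.Gen}. The only property of $\varepsilon$-$r$-approximability used in its proof is the estimate $\|(1-\chi_{B_r(x)})a\delta_x\|<\varepsilon$ whenever $a$ belongs to the relevant class. But if $a$ is $\varepsilon$-$r$-quasi-local then $\|\chi_{X\setminus B_r(x)} a \chi_{\{x\}}\|<\varepsilon$, which is exactly this estimate. Hence Lemma~\ref{LemmaMeasureURA.Gen} and Corollary~\ref{CorollaryMeasureURA} hold verbatim with ``$\SOTh\sum_{n\in A} p_n\in\cstu(X)$'' replaced by ``$\SOTh\sum_{n\in A}p_n\in\csql(X)$''. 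Applying the resulting corollary to $(\Phi(e_{xx}))_{x\in X}\subseteq\csql(Y)$ and to $(\Phi^{-1}(e_{yy}))_{y\in Y}\subseteq\csql(X)$ as in the proof of Theorem~\ref{T1} produces $\delta>0$ and maps $f\colon X\to Y$, $g\colon Y\to X$ with $\|\Phi(e_{xx})\delta_{f(x)}\|>\delta$ and $\|\Phi^{-1}(e_{yy})\delta_{g(y)}\|>\delta$ for all $x\in X$, $y\in Y$.

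Finally, I would verify that a quasi-local version of Proposition~\ref{PropOnceMapsFoundWeAreGood} holds: given $x,x'\in X$ with $d_X(x,x')\le s$, apply the equi-quasi-locality lemma of step~2 to a family of the form $(\Phi(v_n))_n$, where the $v_n$ are pairwise orthogonal partial isometries of propagation $\le s$ in $\cstu(X)$. This provides an $r=r(s)$ such that each $\Phi(v_n)$, and in particular $\Phi(e_{x'x})=\Phi(e_{x'x'})\Phi(e_{x'x})\Phi(e_{xx})$, is (say) $\delta^3/4$-$r$-quasi-local. Combining this with the lower bounds $\|\Phi(e_{xx})\delta_{f(x)}\|>\delta$ and $\|\Phi(e_{x'x'})\delta_{f(x')}\|>\delta$ forces $d_Y(f(x),f(x'))\le 2r$, exactly as in \cite[Theorem~4.1]{SpakulaWillett2013AdvMath}; the symmetric argument for $g$, and a similar bookkeeping for $g\circ f$ and $f\circ g$, shows that $f$ and $g$ are mutual coarse inverses. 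The main obstacle is simply checking conscientiously that every appearance of ``bounded propagation'' in the original rigidity machinery can be replaced by the weaker ``$\varepsilon$-$r$-quasi-locality'' without loss; the elementary calculation $\|\chi_{X\setminus B_r(x)}a\chi_{\{x\}}\|<\varepsilon$ is the one identity that makes this substitution go through everywhere.
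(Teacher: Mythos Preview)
Your proposal is correct and follows essentially the same route as the paper's own sketch: spatial implementation, an equi-quasi-locality lemma, the observation that Lemma~\ref{LemmaMeasureURA.Gen} only uses the estimate $\|(1-\chi_{B_r(x)})a\,\delta_x\|<\varepsilon$ (which is immediate from $\varepsilon$-$r$-quasi-locality), and a quasi-local version of Proposition~\ref{PropOnceMapsFoundWeAreGood}. The only cosmetic difference is that for the equi-quasi-locality step the paper points to an adaptation of \cite[Lemma~3.2]{SpakulaWillett2013AdvMath} rather than the Baire-category argument of \cite[Lemma~4.9]{BragaFarah2018Trans}; either approach works here.
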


\begin{proof}
The proof is similar to that of Theorem~\ref{T1}, so we just give a sketch. The first step is to establish a quasi-local version of the equi-approximability lemma (Lemma~\ref{LemmaUnifApprox}). This says: ``if $(a_n)_n$ is a sequence of orthogonal operators on $\ell_2(X)$ so that $\SOTh\sum_{n\in M}a_n$ converges to an element of $\csql(X)$ for all $M\subseteq \N$, then for all $\varepsilon>0$ there is $r\geq 0$ such that for all $M\subseteq \N$, $\SOTh\sum_{n\in M}a_n$ is $\varepsilon$-$r$-quasi-local.'' This quasi-local equi-approximability lemma follows from a slight adaptation of \cite[Lemma 3.2]{SpakulaWillett2013AdvMath}. On the other hand, the proof of Lemma~\ref{LemmaMeasureURA.Gen} goes through verbatim if condition \eqref{LemmaMeasureURA.Gen1} from the statement is replaced with ``$\SOTh\sum_{n\in A}p_n$ is $\varepsilon$-$r$-quasi-local for all $A\subseteq \N$". This quasi-local Lemma~\ref{LemmaMeasureURA.Gen} and the quasi-local equi-approximability lemma imply that Corollary~\ref{CorollaryMeasureURA} holds with ``$\cstu(X)$'' replaced by ``$\csql(X)$''. Finally, the quasi-local equi-approximability lemma is enough to establish the analogue of Proposition~\ref{PropOnceMapsFoundWeAreGood} for an isomorphism $\Phi\colon \csql(X)\to \csql(Y)$ (this is essentially what is done in the original rigidity paper \cite[Theorem 4.1]{SpakulaWillett2013AdvMath}), which completes the proof.
\end{proof}

\subsection{Uniform Roe algebras on more general Banach spaces}

A second elaboration concerns Banach algebras of operators on Banach spaces equipped with an appropriate basis, as developed by the second author \cite{Braga2021JFA}.  This material requires some background on bases of Banach spaces: for the benefit of non-experts, we introduce the terminology we need (see e.g.\ \cite{AlbiacKalton2016} for more detailed background).

\begin{definition}\label{bases}
A sequence $(e_n)_{n\in \N}$ in a Banach space $E$ is a \emph{Schauder basis} if every $e\in E$ can be expressed uniquely as $e=\sum_n \lambda_n e_n$ for some scalars $(\lambda_n)_{n\in \N}$, where the sum converges in norm.

A Schauder basis $(e_n)_{n\in \mathbb{N}}$ is \emph{unconditional} if for every permutation $\pi:\N\to\N$ the sequence $(e_{\pi(n)})_{n\in \mathbb{N}}$ is a Schauder basis. If moreover for every permutation $\pi:\N\to\N$, $(e_{\pi(n)})_{n\in \mathbb{N}}$ is equivalent to $(e_n)_{n\in \mathbb{N}}$, i.e.\ the map $\sum_n\lambda_ne_n\in E\mapsto \sum_n\lambda_ne_{\pi(n)}\in E$ defines an isomorphism, the basis is said to be \emph{symmetric}.
\end{definition}

\begin{definition}\label{Ban Roe}
Let $E$ be a Banach space equipped with a fixed Schauder basis $\mathcal{E}=(e_n)_{n\in \N}$, and let $d_E$ be a fixed \ulf metric on $\N$.  For each $n\in \N$, let $e_n^*$ be the bounded\footnote{For a proof that $e_n^*$ is extends to a well-defined bounded linear functional, see for example \cite[Theorem 1.1.3]{AlbiacKalton2016}.} linear functional on $E$ determined by $e_n^*(e_m)\coloneqq \delta_{nm}$, where the right hand side is the Kronecker $\delta$-function.  

A bounded operator $a$ on $E$ has \emph{propagation at most $r$} if $e_n^*(ae_m)=0$ whenever $d_E(n,m)>r$.  The \emph{uniform Roe algebra} $\mathrm{B}_u(d_E,\cE)$ associated to the triple $(E,\cE,d_E)$ is the norm closure of the finite propagation operators inside the set $\cB(E)$ of all bounded operators on $E$. 
\end{definition}

The reason one takes $\N$ as the set underlying the metric space in Definition \ref{Ban Roe} is that the definition and properties of a Schauder basis require an ordering on the basis.  However, for the rigidity theorem below we assume our bases are symmetric, so this ends up being irrelevant: the reader could just replace $(\N,d_E)$ and $(\N,d_F)$ with any \ulf metric spaces  in the statement of the theorem.

\begin{theorem}\label{Ban rig the}
Let $(E,\cE,d_E)$ and $(F,\cF,d_F)$ be triples as in Definition \ref{Ban Roe} with $\cE$ and $\cF$ symmetric Schauder bases.  Assume that $B_u(d_E,\cE)$ and $B_u(d_F,\cF)$ are isomorphic as Banach algebras.  Then $(\N,d_E)$ and $(\N,d_F)$ are coarsely equivalent.  
\end{theorem}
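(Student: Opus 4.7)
The plan is to mimic the argument for Theorem~\ref{T1}, replacing each Hilbert-space ingredient by its Banach-space analogue made available by the symmetry of the bases $\cE$ and $\cF$. Concretely, I would proceed in four stages: (a) show that a Banach algebra isomorphism $\Phi\colon B_u(d_E,\cE)\to B_u(d_F,\cF)$ is spatially implemented, so $\Phi(a)=uau^{-1}$ for some bounded invertible $u\colon E\to F$; (b) establish a Banach-version of the equi-approximability lemma (Lemma~\ref{LemmaUnifApprox}); (c) run the vector-measure argument behind Lemma~\ref{LemmaMeasureURA.Gen} with the finite-dimensional target obtained by projecting onto a basis-ball via the unconditional projection; and (d) extract maps $f,g$ and conclude coarse equivalence via the Banach analogue of Proposition~\ref{PropOnceMapsFoundWeAreGood}.

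For (a), the finite-rank operators form the socle of $B_u(d_E,\cE)$, a purely algebraic invariant; $\Phi$ therefore restricts to an isomorphism between socles, and a standard argument (as in \cite{Braga2021JFA}) produces the desired $u$. For (b), the proof of Lemma~\ref{LemmaUnifApprox} is a Baire-category and diagonalization argument using only the Banach structure and the norm-closure definition of the algebra, so it carries over once ``SOT" is reinterpreted as the strong operator topology on $\cB(E)$, with convergence of $\SOTh\sum_{n\in M}a_n$ guaranteed by unconditionality of $\cF$. For each $M\subseteq \N$ the projections $p_M\coloneqq \SOTh\sum_{n\in M} \Phi(e_n\otimes e_n^*)$ are therefore $\varepsilon$-$r$-approximable for a common $r=r(\varepsilon)$.

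Now fix $y\in(\N,d_F)$ and set $v\coloneqq e_y$. Let $\pi$ be the basis projection onto $\mathrm{span}\{e_n\mid n\in B_r(y)\}$, whose range has real dimension at most $2N_r$ (where $N_r=\sup_y|B_r(y)|$), and whose norm is bounded by the unconditional constant $K_\cF$ of $\cF$. Define a vector measure $\mu\colon \cP(M')\to \pi(F)$ by $\mu(A)\coloneqq \pi p_A v$, where $M'=\N\setminus M(y,\delta)$ with $M(y,\delta)\coloneqq\{n:\|p_n v\|\geq\delta\}$. As in the proof of Lemma~\ref{LemmaMeasureURA.Gen}, Lemma~\ref{lem:approx-infinite} yields a subset $A\subseteq M'$ with $\|\mu(A)-\tfrac12\mu(M')\|\leq 2N_r\delta\,K_\cF$, while $\varepsilon$-$r$-approximability of $\tfrac12(p_A-p_{M'\setminus A})$ forces $\|(1-\pi)(\tfrac12 p_A-\tfrac12 p_{M'\setminus A})v\|<\varepsilon(1+K_\cF)$. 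Combining these with Lemma~\ref{LemmaReferee} applied to the idempotent $p_A p_{M'}$ gives
\[
\|p_{M'}v\|\;\le\; 2\,\|2p_Ap_{M'}-p_{M'}\|\cdot \Bigl(\tfrac12\varepsilon(1+K_\cF)+N_r\delta K_\cF\Bigr),
\]
and hence $\|p_{M(y,\delta)}v\|\geq 1-O(\varepsilon)$ whenever $\delta$ is small. Choosing $\varepsilon$ small, this yields a Banach analogue of Corollary~\ref{CorollaryMeasureURA}: $\inf_{y}\sup_n\|p_n e_y\|/\|e_y\|>0$, and by symmetry an analogous statement for $\Phi^{-1}$. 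From this one extracts maps $f,g\colon \N\to \N$ as in the proof of Theorem~\ref{T1}.

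The main obstacle I expect is step (a)---controlling the constant $\|2p_A-1\|$ uniformly in $A$, which is the only place where the Hilbert argument is truly painless (since projections give $\|2p-1\|=1$). In our setting, $p_A$ is only an idempotent, so one gets an $A$-independent bound of the form $\|2p_A-1\|\leq \|u\|\,\|u^{-1}\|\,K_\cF$, passing through the identification $p_A=u(\chi_A)u^{-1}$ with $\chi_A\in B_u(d_E,\cE)$ the coordinate projection, whose norm is controlled by the unconditional constant of $\cE$. With this uniform constant absorbed into the $\varepsilon$-tolerances, the remainder of the argument proceeds exactly as in the Hilbert case, and the Banach analogue of Proposition~\ref{PropOnceMapsFoundWeAreGood}---whose proof in \cite{SpakulaWillett2013AdvMath,BragaFarah2018Trans} likewise uses only the equi-approximability lemma---then upgrades $f$ and $g$ to mutual coarse inverses between $(\N,d_E)$ and $(\N,d_F)$.
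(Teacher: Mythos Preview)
Your proposal is correct and follows essentially the same route as the paper's own (sketched) proof: spatial implementation from \cite{Braga2021JFA}, the Banach version of equi-approximability, the vector-measure argument of Lemma~\ref{LemmaMeasureURA.Gen}, and the rigidity criterion of Proposition~\ref{PropOnceMapsFoundWeAreGood}. Your explicit identification of the one non-Hilbert wrinkle---the need for a uniform bound on $\|2p_A-1\|$, obtained via $p_A=u\chi_A u^{-1}$ and the unconditional constant of $\cE$---is exactly what the paper is pointing at when it says the argument ``goes through directly \ldots\ as long as one has uniform bounds on the norms of $\chi_A$''.
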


\begin{proof}
As for Theorem \ref{ql the}, we just sketch the proof.   As in \cite[Proposition 3.1]{Braga2021JFA}, the absolute property of the unconditional basis (see e.g. \cite[Proposition 3.1.3]{AlbiacKalton2016}) implies that $B_u(d_E,\cE)$ contains a copy of $\ell_\infty(\N)$ acting as multiplication operators with respect to the basis $\cE$.    

One then needs an analogue of Lemma \ref{LemmaIsoImplementedUnitary}, i.e.\ that for a Banach algebra isomorphism $\Phi:B_u(d_E,\cE)\to B_u(d_F,\cF)$ there is an isomorphism (not necessarily isometric) $u:E\to F$ such that $\Phi(a)=uau^{-1}$ for all $a\in B_u(d_E,\cE)$.  In particular, this implies that $\Phi$ is rank-preserving and strongly continuous.  This is established in \cite[Lemma 7.5]{Braga2021JFA}.  The second ingredient we need is equi-approximabilty (i.e.\ an analogue of Lemma~\ref{LemmaUnifApprox}), which is established in \cite[Theorem 7.9 and Lemma 7.10]{Braga2021JFA}.  

Having got these ingredients, an analogue of the rigidity criterion in Proposition \ref{PropOnceMapsFoundWeAreGood} goes through with exactly the same proof. It remains to establish an analogue of Lemma \ref{LemmaMeasureURA.Gen}, which also goes through directly with the same proof as long as one has uniform bounds on the norms of $\chi_{A}$ for any $A\subseteq \N$; this last fact is true since an unconditional basis is suppression-unconditional (see e.g. \cite[Proposition 3.1.5]{AlbiacKalton2016}).  
\end{proof}

We note that Theorem \ref{Ban rig the} applies in particular when $E=F=\ell_p(\N)$, the basis is the canonical one, and we equip $\N$ with two different \ulf metrics $d_E$ and $d_F$.  In this case if $p\in [1,\infty)\setminus \{2\}$,  Theorem \ref{Ban rig the} was proved by  Chung and Li (see \cite[Theorem 1.7]{ChungLi2018}) under the stronger assumption  of the algebras being \emph{isometrically} isomorphic as Banach algebras (under this stronger assumption, the authors even conclude that the metric spaces are \emph{bijectively} coarse equivalent).  The method of Chung and Li is quite different: the key point is that for $p\in [1,\infty)\setminus \{2\}$ isometric self-isomorphisms of $\ell_p(\N)$ are given by permutations of the basis, and multiplication of the basis elements by scalars of modulus one: see \cite[Proposition 2.3]{ChungLi2018} for more details.

\subsection{Amenability and groups}

For the next result, we use the notion of amenability for \ulf metric spaces, as introduced by Block and Weinberger \cite[Section 3]{Block:1992qp}.  It extends to \ulf metric spaces the classical group-theoretic notion of amenability. 

\begin{corollary}\label{bij cor}
Let $X$ and $Y$ be \ulf metric spaces, and consider the following statements:
\begin{enumerate}[(1)]
\item\label{as:1} $X$ and $Y$ are coarsely equivalent via a bijective coarse equivalence. 
\item\label{as:2} The \cstar-algebras $\cstu(X)$ and $\cstu(Y)$ are isomorphic. 
\end{enumerate} 
Then \eqref{as:1} implies \eqref{as:2} in general, and \eqref{as:2} implies \eqref{as:1} if $X$ is non-amenable.
\end{corollary}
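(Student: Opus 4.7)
For the implication $(1) \Rightarrow (2)$, I would proceed by direct construction. Given a bijective coarse equivalence $f\colon X\to Y$, set $u_f\colon \ell_2(X)\to \ell_2(Y)$ by $u_f\delta_x \coloneqq \delta_{f(x)}$; this is a unitary because $f$ is a bijection of orthonormal bases. The conjugation $\Ad(u_f)$ sends an $X$-by-$X$ matrix $a=[a_{xx'}]$ to the $Y$-by-$Y$ matrix with entries $a_{f^{-1}(y)f^{-1}(y')}$. Since $f^{-1}$ is a coarse map, a matrix of propagation $r$ on $X$ is sent to a matrix of propagation at most $s=s(r)$ on $Y$. By norm continuity, $\Ad(u_f)$ restricts to a $*$-isomorphism $\cstu(X)\to\cstu(Y)$.

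For the implication $(2) \Rightarrow (1)$ with $X$ non-amenable, the plan has two steps. First, Theorem~\ref{T1} provides a coarse equivalence $f\colon X\to Y$. Since amenability of a \ulf metric space in the sense of Block--Weinberger \cite{Block:1992qp} is a coarse invariant, $Y$ is non-amenable as well. What remains is to upgrade the coarse equivalence $f$ to a bijective one.

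The upgrade follows from a Hall's marriage argument in the spirit of Whyte's theorem on bi-Lipschitz equivalence of non-amenable spaces. One forms a bipartite graph with vertex sets $X$ and $Y$ by joining $x$ and $y$ whenever $d_Y(f(x),y)$ is below a threshold chosen large enough that $f$ is coarsely dense with uniformly bounded preimages from both sides. Uniform local finiteness yields bounded degree; the non-amenability of $X$ and $Y$ supplies the ``deep'' Hall condition used by Whyte; and the resulting perfect matching is a bijection $X\to Y$ at uniformly bounded distance from $f$, hence a bijective coarse equivalence. The main obstacle is verifying the Hall condition from non-amenability in the \ulf metric setting; this is essentially Whyte's original argument (for groups and bi-Lipschitz equivalences) adapted to coarse equivalences of \ulf metric spaces, and it goes through with only cosmetic modifications since one needs only the weaker conclusion that the matching be a bijective coarse equivalence rather than a bi-Lipschitz equivalence.
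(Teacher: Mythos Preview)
Your proposal is correct and follows essentially the same route as the paper: the direction $(1)\Rightarrow(2)$ is the same unitary-conjugation argument, and for $(2)\Rightarrow(1)$ both you and the paper invoke Theorem~\ref{T1} and then upgrade the resulting coarse equivalence to a bijective one via Whyte's Hall-marriage argument for non-amenable spaces (the paper cites \cite[Theorem~5.1]{WhiteWillett20} and \cite[Theorem~4.1]{Whyte99} rather than sketching the argument). One small remark: you do not need to observe that $Y$ is also non-amenable, since the cited upgrading result only requires one of the two spaces to be non-amenable.
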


\begin{proof}
The implication from \eqref{as:1} to \eqref{as:2} is well-known (e.g., \cite[Theorem 8.1]{BragaFarah2018Trans}): if $f \colon X\to Y$ is a bijective coarse equivalence, then one defines a unitary isomorphism $u \colon \ell_2(X)\to \ell_2(Y)$ by the formula $u\delta_x \coloneqq \delta_{f(x)}$ for all $x\in X$, and direct checks show that $u\cstu(X)u^*=\cstu(Y)$. 

For the converse, suppose that $\cstu(X)$ and $\cstu(Y)$ are $*$-isomorphic, whence by Theorem~\ref{T1}, $X$ and $Y$ are coarsely equivalent. If $X$ is not amenable, then $X$ being coarsely equivalent to a \ulf space $Y$ implies that $X$ is bijectively coarsely equivalent to it as shown in \cite[Theorem 5.1]{WhiteWillett20} (the key idea is from \cite[Theorem 4.1]{Whyte99}). 
\end{proof}

If $X$ and $Y$ are countable groups, we can do better. The following result gives Corollary~\ref{group cor}.

\begin{corollary}\label{bij cor 2}
Let $\Gamma$ and $\Lambda$ be countable discrete groups equipped with \ulf metrics that are invariant under left translation\footnote{Any countable group admits such a metric, which is moreover unique up to bijective coarse equivalence (e.g., \cite[Proposition 2.3.3]{Willett09})}. Then the following are equivalent:
\begin{enumerate}[(1)]
\item\label{as:12} $\Gamma$ and $\Lambda$ are coarsely equivalent via a bijective coarse equivalence. 
\item\label{as:22} The \cstar-algebras $\ell_\infty(\Gamma)\rtimes_r\Gamma$ and $\ell_\infty(\Lambda)\rtimes_r\Lambda$ are isomorphic. 
\end{enumerate}
Moreover, if $\Gamma$ and $\Lambda$ are finitely generated and equipped with word metrics, then one can replace ``bijective coarse equivalence'' in \eqref{as:12} with ``bi-Lipschitz bijection''.
\end{corollary}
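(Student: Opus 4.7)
The plan is to reduce everything to Corollary~\ref{bij cor} via the canonical identification $\cstu(\Gamma) \cong \ell_\infty(\Gamma) \rtimes_r \Gamma$ from \cite[Proposition 5.1.3]{BrownOzawa08}, which holds for any countable group $\Gamma$ equipped with a left-invariant \ulf metric (not only for word metrics). This translates statement \eqref{as:22} into the condition $\cstu(\Gamma) \cong \cstu(\Lambda)$ as \cstar-algebras.

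The implication \eqref{as:12} $\Rightarrow$ \eqref{as:22} then follows immediately from the analogous implication in Corollary~\ref{bij cor}: a bijective coarse equivalence $f\colon \Gamma \to \Lambda$ is in particular a coarse equivalence, and one obtains the desired $*$-isomorphism by conjugation with the unitary $u\colon \ell_2(\Gamma) \to \ell_2(\Lambda)$ defined by $u\delta_x := \delta_{f(x)}$.

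For \eqref{as:22} $\Rightarrow$ \eqref{as:12}, Theorem~\ref{T1} applied to a $*$-isomorphism $\cstu(\Gamma) \cong \cstu(\Lambda)$ produces a coarse equivalence between $\Gamma$ and $\Lambda$; the remaining task is to upgrade this to a bijective one. When $\Gamma$ (equivalently $\Lambda$, since amenability is a coarse invariant) is non-amenable, this upgrade is immediate from Corollary~\ref{bij cor}, which invokes \cite[Theorem 5.1]{WhiteWillett20}. The amenable case is the main obstacle, since for general amenable \ulf spaces coarse equivalence need not imply bijective coarse equivalence. Here one exploits the homogeneity of groups---left translations are isometries and act transitively---to run a matching argument. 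Concretely, starting from the unitary $u$ implementing the $*$-iso (Lemma~\ref{LemmaIsoImplementedUnitary}), the matrix $M_{xy} := |\langle u\delta_x,\delta_y\rangle|^2$ is doubly stochastic, and by Lemma~\ref{LemmaMeasureURA.Gen} the mass of each row and column concentrates in a uniformly bounded set around the points $f(x)$ and $g(y)$ produced in the proof of Theorem~\ref{T1}. This provides both the global balance needed for Hall's marriage theorem on a bipartite graph of bounded geometry and the proximity control forcing the resulting bijection to remain at bounded distance from the coarse equivalence, thereby ensuring it is itself a (bijective) coarse equivalence.

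For the ``moreover'' clause, the key observation is that on word metrics any coarse map is automatically Lipschitz: if $d_\Gamma(x,y) \le 1$ implies $d_\Lambda(f(x),f(y)) \le s$, then decomposing an arbitrary pair $x,y \in \Gamma$ along a word-length geodesic $x = z_0, z_1, \ldots, z_n = y$ with $d_\Gamma(z_i, z_{i+1}) = 1$ and applying the triangle inequality yields $d_\Lambda(f(x),f(y)) \le s \cdot d_\Gamma(x,y)$. Consequently every bijective coarse equivalence between finitely generated groups with word metrics is a bi-Lipschitz bijection, and the replacement stated in the corollary is legitimate.
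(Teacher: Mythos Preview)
Your treatment of \eqref{as:12}$\Rightarrow$\eqref{as:22}, the non-amenable half of \eqref{as:22}$\Rightarrow$\eqref{as:12}, and the ``moreover'' clause are all fine and match the paper. The gap is in the amenable case. Your sketch proposes to run Hall's marriage theorem on a bipartite graph extracted from the doubly stochastic matrix $M_{xy}=|\langle u\delta_x,\delta_y\rangle|^2$, using the mass--concentration coming from Lemma~\ref{LemmaMeasureURA.Gen}. But this only yields the \emph{approximate} Hall condition: truncating at level $\delta$ gives a locally finite graph with $|N(A)|\geq (1-\varepsilon)|A|$ for every finite $A$, not $|N(A)|\geq |A|$; and without truncation the graph need not be locally finite, so the infinite Hall theorem does not apply to produce a perfect matching. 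The ``homogeneity of groups'' you invoke is never actually used---the matrix $M$ is not translation-invariant, so there is no obvious way to propagate a local deficiency into a contradiction. Recall also that coarse equivalence alone does not imply bijective coarse equivalence for amenable groups (e.g.\ the Dymarz examples), so the argument must genuinely exploit the unitary in a stronger way than you have.

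The paper handles the amenable case by a completely different route that avoids any matching argument: an amenable countable group automatically has property~A (\cite[Lemma~6.2]{Willett09}), and once one side has property~A, the isomorphism $\cstu(\Gamma)\cong\cstu(\Lambda)$ is already known to force a \emph{bijective} coarse equivalence by \cite[Corollary~6.13]{WhiteWillett20} (or \cite[Theorem~1.3]{BragaFarahVignati2020AnnInstFour}). So the dichotomy is non-amenable (use Corollary~\ref{bij cor}) versus amenable (use property~A and cite prior rigidity work); no new Hall-type argument is needed.
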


\begin{proof}
We use the well-known identification $\cstu(\Gamma)\cong \ell_\infty(\Gamma)\rtimes_r\Gamma$ (see \cite[Proposition 5.1.3]{BrownOzawa08}) to replace the crossed products in \eqref{as:22} with uniform Roe algebras.

As already noted in the proof of Corollary~\ref{bij cor}, \eqref{as:12} implies \eqref{as:22} in general, and \eqref{as:22} implies \eqref{as:12} when $\Gamma$ is non-amenable. On the other hand, if $\Gamma$ is amenable, then as it is a group it has property A by \cite[Lemma 6.2]{Willett09}. A $*$-isomorphism between uniform Roe algebras of uniformly locally finite metric spaces, one of which has property A, gives a bijective coarse equivalence between the underlying metric spaces (this was proved in \cite[Corollary 6.13]{WhiteWillett20} for metric spaces and in \cite[Theorem~1.3]{BragaFarahVignati2020AnnInstFour} for arbitrary coarse spaces).

Assume now that $\Gamma$ and $\Lambda$ are finitely generated and equipped with word metrics, and assume that $\cstu(\Gamma)\cong \cstu(\Lambda)$. Using our discussion so far, there is a bijective coarse equivalence $f\colon \Gamma\to \Lambda$. As $\Gamma$ and $\Lambda$ are finitely generated, it is straightforward to check that they are quasi-geodesic in the sense of \cite[Definition 1.4.10]{NowakYu_book12}. Hence, $f$ is a quasi-isometry (cf. \cite[Proposition A.3]{GuentnerKaminker04} or \cite[Corollary 1.4.14]{NowakYu_book12}). As $\inf_{\gamma\neq \gamma'\in \Gamma}d_\Gamma(\gamma,\gamma')=1$ and $\inf_{\lambda\neq \lambda'\in \Lambda}d_\Lambda(\lambda,\lambda')=1$, a bijective quasi-isometry is automatically bi-Lipschitz.
\end{proof}

\begin{remark}\label{no bij rem}
We do not know whether \eqref{as:1} and \eqref{as:2} from Corollary~\ref{bij cor} are equivalent for \ulf metric spaces in general: a counterexample, if it exists, would have to be a pair of amenable, \ulf metric spaces, neither of which has property A. Many such examples exist: for example, any expander defines an amenable, \ulf metric space without property A.
\end{remark}

\subsection{Uniform Roe coronas} We now give the proof of Theorem~\ref{T1.Corona}.  Recall that the \emph{uniform Roe corona}, $\roeq(X)$,  of $X$ is the quotient of the uniform Roe algebra $\cstu(X)$ of $X$ by the ideal of compact operators $\mathcal K(\ell_2(X))$. 

 \begin{proof}[Proof of Theorem~\ref{T1.Corona}] Suppose that $X$ and $Y$ are uniformly locally finite metric spaces and that $\roeq(X)$ and $\roeq(Y)$ are $*$-isomorphic.  We will prove that  $\mathrm{OCA}_{\mathrm{T}}$ and $\mathrm{MA}_{\aleph_1}$ together imply that   $X$ and $Y$ are coarsely equivalent. 

 Let $\Lambda\colon \roeq(X)\to \roeq(Y)$ be a $*$-isomorphism, and let $\pi_X\colon \cstu(X)\to \roeq(X)$ and $\pi_Y\colon \cstu(Y)\to \roeq(Y)$ be the canonical projections. By \cite[Theorem 1.5]{BragaFarahVignati2018}, $\Lambda$ and $\Lambda^{-1}$ are \emph{liftable on the diagonals} in the sense of \cite[Definition 1.4(2)]{BragaFarahVignati2018}, i.e., there are strongly continuous $*$-homomorphisms $\Phi\colon \ell_\infty(X)\to \cstu(Y)$ and $\Psi\colon \ell_\infty(Y)\to \cstu(X)$ such that 
 \[
 \Lambda(\pi_X(a))=\pi_Y(\Phi(a))\quad \text{and}\quad\Lambda^{-1}(\pi_Y(b))=\pi_X(\Psi(b))
 \]
 for all $a\in \ell_\infty(X)$ and all $b\in \ell_\infty(Y)$.
 
 \begin{claim}
 There are cofinite subsets $X'\subseteq X$ and $Y'\subseteq Y$ such that \[\inf_{x\in X'}\sup_{y\in Y}\|\Phi(e_{xx})\delta_y\|>0\text{ and }\inf_{y\in Y'}\sup_{x\in X}\|\Psi(e_{yy})\delta_x\|>0.\]
 \end{claim}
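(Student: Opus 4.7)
The plan is to adapt the argument for Theorem~\ref{T1} to the corona setting, handling the fact that $\Phi$ and $\Psi$ are only $*$-homomorphisms (not isomorphisms) with unital defects $1-\Phi(1)$ and $1-\Psi(1)$ that are finite-rank projections.

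First, I would record a few structural remarks. Because $\pi_Y(\Phi(e_{xx})) = \Lambda(\pi_X(e_{xx})) = 0$ (as $e_{xx}$ is compact), each $\Phi(e_{xx})$ is compact, hence a finite-rank projection in $\cstu(Y)$, and $q_\Phi := 1 - \Phi(1)$ is similarly finite rank since $\pi_Y(\Phi(1)) = 1$ in the corona. Injectivity of $\Lambda$ forces $\{x : \Phi(e_{xx}) = 0\}$ to be finite: else an infinite zero set $S$ gives $\Phi(\chi_S) = 0$ while $\pi_X(\chi_S) \neq 0$, contradiction. Parallel facts hold for $\Psi$ and $Y$. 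By SOT-continuity, $\Phi(\chi_A) \in \cstu(Y)$ for all $A\subseteq X$, so Lemma~\ref{LemmaUnifApprox} provides, for every $\eps > 0$, some $r > 0$ with all $\Phi(\chi_A)$ uniformly $\eps$-$r$-approximable; likewise for $\Psi$.

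Next, I would apply Corollary~\ref{CorollaryMeasureURA} after absorbing the defect. Fix $x_0 \in X$ with $\Phi(e_{x_0 x_0}) \neq 0$ and set $\tilde p_{x_0} := \Phi(e_{x_0 x_0}) + q_\Phi$ and $\tilde p_x := \Phi(e_{xx})$ for $x \neq x_0$. Since $\Phi(e_{x_0 x_0}) \leq \Phi(1) = 1 - q_\Phi$, these are mutually orthogonal projections summing SOT to $1_{\ell_2(Y)}$, with $\SOTh\sum_{x \in A}\tilde p_x \in \cstu(Y)$ for every $A \subseteq X$. Corollary~\ref{CorollaryMeasureURA} then yields $\delta > 0$ such that for every $y \in Y$ some $x \in X$ satisfies $\|\tilde p_x \delta_y\| \geq \delta$. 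Using $\|\tilde p_{x_0}\delta_y\|^2 = \|\Phi(e_{x_0 x_0})\delta_y\|^2 + \|q_\Phi \delta_y\|^2$, and the fact that $\|q_\Phi \delta_y\| \geq \delta/\sqrt{2}$ holds for only finitely many $y$ (since $\sum_y \|q_\Phi \delta_y\|^2 = \mathrm{tr}(q_\Phi) < \infty$), I extract a cofinite $Y'' \subseteq Y$ and a map $g\colon Y'' \to X$ with $\|\Phi(e_{g(y)g(y)})\delta_y\| \geq \delta/\sqrt{2}$ for every $y \in Y''$. The symmetric argument on the $\Psi$-family yields $\delta' > 0$, a cofinite $X'' \subseteq X$, and $f\colon X'' \to Y$ with $\|\Psi(e_{f(x)f(x)})\delta_x\| \geq \delta'$ for every $x \in X''$.

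The last and hardest step is to convert these direct estimates into the form required by the claim. The direct applications deliver $\inf_{y\in Y''}\sup_{x\in X}\|\Phi(e_{xx})\delta_y\| > 0$ and $\inf_{x\in X''}\sup_{y\in Y}\|\Psi(e_{yy})\delta_x\| > 0$, pairing $\Phi$ and $\Psi$ with quantifier orderings opposite to those in the claim. In the proof of Theorem~\ref{T1} the swap is immediate from the spatial identity $\|\Phi(e_{xx})\delta_y\| = \|\Phi^{-1}(e_{yy})\delta_x\|$, but here $\Phi$ and $\Psi$ are only mutual inverses modulo compacts. I expect the swap to follow by showing that, on a cofinite part of $X \times Y$, the matrices $M(x,y) := \|\Phi(e_{xx})\delta_y\|^2$ and $N(x,y) := \|\Psi(e_{yy})\delta_x\|^2$ are comparable up to positive multiplicative constants, using a quantitative version of the fact that $\Psi \circ \Phi$ and $\Phi \circ \Psi$ lift the identity on the corona (so $\Phi(e_{xx})$ and $\Psi(e_{yy})$ play dual roles modulo finite-rank error). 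Combined with the two direct estimates, this delivers both inequalities of the claim; establishing this comparability is the main technical obstacle.
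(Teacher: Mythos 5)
Your setup is essentially the paper's: the unital defect $1_{\ell_2(Y)}-\Phi(1_{\ell_2(X)})$ (resp.\ $1_{\ell_2(X)}-\Psi(1_{\ell_2(Y)})$) is identified as a finite-rank projection exactly as you do, and Corollary~\ref{CorollaryMeasureURA} is applied to the lifted diagonal family together with that defect (the paper adjoins the defect as one extra projection in the family rather than absorbing it into a single $\tilde p_{x_0}$, but the two devices are interchangeable, and your trace argument for discarding the finitely many bad basis vectors is fine). The proof is nevertheless incomplete, because the step you defer --- converting $\inf_{y\in Y''}\sup_{x}\|\Phi(e_{xx})\delta_y\|>0$ and $\inf_{x\in X''}\sup_{y}\|\Psi(e_{yy})\delta_x\|>0$ into the transposed inequalities required by the Claim --- is precisely the nontrivial content that the paper delegates to \cite[Lemma 6.3]{BragaFarahVignati2018}. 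In the paper's version one first obtains a cofinite $X'$ and $f\colon X'\to Y$ with $\inf_{x\in X'}\|\Psi(e_{f(x)f(x)})\delta_x\|>0$ from Corollary~\ref{CorollaryMeasureURA}, and then invokes that lemma to replace this, after shrinking $X'$, by $\inf_{x\in X'}\|\Phi(e_{xx})\delta_{f(x)}\|>0$. This duality is automatic for a spatially implemented isomorphism but genuinely requires proof for diagonal lifts of $\Lambda$ and $\Lambda^{-1}$; it is not a formal consequence of the ingredients you assemble, and you explicitly leave it as ``the main technical obstacle''.

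Two further cautions about your proposed route to the swap. First, two-sided comparability of $\|\Phi(e_{xx})\delta_y\|^2$ and $\|\Psi(e_{yy})\delta_x\|^2$ up to multiplicative constants on a cofinite subset of $X\times Y$ is stronger than what is needed and not what the cited lemma establishes; what is needed (and available) is only that a uniform lower bound on one quantity forces a uniform lower bound on the other along the corresponding pairs, off a finite exceptional set. Second, $\Psi\circ\Phi$ does not literally make sense on $\ell_\infty(X)$, since $\Phi$ takes values in $\cstu(Y)$ rather than in $\ell_\infty(Y)$; the statement that the compositions ``lift the identity'' can only be interpreted after passing to the corona, and unwinding that is exactly where the work in \cite[Lemma 6.3]{BragaFarahVignati2018} lies. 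As written, then, the argument has a genuine gap: you must either quote that lemma or supply a proof of the swap.
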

 
\begin{proof}
By symmetry, it is enough to show that the result holds for $\Phi$. For that, define $p \coloneqq 1_{\ell_2(X)}-\Psi(1_{\ell_2(Y)})$. Then, as $\Lambda(\pi_X(\Psi(1_{\ell_2(Y)})))= \pi_Y(1_{\ell_2(Y)})$, it follows that $\Lambda(\pi_X(p))=0$. Hence, $\pi_X(p)=0$ which means that $p$ is compact. As $p$ is a projection, $p$ has finite-rank. As $\Psi$ is strongly continuous, we have that \[1_{\ell_2(X)}=p+\Psi(1_{\ell_2(Y)})=p+\SOTh\sum_{y\in Y}\Psi(e_{yy}).\] Therefore, Corollary~\ref{CorollaryMeasureURA} gives a partition $X=X'\sqcup X''$ and a map $f\colon X'\to Y$ so that 
$\inf_{x\in X'}\|\Psi(e_{f(x)f(x)})\delta_x\|>0$ and $\inf_{x\in X''}\|p\delta_x\|>0$. As $p$ has finite rank, $X'$ must be cofinite. By \cite[Lemma 6.3]{BragaFarahVignati2018}, replacing $X'$ by a smaller cofinite subset of $X$ if necessary, we can assume that $\inf_{x\in X'}\|\Phi(e_{xx})\delta_{f(x)}\|>0$.  The claim follows.
\end{proof} 

 By the previous claim, it follows immediately from \cite[Theorem 6.11]{BragaFarahVignati2018} that $X$ and $Y$ are coarsely equivalent.
 \end{proof}


\subsection{Ghosts}

Finally in this section, we step back from direct rigidity results, and prove Theorem~\ref{TGhostsPreserved} on ghost operators.

\begin{proof}[Proof of Theorem~\ref{TGhostsPreserved}]
Let $X$ and $Y$ be \ulf metric spaces and $\Phi\colon\cstu(X)\to \cstu(Y)$ be a $*$-isomorphism. We need to prove that $\Phi$ sends ghost operators to ghost operators. Proceeding as in the proof of Theorem~\ref{T1}, there are $\delta>0$ and a coarse equivalence $f\colon X\to Y$ such that $\|\Phi(e_{xx})\delta_{f(x)}\|>\delta$ for all $x\in X$. 

Suppose that $A\subseteq X$ is infinite. Since $X$ and $Y$ are \ulfx, the set $f[A]$ is also infinite. As $\|\Phi(\chi_A)\delta_{f(a)}\|>\delta$ for all $a\in A$, $\Phi(\chi_{A})$ cannot be a ghost. 

Now fix an arbitrary nonghost $a\in \cstu(X)$. Pick $\varepsilon>0$ and sequences of distinct elements $(x_n)_n$ and $(z_n)_n$ in $X$, such that $|\langle a\delta_{x_n},\delta_{z_n}\rangle |>\varepsilon$ for all $n\in\N$. Passing to subsequences if necessary and defining $A\coloneqq \{x_n\mid n\in\N\}$ and $B\coloneqq \{z_n\mid n\in\N\}$, we can assume that $\chi_Ba\chi_A-\SOTh\sum_ne_{z_nz_n}ae_{x_nx_n}$ is compact. Therefore, as $\chi_Ba\chi_A$ belongs to the ideal generated by $a$, so does $\SOTh\sum_ne_{z_nz_n}ae_{x_nx_n}$. As $|\langle a\delta_{x_n},\delta_{z_n}\rangle |>\varepsilon$ for all $n\in\N$, it follows that $b\coloneqq \SOTh\sum_ne_{z_nx_n}$ belongs to the ideal generated by $a$, and hence so does $\chi_A=b^*b$ (alternatively, \cite[Lemma 3.4]{ChenWang04} also implies that $\chi_A$ belongs to the ideal generated by $a$, even without going to subsequences). 
Since $\Phi$ is a $*$-isomorphism, $\Phi(\chi_A)$ belongs to the ideal generated by $\Phi(a)$. Since ghosts form an ideal, if $\Phi(a)$ is a ghost, then so is $\Phi(\chi_A)$. Hence, by the previous paragraph, $\Phi(a)$ is not a ghost. A symmetric argument gives that nonghost operators in $\cstu(Y)$ are mapped by $\Phi^{-1}$ to nonghost operators in $\cstu(X)$, and the conclusion follows.
\end{proof}

\section{Rigidity of stable Roe algebras and Morita equivalence}\label{SectionStable}

Given a \ulf metric space $X$ and an infinite-dimensional separable Hilbert space $H$, the \emph{stable Roe algebra of $X$} is defined by 
\[
\csts(X) \coloneqq \cstu(X)\otimes \cK(H),
\]
where the tensor product above is the minimal tensor product of \cstar-algebra theory. We can describe $\csts(X)$ more concretely as follows. For $x\in X$, let $v_x \colon H\to \ell_2(\{x\},H)\subseteq \ell_2(X,H)$ denote the canonical inclusion. For a bounded operator $a$ on $\ell_2(X,H)$ and $x,y\in X$, define the matrix entries $a_{xy} \coloneqq v_x^* a v_y\in \mathcal{B}(H)$, and define the propagation of $a$ to be 
\begin{equation*}
\propg(a) \coloneqq \sup\{d_X(x,y) \mid a_{xy}\neq 0\}\in [0,\infty]. 
\end{equation*}
 Given a finite-dimensional vector space $H'\subseteq H$ and $r>0$, let $\csts[X,r,H']$ denote the subspace of all operators $a=[a_{xy}]\in \cB(\ell_2(X,H))$ with propagation at most $r$ and such that each $a_{xy}$ is an operator in $\cB(H')$ (where $\cB(H')$ is identified with a \cstar-subalgebra of $\cB(H)$ in the canonical way). Then, under the canonical identification $\ell_2(X)\otimes H=\ell_2(X,H)$, the stable Roe algebra $\csts(X)$ is the norm-closure in $\cB(\ell_2(X,H))$ of the union of all such $\csts[X,r,H']$.
 
We will now show that stable uniform Roe algebras are also coarsely rigid (which in turn will give us Theorem~\ref{TMorita}).

\begin{theorem} \label{T1.Stable}
Suppose $X$ and $Y$ are \ulf metric spaces such that $ \csts(X)$ and $\csts(Y)$ are isomorphic. Then $X$ and $Y$ are coarsely equivalent. 
\end{theorem}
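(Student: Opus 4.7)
The plan is to replay the proof of Theorem~\ref{T1} in the stable setting, adapting each of three main ingredients. To start, I would show that $\Phi\colon \csts(X)\to \csts(Y)$ is spatially implemented. Since $\mathcal{K}(H)$ is simple, the lattice of closed two-sided ideals of $\csts(X)=\cstu(X)\otimes \mathcal{K}(H)$ is canonically identified with the lattice of ideals of $\cstu(X)$. Hence the compact operators $\mathcal{K}(\ell_2(X,H))=\mathcal{K}(\ell_2(X))\otimes \mathcal{K}(H)$ form the unique minimal ideal of $\csts(X)$, and $\Phi$ restricts to an isomorphism of these minimal ideals. The standard spatial implementation argument (cf.\ Lemma~\ref{LemmaIsoImplementedUnitary}) then produces a unitary $u\colon \ell_2(X,H)\to \ell_2(Y,H)$ with $\Phi(a)=uau^*$ for all $a\in \csts(X)$; in particular, $\Phi$ is SOT-continuous, rank-preserving, and extends to a unital isomorphism on $\mathcal{B}(\ell_2(X,H))$.

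Next, I would upgrade Lemma~\ref{LemmaUnifApprox} to a two-parameter stable equi-approximability statement: for any sequence $(a_n)_n$ with $\SOTh\sum_{n\in M}a_n\in \csts(Y)$ for every $M\subseteq \N$ and any $\varepsilon>0$, there exist $r>0$ and a finite-dimensional subspace $H'\subseteq H$ such that each partial sum lies within $\varepsilon$ of an element of $\csts[Y,r,H']$. The proof is a direct adaptation of the Baire category argument of \cite[Lemma 4.9]{BragaFarah2018Trans}, using that $\csts(Y)$ is the closed union of the spaces $\csts[Y,r,H']$ along the countable cofinal family of pairs $(r,H')=(n,\spann(e_1,\dots,e_n))$ for a fixed orthonormal basis $(e_i)_{i\in\N}$ of $H$.

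Third, I would adapt the vector measure argument of Lemma~\ref{LemmaMeasureURA.Gen} as follows. Setting $\xi_0\coloneqq e_0$, consider the pairwise orthogonal rank-one projections $q_{x,i}\coloneqq \Phi(e_{xx}\otimes |e_i\rangle\langle e_i|)$ in $\csts(Y)$, indexed by $(x,i)\in X\times \N$, which sum SOT to $\Phi(1)=1$. Applying stable equi-approximability gives parameters $r, H'$ controlling all partial sums. For each $y\in Y$, I would then run the argument of Lemma~\ref{LemmaMeasureURA.Gen} taking $\pi$ to be the orthogonal projection of $\ell_2(Y,H)$ onto $\ell_2(B_r(y),H')$ (of real dimension at most $2N_r\dim_{\bbC} H'$, where $N_r=\sup_y|B_r(y)|$), applied to the unit vector $\delta_y\otimes \xi_0$. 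The key observation is that any $b\in \csts[Y,r,H']$ satisfies $b(\delta_y\otimes \xi_0)\in \ell_2(B_r(y),H')$, so the approximation error is correctly controlled by $1-\pi$. This yields a pair $(x,i)\in X\times \N$ with $\|q_{x,i}(\delta_y\otimes \xi_0)\|\geq \delta$ for some uniform $\delta>0$; setting $g(y)\coloneqq x$ defines $g\colon Y\to X$, and the symmetric argument applied to $\Phi^{-1}$ yields $f\colon X\to Y$ with an analogous lower bound.

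Finally, a stable analog of Proposition~\ref{PropOnceMapsFoundWeAreGood} would imply that $f$ and $g$ are mutual coarse inverses; the original proof carries over once one notes that the stable matrix units $e_{x_1 x_2}\otimes |e_i\rangle\langle e_j|$ have propagation $d(x_1,x_2)$ as elements of $\csts(X)$, so the propagation bookkeeping is essentially unchanged. The main technical obstacle is the two-parameter equi-approximability lemma: the original proof tracks only the propagation parameter and must be carefully adapted to simultaneously track a finite-dimensional subspace $H'\subseteq H$; once this is in place, the remaining steps amount to substituting stable matrix units for their unstable counterparts.
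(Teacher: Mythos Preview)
Your overall plan is the right one, but the third step contains a genuine gap. You propose to apply the (stable) equi-approximability lemma to the family $q_{x,i}=\Phi(e_{xx}\otimes |e_i\rangle\langle e_i|)$ indexed by $(x,i)\in X\times\N$. For that lemma to apply you need $\SOTh\sum_{(x,i)\in M}q_{x,i}\in \csts(Y)$ for \emph{every} $M\subseteq X\times\N$. This fails: already the full sum over $X\times\N$ equals $1_{\ell_2(Y,H)}$, which is not in the non-unital algebra $\csts(Y)$, and more generally $\sum_{(x,i)\in M}e_{xx}\otimes |e_i\rangle\langle e_i|$ is a diagonal operator whose $x$-block is the projection $\sum_{i:(x,i)\in M}|e_i\rangle\langle e_i|$; for generic $M$ these projections are neither finite-rank nor uniformly approximable by operators on a common finite-dimensional $H'$, so the operator does not lie in $\csts(X)=\cstu(X)\otimes\mathcal K(H)$. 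Since the spatially-implemented extension of $\Phi$ need not carry such operators into $\csts(Y)$, you have no control over the corresponding partial sums of the $q_{x,i}$, and the Baire-category/equi-approximability step has no purchase.

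The paper avoids precisely this obstacle by a different decomposition. One first fixes a unit vector $\xi$ and a \emph{single} finite-rank projection $p\in\mathcal B(H)$ and considers the family $p_y\coloneqq\Phi^{-1}(\chi_{\{y\}}\otimes p)$ indexed only by $y\in Y$. Now every partial sum $\Phi^{-1}(\chi_A\otimes p)$ genuinely belongs to $\csts(X)$, so equi-approximability applies. The price is that $\sum_y p_y=\Phi^{-1}(1\otimes p)$ is no longer the identity; Lemma~\ref{LemmaSpakulaWillettRiemann} shows one can choose $p$ so that this sum is uniformly close to $1$ on vectors $\delta_x\otimes\xi$, and Lemma~\ref{LemmaMeasureURA.Stable} runs the vector-measure argument under this weaker hypothesis $\|p_\N(\delta_x\otimes\xi)\|\geq 7/8$ rather than $p_\N=1$. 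Your two-parameter equi-approximability is fine (and is essentially the cited \cite[Lemma~3.14]{BragaVignati20}), but it cannot be applied to the family you chose; the missing idea is to cut down to a finite-rank $p$ first and then compensate for the fact that the resulting projections do not sum to $1$.
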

 
Before presenting the proof of Theorem~\ref{T1.Stable}, we need two lemmas.  The first is simple; we isolate it to keep the second lemma more transparent. 

\begin{lemma}\label{LemmaSpakulaWillettRiemann}
Let $X$ be a \ulf metric space, and let $(p_n)_{n\in \N}$ be a sequence of orthogonal projections in $\csts(X)$ such that 
 \[
 p_A \coloneqq \SOTh\sum_{n\in A}p_n
 \]
  is in $\csts(X)$ for all $A\subseteq \N$. Then, for all $\varepsilon>0$, there is a finite-rank projection $p\in \cB(H)$ such that $\| (1_{\ell_2(X,H)}-1_{\ell_2(X)}\otimes p)p_A\|\leq \varepsilon$ for all $A\subseteq \N$.
\end{lemma}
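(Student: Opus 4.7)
The plan is to exploit the elementary observation that every $p_A$ lies below $p_{\NN}$ as a projection, which collapses the uniform control across all $A \subseteq \NN$ to a single approximation of $p_{\NN}$ by a finite-rank-in-$H$ element.

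First I would verify that $p_A = p_A p_{\NN} = p_{\NN} p_A$: this follows from the mutual orthogonality of the $p_n$, which yields $p_A p_{\NN \setminus A} = 0$ (apply both sides to an arbitrary vector and use $p_n p_m = 0$ for $n \neq m$), hence $p_{\NN} = p_A + p_{\NN \setminus A}$ as a sum of orthogonal projections. Next, since $p_{\NN} \in \csts(X) = \cstu(X) \otimes \cK(H)$, the description of this algebra as the norm-closure of $\bigcup_{r, H'} \csts[X, r, H']$ yields $r > 0$, a finite-dimensional $H' \subseteq H$, and $T \in \csts[X, r, H']$ such that $\|p_{\NN} - T\| \leq \varepsilon$. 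Let $p$ be the orthogonal projection of $H$ onto $H'$ and set $P \coloneqq 1_{\ell_2(X)} \otimes p$; since every matrix entry of $T$ takes values in $H'$, we have $(1 - P) T = 0$.

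Putting these together, $(1 - P) p_{\NN} = (1 - P)(p_{\NN} - T)$ has norm at most $\varepsilon$, so for any $A \subseteq \NN$,
\[
\|(1 - P) p_A\| = \|(1 - P) p_{\NN} p_A\| \leq \|(1 - P) p_{\NN}\| \leq \varepsilon,
\]
and the finite-rank projection $p$ is as required. The main conceptual obstacle is resisting the temptation to pursue a Baire category argument in $2^{\NN}$ along the lines of Lemma~\ref{LemmaUnifApprox}; once one spots the domination $p_A \leq p_{\NN}$, the uniformity in $A$ collapses to a single approximation problem for $p_{\NN}$, and no equi-approximability machinery is needed.
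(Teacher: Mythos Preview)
Your proof is correct and follows essentially the same approach as the paper: both reduce the uniform estimate over all $A\subseteq\N$ to the single case $A=\N$ via the domination $p_A\leq p_\N$, then use that $p_\N\in\csts(X)$ to produce the finite-rank $p$. The only cosmetic difference is the final step---the paper invokes the $\mathrm{C}^*$-identity $\|qp_A\|=\|qp_Aq\|^{1/2}\leq\|qp_\N q\|^{1/2}=\|qp_\N\|$, while you use $p_A=p_\N p_A$ and submultiplicativity---but these are equivalent formulations of the same observation.
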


\begin{proof} 
As $p_\N$ is in $\mathrm{C}_s^*(X)$, there is a finite rank projection $p\in \cB(H)$ such that $\| (1_{\ell_2(X,H)}-1_{\ell_2(X)}\otimes p)p_\N\|\leq \varepsilon$.  For brevity, define $q\coloneqq (1_{\ell_2(X,H)}-1_{\ell_2(X)}\otimes p)$.  For any $A\subseteq \N$, $p_A\leq p_\N$; using this and the $\mathrm{C}^*$-equality
\[
\|qp_A\|=\|qp_Aq\|^{1/2}\leq \|qp_\N q\|^{1/2}=\|qp_\N\|\leq \varepsilon \eqno\qedhere
\]
	\end{proof}

Our next lemma is a stable version of Corollary~\ref{CorollaryMeasureURA}. Notice that the definition of $\varepsilon$-$r$-approximality (Definition~\ref{Def.e-m-approximated}) naturally extends to operators in $\cB(\ell_2(X,H))$. 

\begin{lemma}\label{LemmaMeasureURA.Stable}
Let $X$ be a \ulf metric space and $(p_n)_{n\in \N}$ be a sequence of orthogonal projections in $\csts(X)$ such that 
$p_A\coloneqq \SOTh\sum_{n\in A}p_n$ is in $\csts(X)$ for all $A\subseteq \N$. Then, for all unit vectors $\xi\in H$ with
\[
\inf_{x\in X}\|p_\N(\delta_x\otimes \xi)\|\geq  7/8,
\] 
we have that
\[
\inf_{x\in X}\sup_{n\in\N} \|p_n(\delta_x\otimes \xi)\|>0.
\]
\end{lemma}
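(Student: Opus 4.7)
The plan is to adapt the proof of Corollary~\ref{CorollaryMeasureURA} to the stable setting, with the key new ingredient being Lemma~\ref{LemmaSpakulaWillettRiemann}, which allows us to compress to a finite-dimensional ``Hilbert coordinate'' subspace $pH$. Combined with the usual spatial compression to $\ell_2(B_r(x))$, this places the induced vector measure in a finite-dimensional codomain whose real dimension is controlled uniformly in $x$, so Lemma~\ref{lem:approx-infinite} applies. The proof of Lemma~\ref{LemmaMeasureURA.Gen} then carries over almost verbatim.

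Fix a small $\varepsilon>0$ (e.g. $\varepsilon = 1/16$). First, invoke a stable analogue of Lemma~\ref{LemmaUnifApprox} (whose proof is a direct Baire-category adaptation of the original) applied to the family $(p_n)_n$ to produce $r>0$ such that every $p_A$ is $\varepsilon$-$r$-approximable in $\cB(\ell_2(X,H))$. Set $N_r \coloneqq \sup_x |B_r(x)|$. Next, apply Lemma~\ref{LemmaSpakulaWillettRiemann} with tolerance $\varepsilon$ to obtain a finite-rank projection $p\in\cB(H)$ of rank, say, $k$, such that $\|(1_{\ell_2(X,H)}-1_{\ell_2(X)}\otimes p)p_A\|\le \varepsilon$ for all $A\subseteq \N$. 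Set $\pi \coloneqq \chi_{B_r(x)}\otimes p$, a projection onto a subspace of real dimension at most $m\coloneqq 2N_r k$. Combining the two approximations yields, for all $A\subseteq \N$,
\[
\|(1-\pi)\,p_A(\delta_x\otimes \xi)\|\le 2\varepsilon.
\]

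Now fix $x\in X$. For $\delta>0$ to be chosen, set $M\coloneqq \{n:\|p_n(\delta_x\otimes \xi)\|\ge \delta\}$ and $M'\coloneqq \N\setminus M$, and define a countably additive vector measure $\mu\colon \mathcal{P}(M')\to \pi\ell_2(X,H)$ by $\mu(A)\coloneqq \pi p_A(\delta_x\otimes \xi)$. By the definition of $M'$, $\|\mu(\{n\})\|<\delta$ for all $n\in M'$. Since $\mu(M')/2$ lies in the convex hull of the range of $\mu$, Lemma~\ref{lem:approx-infinite} gives $A\subseteq M'$ with $\|\mu(A)-\mu(M')/2\|\le m\delta + \varepsilon'$ for arbitrary $\varepsilon'>0$. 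As in the proof of Lemma~\ref{LemmaMeasureURA.Gen}, decompose $p_A-\frac{1}{2}p_{M'}=\frac{1}{2}(p_A-p_{M'\setminus A})$ and split via $\pi$ and $1-\pi$ to get
\[
\|p_A(\delta_x\otimes \xi)-\tfrac{1}{2}p_{M'}(\delta_x\otimes \xi)\|\le 2\varepsilon+m\delta+\varepsilon'.
\]
Since $A\subseteq M'$ we have $p_Ap_{M'}=p_A$, so Lemma~\ref{LemmaReferee} applied to the projection $p_A$ and the vector $v\coloneqq p_{M'}(\delta_x\otimes \xi)$ yields $\|v\|\le 2(2\varepsilon+m\delta+\varepsilon')$.

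To conclude, use $p_\N=p_M+p_{M'}$ and the hypothesis $\|p_\N(\delta_x\otimes \xi)\|\ge 7/8$ to get
\[
\|p_M(\delta_x\otimes \xi)\|\ge \tfrac{7}{8}-4\varepsilon-2m\delta-2\varepsilon'.
\]
Choosing $\delta$ and $\varepsilon'$ small enough, depending only on $\varepsilon,N_r,k$ and hence independently of $x$, the right-hand side is positive. Therefore $M\neq \emptyset$, and consequently $\sup_n \|p_n(\delta_x\otimes \xi)\|\ge \delta$, which is the desired uniform lower bound. The main technical point is ensuring that all parameters ($r$, $k$, $m$, $\delta$) can be chosen independently of $x$; this is automatic because equi-approximability and Lemma~\ref{LemmaSpakulaWillettRiemann} give bounds uniform in $A$, while the hypothesis on $\xi$ is uniform in $x$. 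The one step genuinely beyond the stated lemmas is the stable version of equi-approximability, but this is a routine Baire-category adaptation.
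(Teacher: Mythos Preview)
Your proof is correct and follows essentially the same approach as the paper: both invoke the stable equi-approximability lemma, compress via Lemma~\ref{LemmaSpakulaWillettRiemann} to a finite-rank $p\in\cB(H)$, apply the vector measure estimate (Lemma~\ref{lem:approx-infinite}) on $\ell_2(B_r(x))\otimes pH$, and finish with Lemma~\ref{LemmaReferee}. The only cosmetic differences are that the paper uses $\varepsilon=1/8$ and reverses the roles of $M$ and $M'$, and that you package the two compressions into the single bound $\|(1-\pi)p_A(\delta_x\otimes\xi)\|\le 2\varepsilon$ up front rather than peeling them off sequentially.
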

 
\begin{proof}
Fix a unit vector $\xi\in H$ with 
\begin{equation}\label{over 7/8}
\inf_{x\in X}\|p_\N(\delta_x\otimes \xi)\|\geq 7/8.
\end{equation}
Lemma~\ref{LemmaUnifApprox} has a natural analog for stable Roe algebras (see \cite[Lemma 3.14]{BragaVignati20}), whence there is $r\geq 0$ such that each $p_B$ is $(1/8)$-$r$-approximable.\footnote{We will actually only need that $\|(\chi_{C}\otimes 1_H)p_B(\chi_{D}\otimes 1_H)\|\leq \varepsilon$ for all $C,D\subseteq X$ with $d(C,D)> r$ and all $A\subseteq \N$.}  Lemma \ref{LemmaSpakulaWillettRiemann} gives a finite rank projection $p\in \cB(H)$ such that  
\begin{equation}\label{p is big}
 \|(1_{\ell_2(X,H)}-1_{\ell_2(X)}\otimes p)p_B \|< 1/8 
\end{equation}
 for all $B\subseteq \N$. Define $N_r\coloneqq \sup_{x\in X}|B_r(x)|$ and $\delta\coloneqq (16N_r\rank(p))^{-1}$.  Fix some $x\in X$, and define $M:=\{n\in \N\mid \|p_n\delta_x\|<\delta\}$.  As $x$ is arbitrary and $\delta$ is fixed independently of $x$, to complete the proof it suffices to show that $M\neq \N$.  
 
Let $\pi:=\chi_{B_r(x)}\otimes p$, and consider the vector measure 
\[
\mu\colon\mathcal{P}(M)\to \ell_2(B_r(x))\otimes pH,\quad A\mapsto \pi p_A(\delta_x\otimes \xi).
\] 
As the $\mathbb{R}$-dimension of $\ell_2(B_r(x))\otimes pH$ is at most $2N_r \rank(p)$, Lemma \ref{lem:approx-infinite} and the choice of $M$ give us a subset $A$ of $M$ such that $\|\mu(A)-\frac{1}{2}\mu(M)\|\leq 2N_r\delta=1/8$.  In other words, we have that 
\[
\Big\|(1_{\ell_2(X)}\otimes p)(\chi_{B_r(x)}\otimes 1_H)\Big(p_A-\frac{1}{2}p_M\Big)(\delta_x\otimes \xi)\Big\|\leq \frac{1}{8}.
\]
As $p_A-\frac{1}{2}p_M=\frac{1}{2}p_A-\frac{1}{2}p_{M\setminus A}$ is $(1/8)$-$r$-approximable, this implies that 
\[
\Big\|(1_{\ell_2(X)}\otimes p)\Big(p_A-\frac{1}{2}p_M\Big)(\delta_x\otimes \xi)\Big\|\leq \frac{2}{8}.
\]
Hence line \eqref{p is big} applied with $B=A$ and $B=M$ implies that 
\[
\Big\|\Big(p_A-\frac{1}{2}p_M\Big)(\delta_x\otimes \xi)\Big\|<
\Big\|(1_{\ell_2(X)}\otimes p)\Big(p_A-\frac{1}{2}p_M\Big)(\delta_x\otimes \xi)\Big\|+\frac 3{16}\leq 
 \frac{7}{16}.
\]
As $p_A=p_Ap_M$, this and Lemma \ref{LemmaReferee} applied with $v=p_M(\delta_x\otimes \xi)$ imply that 
\begin{equation}\label{under 7/8}
\|p_M(\delta_x\otimes \xi)\|<7/8.
\end{equation}
On the other hand, from our initial assumption on $\xi$ from line \eqref{over 7/8} 
\[
\|(p_M+p_{\N\setminus M})(\delta_x\otimes \xi)\|\geq 7/8,
\]
so the above and line \eqref{under 7/8} imply that $p_{\N\setminus M}$ is non-zero.  Hence $\N\setminus M$ is non-empty, as required.
 \end{proof}
 
Before presenting the proof of Theorem~\ref{T1.Stable}, we isolate a result in the proof of \cite[Theorem 6.1]{SpakulaWillett2013AdvMath}, which is the analog of Proposition~\ref{PropOnceMapsFoundWeAreGood} in the setting of stable Roe algebras.

\begin{proposition}\label{PropOnceMapsFoundWeAreGood.Stable}
Let $X$ and $Y$ be \ulf metric spaces and $\Phi \colon \csts(X)\to\csts(Y)$ be a $*$-isomorphism. Suppose there are a finite-rank projection $p$ on $H$, a unit vector $\xi\in H$, and maps $f\colon X\to Y$ and $g\colon Y\to X$ such that 
\[
\inf_{x\in X}\|\Phi(\chi_{\{x\}}\otimes p_\xi)(\chi_{\{f(x)\}}\otimes p)\|>0
\] 
and 
\[
\inf_{y\in Y}\|\Phi^{-1}(\chi_{\{y\}}\otimes p_\xi)(\chi_{\{g(y)\}}\otimes p)\|>0,
\]
where $p_\xi$ is the projection on $H$ onto $\mathbb C\xi$. Then, $f$ is a coarse equivalence with coarse inverse $g$. \qed 
\end{proposition}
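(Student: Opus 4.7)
The plan is to adapt the arguments underlying Proposition~\ref{PropOnceMapsFoundWeAreGood} (as carried out in the proof of \cite[Theorem 4.1]{SpakulaWillett2013AdvMath}) to the stable setting; the stable analog itself is essentially \cite[Theorem 6.1]{SpakulaWillett2013AdvMath}. Two standard ingredients transfer from the unital case: first, a stable analog of Lemma~\ref{LemmaIsoImplementedUnitary}, obtained by spatially implementing the restriction of $\Phi$ to the essential ideal $\cK(\ell_2(X,H))\subseteq\csts(X)$, which makes $\Phi$ both SOT-continuous and rank-preserving; second, the stable equi-approximability lemma \cite[Lemma~3.14]{BragaVignati20}, which applies to any SOT-summable family whose subsums lie in $\csts(X)$.

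For the coarseness of $f$: let $\delta_0>0$ be the infimum in the first hypothesis. For $x,x'\in X$, the operator $a_{x,x'}\coloneqq e_{xx'}\otimes p_\xi\in\csts(X)$ is a rank-one partial isometry of propagation $d_X(x,x')$, with source projection $\chi_{\{x'\}}\otimes p_\xi$ and range projection $\chi_{\{x\}}\otimes p_\xi$; consequently $\Phi(a_{x,x'})$ is a rank-one partial isometry with source $\Phi(\chi_{\{x'\}}\otimes p_\xi)$ and range $\Phi(\chi_{\{x\}}\otimes p_\xi)$. The hypothesis supplies, for each $x$, a unit vector $\eta_x\in(\chi_{\{f(x)\}}\otimes p)\ell_2(Y,H)$ whose inner product with a unit vector spanning the one-dimensional range of $\Phi(\chi_{\{x\}}\otimes p_\xi)$ is at least $\delta_0$; since $\Phi(a_{x,x'})$ isometrically identifies these two one-dimensional ranges, a short computation yields $|\langle \Phi(a_{x,x'})\eta_{x'},\eta_x\rangle|\geq \delta_0^2$. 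Now fix $r>0$ and suppose for contradiction there exist pairs $(x_n,x_n')$ with $d_X(x_n,x_n')\leq r$ and $d_Y(f(x_n),f(x_n'))\to\infty$. After passing to a subsequence the operators $a_{x_n,x_n'}$ can be taken to have pairwise orthogonal sources and ranges, so all SOT-subsums $\sum_{n\in M}a_{x_n,x_n'}$ lie in $\csts(X)$ with propagation $\leq r$. The stable equi-approximability lemma then produces $s>0$ such that, for any $\varepsilon<\delta_0^2$, the $\Phi$-images of these subsums are $\varepsilon$-$s$-approximable. But once $d_Y(f(x_n),f(x_n'))>s$, the supports of $\eta_{x_n}$ and $\eta_{x_n'}$ in the $Y$-coordinate force every propagation-$s$ approximant to have vanishing matrix coefficient between them, contradicting the lower bound $\delta_0^2>\varepsilon$. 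By the symmetric hypothesis applied to $\Phi^{-1}$, $g$ is coarse as well.

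To conclude that $f$ and $g$ are mutual coarse inverses, one argues in the spirit of \cite[Theorem 6.1]{SpakulaWillett2013AdvMath}: applying $\Phi^{-1}$ to the inequality $\|(\chi_{\{f(x)\}}\otimes p)\Phi(\chi_{\{x\}}\otimes p_\xi)\|\geq\delta_0$ shows that the rank-one projection $\Phi^{-1}(\chi_{\{f(x)\}}\otimes p_\xi)$ has significant overlap with the vector $\delta_x\otimes\xi$, while the hypothesis on $\Phi^{-1}$ at $y=f(x)$ shows it also has significant overlap with $(\chi_{\{g(f(x))\}}\otimes p)\ell_2(X,H)$; a propagation/pigeonhole argument on the off-diagonal matrix entries of these projections, using equi-approximability and the already-established coarseness of $f$, then forces $d_X(x,g(f(x)))$ to be uniformly bounded. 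The symmetric statement is analogous. The main obstacle will be the asymmetric tensor-product bookkeeping: converting the operator-norm bound $\|\Phi(\chi_{\{x\}}\otimes p_\xi)(\chi_{\{f(x)\}}\otimes p)\|\geq\delta_0$ into a scalar matrix-coefficient lower bound that is uniform in $x$ and multiplies cleanly under composition requires a canonical choice of the witnessing unit vectors $\eta_x$, most naturally via the polar decomposition of $\Phi(\chi_{\{x\}}\otimes p_\xi)(\chi_{\{f(x)\}}\otimes p)$, so that the constant $\delta_0^2$ appearing in the argument above is genuinely uniform.
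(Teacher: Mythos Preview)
The paper does not prove this proposition; it simply records it as isolated from the proof of \cite[Theorem 6.1]{SpakulaWillett2013AdvMath} and appends a \qed. Your proposal is therefore a sketch of the argument you expect to find there, and the coarseness half is accurate: the rank-one partial isometry computation, the passage to a subsequence with orthogonal sources/ranges, and the use of the stable equi-approximability lemma \cite[Lemma 3.14]{BragaVignati20} are exactly the right ingredients and match the Spakula--Willett method.

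The ``mutual coarse inverse'' paragraph, however, contains a genuine slip. Applying $\Phi^{-1}$ to $\|(\chi_{\{f(x)\}}\otimes p)\Phi(\chi_{\{x\}}\otimes p_\xi)\|\geq\delta_0$ yields $\|\Phi^{-1}(\chi_{\{f(x)\}}\otimes p)(\delta_x\otimes\xi)\|\geq\delta_0$: it is the \emph{rank-$k$} projection $\Phi^{-1}(\chi_{\{f(x)\}}\otimes p)$ that overlaps $\delta_x\otimes\xi$, not the rank-one subprojection $\Phi^{-1}(\chi_{\{f(x)\}}\otimes p_\xi)$ as you write. In the non-stable case this distinction evaporates (both $e_{xx}$ and $e_{f(x)f(x)}$ are already rank one, so a single rank-one projection witnesses overlap at both $x$ and $g(f(x))$, and equi-approximability immediately bounds $d_X(x,g(f(x)))$). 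In the stable case the asymmetry between $p_\xi$ and $p$ means you only know that a $k$-dimensional range contains one unit vector concentrated near $x$ and another (namely the range of $\Phi^{-1}(\chi_{\{f(x)\}}\otimes p_\xi)$) concentrated near $g(f(x))$; a priori these could be orthogonal and supported arbitrarily far apart, so no bound on $d_X(x,g(f(x)))$ follows from your sketch as written.

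You correctly flag ``asymmetric tensor-product bookkeeping'' as the obstacle, but the resolution is not merely a canonical choice of $\eta_x$. One workable route (essentially what \cite[Theorem 6.1]{SpakulaWillett2013AdvMath} does) is to exploit the finite rank of $p$ together with equi-approximability of the family $\{\Phi^{-1}(\chi_{\{y\}}\otimes p)\}_{y\in Y}$ to control the entire $k$-dimensional range uniformly, rather than trying to force the overlap down to the rank-one piece.
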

 
\begin{proof}[Proof of Theorem~\ref{T1.Stable}]
Let $\Phi \colon \csts(X) \to \csts(Y)$ be a $*$-isomorphism and let $\Psi=\Phi^{-1}$. We need to prove that $X$ and $Y$ are coarsely equivalent. Fix a unit vector $\xi\in H$ and let $p_{ \xi}$ be the projection of $H$ onto $\mathbb C\xi$. By Lemma~\ref{LemmaSpakulaWillettRiemann}, there is a finite-rank projection $p\in \cB(H)$ such that 
 \[
 \| (1_{\ell_2(Y,H)}-1_{\ell_2(Y)}\otimes p)\Phi(\chi_{\{x\}}\otimes p_\xi)\|<\frac{1}{8}
 \]
 for all $x\in X$. For each $y\in Y$, define $p_y\coloneqq \Psi(\chi_{\{y\}}\otimes p)$ and for each $A\subseteq Y$, define $p_A\coloneqq \SOTh\sum_{y\in A} p_y$. Then
\begin{align*}
\|(1_{\ell_2(X,H)}-p_Y)(\delta_x\otimes \xi)\|&=\|\chi_{\{x\}}\otimes p_\xi-\Psi(1_{\ell_2(Y)}\otimes p)(\chi_{\{x\}}\otimes p_\xi)\|\\
&=\|\Phi(\chi_{\{x\}}\otimes p_\xi)-(1_{\ell_2(Y)}\otimes p)\Phi(\chi_{\{x\}}\otimes p_\xi)\|\\
&< \frac{1}{8}
\end{align*}
for all $x\in X$. Hence Lemma~\ref{LemmaMeasureURA.Stable} gives $\delta>0$ and $f \colon X\to Y$ such that 
\[
\| \Phi( \chi_{\{ x \}} \otimes p_\xi ) ( \chi_{\{f(x)\}} \otimes p ) \| = \| \Psi( \chi_{\{f(x)\}} \otimes p ) ( \delta_x \otimes \xi ) \| > \delta,
\]
for all $y\in Y$.

By replacing $\delta$ by a smaller positive real and $p$ by a larger finite-rank projection if necessary, a symmetric argument gives $g\colon Y\to X$ such that 
\[
\| \Psi ( \chi_{\{y\}} \otimes p_\xi ) ( \chi_{\{g(y)\}} \otimes p ) \| = \| \Phi ( \chi_{\{g(y)\}} \otimes p ) ( \delta_y \otimes \xi ) \| > \delta
\]
for all $x\in X$. By Proposition~\ref{PropOnceMapsFoundWeAreGood.Stable}, $f$ is a coarse equivalence with coarse inverse $g$.
\end{proof}

\begin{proof}[Proof of Theorem~\ref{TMorita}]
The implication \eqref{Item1TMorita} $\Rightarrow$ \eqref{Item2TMorita} was established in \cite[Theorem 4]{BNW07}. Suppose now that $\cstu(X)$ and $\cstu(Y)$ are Morita equivalent. Then, the stable Roe algebras $\csts(X)$ and $\csts(Y)$ must be isomorphic as shown in \cite[Theorem 1.2]{BGR77}\footnote{See \cite[Chapter 7]{Lance95} for a shorter proof.}. It then follows from Theorem~\ref{T1.Stable} that $X$ and $Y$ must be coarsely equivalent.
\end{proof}

\begin{remark}\label{no roe rem}
The \emph{Roe algebra} $\cst(X)$ of a \ulf metric space $X$ is defined to be the norm closure of the set of finite propagation operators on $\ell_2(X,H)$ such that all matrix entries $a_{xy}$ are compact. Thus $\cst(X)$ is defined analogously to $\csts(X)$, but where the condition that all matrix entries be supported on the same finite-dimensional subspace of $H$ (up to an approximation) is dropped. It is known that if $X$ and $Y$ satisfy suitable geometric assumptions (see \cite[Theorem A]{LiSpakulaZhang2020} for the state of the art) then the following are equivalent:
\begin{enumerate}[(1)]
\item $X$ and $Y$ are coarsely equivalent;
\item $\cst(X)$ and $\cst(Y)$ are isomorphic.
\end{enumerate}
We do not know if this equivalence holds unconditionally: the techniques introduced in this paper seem to need some sort of `uniform local finite-dimensionality', which is not satisfied in the Roe algebra case.
\end{remark}

\section{Uniform Roe algebras of coarse spaces}\label{SectionCoarse}

Theorem~\ref{T1CoarseSpEmb} is established in this section. For that, we recall the basics of coarse spaces --- we refer the reader to the monograph \cite{Roe_book03} for a detailed treatment of coarse spaces. Given a set $X$ and a family $\cE$ of subsets of $X\times X$, $\cE$ is a \emph{coarse structure on $X$} if 
\begin{enumerate}[(1)]
\item $\Delta_X=\{(x,x)\mid x\in X\}\in\cE$,
\item $E\in \cE$ and $F\subseteq E$ implies $F\in \cE$,
\item $E,F\in \cE$ implies $E\cup F\in \cE$, 
\item $E\in \cE$ implies $E^{-1} \coloneqq \{(y,x)\mid (x,y)\in E\}\in \cE$, and 
\item $E,F\in \cE$ implies $E\circ F \coloneqq \{(x,y)\mid \exists z, (x,z)\in E\wedge (z,y)\in F\}\in \cE$.
\end{enumerate} 
The pair $(X,\cE)$ is then called a \emph{coarse space}, and the elements of $\cE$ are called \emph{controlled sets} (or \emph{entourages}).  A coarse space $(X,\cE)$ is \emph{uniformly locally finite} if for each $E\in \cE$ the cardinalities of the vertical and horizontal sections,
\[ 
E^x \coloneqq \{(x,y)\in E \mid y\in X\} \quad \text{ and } \quad E_y \coloneqq \{(x,y)\in E \mid x\in X\},
\]
are uniformly bounded. 

The motivating examples of coarse spaces are metric spaces. Indeed, if $(X,d)$ is a metric space, $X$ is endowed with the coarse structure 
\[
\cE_d\coloneqq \Bigg\{E\subseteq X\times X~\Big|~ \sup_{(x,y)\in E}d(x,y)<\infty\Bigg\}.
\]
A coarse space $(X,\cE)$ is called \emph{metrizable} if $\cE=\cE_d$ for some metric $d$ on $X$. It is well-known that $(X,\cE)$ is metrizable if and only if $\cE$ is countably generated and connected\footnote{In this context, \emph{countably generated} means that there is a countable collection $S$ of subsets of $X \times X$ such that $\cE$ is the intersection of all coarse structures containing $S$, and \emph{connected} means that $\{(x,y)\}\in \cE$ for all $x,y\in X$. The connectedness condition in the metric setting means that metrics are not allowed to take infinite values.} (see \cite[Theorem 2.55]{Roe_book03}). If $(X,\cE)$ and $(Y,\cF)$ are coarse spaces and $f\colon X\to Y$ is a map, $f$ is called \emph{coarse} if $(f\times f)[E]\in \cF$ for all $E\in \cE$, and $f$ is called \emph{expanding} if $(f\times f)^{-1}[F]\in \cE$ for all $F\in \cF$. A map $f\colon X\to Y$ which is both coarse and expanding is a \emph{coarse embedding}.  

The definition of uniform Roe algebras naturally extends to uniformly locally finite coarse spaces: we say then that $a=[a_{xy}]\in \cB(\ell_2(X))$ has \emph{controlled support} if $ \supp(a) \coloneqq \{(x,y)\mid a_{xy}\neq 0\}$ is in $ \cE$ and the \emph{uniform Roe algebra of $(X,\cE)$}, denoted by $\cstu(X,\cE)$, is the norm-closure of all operators on $\ell_2(X)$ with controlled support. For brevity, we often simply write $\cstu(X)$ for $\cstu(X,\cE)$.

The goal of this section is to prove Theorem~\ref{T1CoarseSpEmb}, which asserts that if $(X,\cE)$ and $(Y,\cF)$ are uniformly locally finite coarse spaces such that  $(X,\cE)$ is metrizable and $ \cstu(X)$ and $\cstu(Y)$ are $*$-isomorphic, then $Y$ is countable and $X$ coarsely embeds into $Y$.

The countability of $Y$ is straightforward.

\begin{lemma}
 Suppose that $X$ and $Y$ are coarse spaces. 
 
 If  $\cstu(X)$ and $\cstu(Y)$ are $*$-isomorphic, then $|X|=|Y|$. 
 
 If $\cstu(X)$ embeds into $\cstu(Y)$, then $|X|\leq |Y|$. 
\end{lemma}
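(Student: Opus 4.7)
The plan is to recover $|X|$ from $\cstu(X)$ as a cardinal invariant, namely the supremum of cardinalities of families of pairwise orthogonal nonzero projections. First, for any coarse space $(X,\cE)$, note that the rank-one projection $e_{xx}$ has support $\{(x,x)\}\subseteq \Delta_X\in\cE$, so $e_{xx}\in\cstu(X)$ for every $x\in X$. Thus $(e_{xx})_{x\in X}$ is a family of $|X|$ pairwise orthogonal nonzero projections in $\cstu(X)$.

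Second, since $\cstu(Y)\subseteq \cB(\ell_2(Y))$ and the Hilbert space $\ell_2(Y)$ has Hilbert-space dimension $|Y|$ (whether $Y$ is finite or infinite), every family of pairwise orthogonal nonzero projections in $\cstu(Y)$ has cardinality at most $|Y|$: picking a unit vector in the range of each such projection yields an orthonormal subset of $\ell_2(Y)$ of the same cardinality.

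Now given an injective $*$-homomorphism $\Phi\colon \cstu(X)\to \cstu(Y)$, the image $(\Phi(e_{xx}))_{x\in X}$ consists of pairwise orthogonal projections ($\Phi$ preserves products and adjoints) which are nonzero ($\Phi$ is injective), and the map $x\mapsto \Phi(e_{xx})$ is injective for the same reason. Combining with the previous paragraph, $|X|\leq |Y|$. For the first assertion, apply this inequality to $\Phi$ and $\Phi^{-1}$ to obtain $|X|=|Y|$.

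There is essentially no obstacle in this proof; the only mild point to verify is that the argument handles both finite and infinite cases uniformly, which it does since $\dim_{\text{Hilb}}\ell_2(Y)=|Y|$ in both regimes. Note also that this lemma requires no uniform local finiteness or metrizability hypothesis on the coarse spaces: it relies only on the presence of the diagonal in every coarse structure.
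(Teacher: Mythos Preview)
Your proof is correct and follows essentially the same approach as the paper: both arguments produce the family $(e_{xx})_{x\in X}$ of pairwise orthogonal nonzero projections inside $\cstu(X)$ (the paper phrases this as the inclusion $\ell_\infty(X)\subseteq\cstu(X)$), push it forward by the embedding, and bound its cardinality by the Hilbert dimension of $\ell_2(Y)$. Your write-up is slightly more explicit about why $e_{xx}\in\cstu(X)$ and about the finite/infinite dichotomy, but the content is the same.
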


\begin{proof} 
	The first part follows from the second, but it is also evident from the fact that an isomorphism between uniform Roe algebras is spatially implemented (\cite[Lemma 3.1]{SpakulaWillett2013AdvMath}, see Lemma~\ref{LemmaIsoImplementedUnitary}). 

If   $\cstu(X)$ embeds into $\cstu(Y)$  then $\ell_\infty(X)$ embeds into $\cB(\ell_2(Y))$. Since $\ell_\infty(X)$ has $|X|$ orthogonal nonzero projections, an orthonormal basis of $\ell_2(Y)$ has cardinality at least $|X|$, hence $|Y|\geq |X|$.  
 \end{proof}

On a related note,  it is possible that $\cstu(X)$ embeds into $\cstu(Y)$, where~$Y$ is metrizable and $X$ is a countable, nonmetrizable, coarse space; see  \cite[Proposition 6.5]{BragaFarahVignati2020AnnInstFour}.   


Before proving the nontrivial part of Theorem~\ref{T1CoarseSpEmb}, we need some preliminary results. The following notation will be used: given a $*$-isomorphism $\Phi\colon\cstu(X)\to \cstu(Y)$, $x\in X$, $y\in Y$, and $\eta>0$, define
\begin{itemize}
\item
$ X_{y,\eta} \coloneqq \{z\in X\mid \|\Phi^{-1}(e_{yy})\delta_z\|\geq \eta\}$, and 
 \item $Y_{x,\eta} \coloneqq \{z\in Y\mid \|\Phi(e_{xx})\delta_z\|\geq \eta\}$.
 \end{itemize}
The next lemma isolates a result in \cite{BragaFarahVignati2020AnnInstFour} which we will need later.
 
\begin{lemma}[{\cite[Lemma 4.7]{BragaFarahVignati2020AnnInstFour}}]
Let $(X,\mathcal{E})$ and $(Y,\mathcal{F})$ be \ulf coarse spaces, $\Phi\colon\cstu(X)\to \cstu(Y)$ be a $*$-isomorphism, and $f\colon X\to Y$ be such that $\inf_{x\in X}\|\Phi(e_{xx})\delta_{f(x)}\|>0$. The following hold:
\begin{enumerate}[(1)]
\item If for all $\eps>0$ there is $\eta>0$ such that 
\[
\|\Phi (e_{xx})(1_{\ell_2(Y)}- \chi_{Y_{x,\eta}})\|\leq \eps,
\] 
for all $x\in X$, then $f$ is expanding.
\item If for all $\eps>0$ there is $\eta>0$ such that 
\[
\|\Phi^{-1}(e_{f(x)f(x)})(1_{\ell_2(X)}- \chi_{X_{f(x),\eta}})\|\leq \eps,
\]
for all $x\in X$, then $f$ is coarse.\qed
\end{enumerate} \label{LemmaTheMapIsCoarse.GenCoarseSp}
\end{lemma}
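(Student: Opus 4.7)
My plan is to use the spatial implementation of $\Phi$ provided by Lemma~\ref{LemmaIsoImplementedUnitary}, writing $\Phi(a) = u a u^*$ for a unitary $u \colon \ell_2(X) \to \ell_2(Y)$, and then read the coarse structures off matrix coefficients. Note that $\|\Phi(e_{xx}) \delta_y\| = |\langle u \delta_x, \delta_y\rangle| = \|\Phi^{-1}(e_{yy}) \delta_x\|$, so setting $\delta_0 \coloneqq \inf_x \|\Phi(e_{xx}) \delta_{f(x)}\| > 0$ we have $|\langle u \delta_x, \delta_{f(x)}\rangle| \ge \delta_0$. Using $\Phi(e_{xx}) = (u\delta_x)(u\delta_x)^*$, the hypothesis of (1) is equivalent to: for every $\varepsilon > 0$ there is $\eta > 0$ such that $\|(1 - \chi_{Y_{x,\eta}}) u \delta_x\| \le \varepsilon$ uniformly in $x$. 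By Bessel, $|Y_{x,\eta}| \le \eta^{-2}$, and $f(x) \in Y_{x,\eta}$ whenever $\eta \le \delta_0$. Analogously, hypothesis (2) says $u^* \delta_{f(x)}$ is uniformly $\varepsilon$-concentrated on the finite set $X_{f(x),\eta}$, which contains $x$.

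For part (1), fix $F \in \cF$. Since $(Y, \cF)$ is uniformly locally finite, a greedy (K\"onig-type) argument decomposes $F$ as a finite union of partial bijections $F_1, \dots, F_k$; let $v_i \in \cstu(Y)$ be the associated partial isometries and set $v_F \coloneqq \sum_i v_i$. Then $w \coloneqq \Phi^{-1}(v_F) \in \cstu(X)$, so by definition of the uniform Roe algebra on the coarse space $(X, \cE)$, for every $\varepsilon' > 0$ there is $E_{\varepsilon'} \in \cE$ and $w' \in \cB(\ell_2(X))$ with $\supp(w') \subseteq E_{\varepsilon'}$ and $\|w - w'\| < \varepsilon'$. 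The matrix coefficient $\langle w \delta_{x'}, \delta_x\rangle = \langle v_F u\delta_{x'}, u \delta_x\rangle$ decomposes, after replacing $u\delta_x$ and $u\delta_{x'}$ by their restrictions to $Y_{x,\eta}$ and $Y_{x',\eta}$ (with error $O(\varepsilon)$), into a finite double sum. If $(f(x), f(x')) \in F$, the term at $(f(x), f(x'))$ contributes $\overline{\alpha_x} \alpha_{x'}$ of modulus at least $\delta_0^2$, where $\alpha_z \coloneqq \langle u\delta_z, \delta_{f(z)}\rangle$. A universal lower bound $|\langle w\delta_{x'}, \delta_x\rangle| \ge c > 0$ then forces $(x, x') \in E_{\varepsilon'}$ once $\varepsilon' < c$, yielding $(f \times f)^{-1}[F] \subseteq E_{\varepsilon'} \in \cE$. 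Part (2) is proved symmetrically using an operator $w_E \in \cstu(X)$ supported on $E \in \cE$, its image $\Phi(w_E) \in \cstu(Y)$, and the concentration of $u^*\delta_{f(x)}$.

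The main obstacle is the cross-term estimate in the matrix coefficient computation. The off-diagonal terms in the double sum (pairs $(y, y') \ne (f(x), f(x'))$ with $(y, y') \in F$) could a priori interfere with the main contribution, since only the weak uniform Bessel bounds $\sum_{y \ne f(x)} |\langle u\delta_x, \delta_y\rangle|^2 \le 1 - \delta_0^2$ are available. Controlling these cross-terms requires careful use of the partial-bijection structure of $v_F$: the injectivity of each $\phi_i$ implements $F_i$ ensures that $\phi_i(y') \ne f(x)$ for $y' \ne f(x')$, so the unwanted inner products are of the form $\langle u\delta_x, \delta_y\rangle$ for $y \ne f(x)$ (and symmetrically in $x'$). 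Combined with Cauchy--Schwarz and the uniform bound on cardinalities of sections of $F$ coming from $\cF$ being uniformly locally finite, choosing $\eta$ sufficiently small --- so that the coefficients surviving truncation outside $\{f(x), f(x')\}$ are collectively small in $\ell_2$ norm --- yields the desired universal lower bound $c$. This is the quantitative heart of the argument; everything else is essentially bookkeeping about norm approximation and the definition of $\cstu$.
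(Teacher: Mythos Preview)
The paper does not prove this lemma: it is quoted from \cite[Lemma 4.7]{BragaFarahVignati2020AnnInstFour} and stated with a \qed, so there is no in-paper argument to compare to. That said, your proposed proof has a genuine gap in the cross-term estimate, and it is worth pinpointing.

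Your claimed control of the cross terms rests on the sentence ``choosing $\eta$ sufficiently small --- so that the coefficients surviving truncation outside $\{f(x), f(x')\}$ are collectively small in $\ell_2$ norm.'' This is not what hypothesis (1) gives. Writing $\xi_x=u\delta_x$ and $\beta_{x,y}=\langle \xi_x,\delta_y\rangle$, hypothesis (1) says that for each $\varepsilon$ there is $\eta$ with $\sum_{y\notin Y_{x,\eta}}|\beta_{x,y}|^2\le\varepsilon^2$ uniformly in $x$; it says nothing about $\sum_{y\in Y_{x,\eta}\setminus\{f(x)\}}|\beta_{x,y}|^2$, which can equal $1-\delta_0^2$ regardless of how small $\eta$ is (indeed, making $\eta$ small \emph{enlarges} $Y_{x,\eta}$). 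Concretely, reduce to a single partial bijection $\phi$ with graph contained in $F$ and $\phi(f(x'))=f(x)$. Then
\[
\langle \Phi^{-1}(v_\phi)\delta_{x'},\delta_x\rangle=\sum_{y'\in\mathrm{dom}(\phi)}\beta_{x',y'}\overline{\beta_{x,\phi(y')}},
\]
the main term has modulus $\ge\delta_0^2$, and Cauchy--Schwarz together with injectivity of $\phi$ bounds the remaining sum only by $(1-\delta_0^2)$. Thus your argument yields $|\langle\Phi^{-1}(v_\phi)\delta_{x'},\delta_x\rangle|\ge 2\delta_0^2-1$, which is useless unless $\delta_0>1/\sqrt{2}$ --- and nothing in the hypotheses guarantees that. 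One can even arrange cancellation: if $\xi_x=\tfrac{1}{\sqrt2}\delta_{f(x)}+\tfrac{1}{\sqrt2}\delta_{y_0}$ and $\xi_{x'}=\tfrac{1}{\sqrt2}\delta_{f(x')}-\tfrac{1}{\sqrt2}\delta_{y_0'}$ with $\phi(y_0')=y_0$, the matrix coefficient vanishes while hypothesis (1) is trivially satisfied (the $\xi$'s are finitely supported). Using $v_F=\sum_i v_i$ only multiplies the cross-term bound by the number $k$ of partial bijections, making things worse.

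The upshot is that a direct uniform lower bound on $\langle \Phi^{-1}(v_F)\delta_{x'},\delta_x\rangle$ is not available from the stated hypotheses, so the argument as written does not close. The proof in \cite{BragaFarahVignati2020AnnInstFour} avoids this obstruction by a different route; if you want to repair your approach, the point is that one cannot work with a single pair $(x,x')$ and a single matrix entry --- one has to exploit the hypotheses to control the family $(\Phi(e_{xx}))_x$ (respectively $(\Phi^{-1}(e_{f(x)f(x)}))_x$) uniformly, rather than trying to force a pointwise coefficient lower bound.
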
 

A simple application of Lemma~\ref{LemmaMeasureURA.Gen} gives:

\begin{corollary}\label{Cor.Expanding}
Let $X$ and $Y$ be \ulf coarse spaces and $\Phi\colon \cstu(X)\to \cstu(Y)$ be a $*$-isomorphism. If $X$ is metrizable, then for all $\varepsilon>0$ there is $\eta>0$ such that $\|\Phi (e_{xx})(1_{\ell_2(Y)}- \chi_{Y_{x,\eta}})\|\leq \eps$ for all $x\in X$.
\end{corollary}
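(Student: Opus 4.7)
The strategy is to reduce the desired uniform estimate to (a mild variant of) Lemma~\ref{LemmaMeasureURA.Gen} applied inside the \emph{metrizable} space $\cstu(X)$, using the family $(\Phi^{-1}(e_{yy}))_{y\in Y}$ of mutually orthogonal projections on $\ell_2(X)$ in place of the canonical diagonal on $Y$. Fix a metric $d_X$ on $X$ with $\cE=\cE_{d_X}$. The coarse-space version of Lemma~\ref{LemmaIsoImplementedUnitary} (which holds because $\cK(\ell_2(X))$ is still the minimal ideal of $\cstu(X)$) provides a unitary $u\colon\ell_2(X)\to\ell_2(Y)$ with $\Phi=\Ad(u)$. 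Setting $\xi_x\coloneqq u\delta_x$, a direct computation shows that $\Phi(e_{xx})$ is the rank-one projection onto $\xi_x$, that $Y_{x,\eta}=\{y\in Y\mid |\xi_x(y)|\ge\eta\}$, and that
\[
\|\Phi(e_{xx})(1-\chi_{Y_{x,\eta}})\|=\|(1-\chi_{Y_{x,\eta}})\xi_x\|.
\]
Moreover $B_x\coloneqq\supp\xi_x$ is countable (as $\xi_x\in\ell_2(Y)$), and $\Phi^{-1}(\chi_{B_x})\delta_x=u^*(\chi_{B_x}\xi_x)=\delta_x$.

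The heart of the argument is a \emph{uniform} approximability statement: for every $\varepsilon>0$ there exists $r>0$ (depending only on $\varepsilon$ and $\Phi$) such that $\Phi^{-1}(\chi_A)$ is $\varepsilon$-$r$-approximable in $\cstu(X)$ for every \emph{countable} $A\subseteq Y$. I would prove this by contradiction: otherwise, pick for each $n\in\N$ a countable $A_n\subseteq Y$ with $\Phi^{-1}(\chi_{A_n})$ not $\varepsilon$-$n$-approximable; set $B\coloneqq\bigcup_n A_n$, still countable, and apply Lemma~\ref{LemmaUnifApprox} to the orthogonal family $(\Phi^{-1}(e_{yy}))_{y\in B}$ (whose SOT-subsums lie in $\cstu(X)$ via $\Phi^{-1}$) to obtain a single $r^*$ that $\varepsilon$-$r^*$-approximates $\Phi^{-1}(\chi_C)$ for every $C\subseteq B$, contradicting the choice of $A_n$ for $n>r^*$. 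This uniformity step is the main obstacle, as a pointwise application of Lemma~\ref{LemmaUnifApprox} to each $B_x$ separately would produce an $r$ depending on $x$.

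With such $r$ in hand, fix $x\in X$ and apply Lemma~\ref{LemmaMeasureURA.Gen} to the countable family $(\Phi^{-1}(e_{yy}))_{y\in B_x}$. The only hypothesis not formally satisfied is $\SOTh\sum_{y\in B_x}\Phi^{-1}(e_{yy})=1_{\ell_2(X)}$; instead this SOT-sum equals $q_x\coloneqq\Phi^{-1}(\chi_{B_x})$, but by the first paragraph $q_x\delta_x=\delta_x$. The identity $p_M+p_{M'}=1$ is only used at the very end of the proof of Lemma~\ref{LemmaMeasureURA.Gen}, and replacing it by $p_M+p_{M'}=q_x$ still yields $\|p_M\delta_x\|\ge\|q_x\delta_x\|-\|p_{M'}\delta_x\|\ge 1-4\varepsilon$. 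Unpacking, with $N_r\coloneqq\sup_{x'\in X}|B_r(x')|$ and any $\eta\le\varepsilon/(2N_r)$, this reads
\[
\|\chi_{Y_{x,\eta}}\xi_x\|=\|\Phi^{-1}(\chi_{Y_{x,\eta}})\delta_x\|\ge 1-4\varepsilon,
\]
whence $\|(1-\chi_{Y_{x,\eta}})\xi_x\|\le\sqrt{1-(1-4\varepsilon)^2}\le 3\sqrt{\varepsilon}$. Starting instead with $\varepsilon/9$ in place of $\varepsilon$ and running the whole argument yields an $\eta>0$ for which $\|\Phi(e_{xx})(1-\chi_{Y_{x,\eta}})\|\le\varepsilon$ uniformly in $x\in X$.
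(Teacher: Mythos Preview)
Your argument is correct and follows the same core idea as the paper: apply Lemma~\ref{LemmaMeasureURA.Gen} to the family $(\Phi^{-1}(e_{yy}))_y$ (these projections live in $\cstu(X)$, where metrizability allows Lemma~\ref{LemmaUnifApprox} to produce the uniform $r$), and then finish using that $\Phi(e_{xx})$ is a rank-one projection.

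The difference is that your route is considerably more elaborate than necessary. The paper observes (via the cardinality lemma proved just before Corollary~\ref{Cor.Expanding}) that $|Y|=|X|$ and that a metrizable \ulf coarse space is countable; hence $Y$ itself is countable. Lemma~\ref{LemmaMeasureURA.Gen} can therefore be applied \emph{globally} to $(\Phi^{-1}(e_{yy}))_{y\in Y}$ with $\SOTh\sum_y\Phi^{-1}(e_{yy})=\Phi^{-1}(1)=1_{\ell_2(X)}$, so hypothesis~(2) holds verbatim and no modification of the lemma is needed. Since $M(x,\eta)=Y_{x,\eta}$ (as you essentially computed), the conclusion $\lim_{\eta\to 0}\inf_x\|\Phi^{-1}(\chi_{Y_{x,\eta}})\delta_x\|=1$ falls out directly, and a Pythagorean identity for the rank-one projection $\Phi(e_{xx})$ finishes. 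Your per-point restriction to $B_x=\supp\xi_x$, the separate diagonal argument to get uniform approximability over all countable $A\subseteq Y$, and the variant of Lemma~\ref{LemmaMeasureURA.Gen} with $q_x$ in place of $1$ all work, but they are detours around a single global application of the lemma. The payoff of your extra care is that you never invoke $Y$ being countable; in the paper's setting this is already available, so the simpler direct application is preferred.
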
 
 
 \begin{proof}
Applying Lemma~\ref{LemmaMeasureURA.Gen} to the projections $(\Phi^{-1}(e_{yy}))_{y\in Y}$, we have that 
\[\lim_{\eta\to 0}\inf_{x\in X}\|\Phi(e_{xx})\chi_{Y_{x,\eta}}\|=\lim_{\eta\to 0}\inf_{x\in X}\|\Phi^{-1}(\chi_{Y_{x,\eta}})e_{xx}\|=1.\]
So, for all $\varepsilon>0$ there is $\eta>0$ such that for all $x\in X$ we have that $\|\Phi(e_{xx})\chi_{Y_{x,\eta}}\|>1-\varepsilon$. As each $\Phi(e_{xx})$ is a rank 1 projection (remember that $\Phi$ is rank-preserving), we have 
\[
1 = \| \Phi(e_{xx})1_{\ell_2(Y)} \|^2 = \|\Phi(e_{xx})(1_{\ell_2(Y)} - \chi_{Y_{x,\eta}}) \|^2 + \| \Phi(e_{xx})\chi_{Y_{x,\eta}}\|^2,
\]
and the result follows. 
 \end{proof}
 
We now present a technical lemma whose proof is inspired by \cite[Proposition 3.1]{Sako14} (cf. \cite[Proposition 2.4]{CTWY08}). In a sense, this lemma shows that a kind of operator norm localization property holds for arbitrary spaces (see \cite[Section 2]{CTWY08} for the definition of the operator norm localization property).
 
\begin{lemma}\label{LemmaSortOfONLForAnySpace}
Given $\varepsilon,\delta>0$, there is $\gamma>0$ such that for all $s,t>0$ there is $r>0$ for which the following holds: Let $X$ be a \ulf metric space, and let $p,q,a\in \cB(\ell_2(X))$, where $p$ is a projection and $q$ is a rank 1 projection. If $\propg(q)\leq t$, $\|p-a\|<\gamma $, $\propg(a)\leq s$, and $\|pq\|\geq \delta$, then there is $C\subseteq X$ with $\diam(C)\leq r$ such that $\|p\chi_C\|\geq 1-\varepsilon$.
\end{lemma}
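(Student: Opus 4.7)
The plan is to extract from the hypotheses a single unit vector in the range of the projection $p$ that is essentially supported on a set $C$ of diameter controlled by $s$ and $t$, and then to use the idempotent identity $p = p^2$ to amplify the crude lower bound $\delta$ into the sharp bound $\|p\chi_C\|\geq 1-\varepsilon$.

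The starting observation I would rely on is that, because $q$ is a rank-one projection with $\propg(q)\leq t$, its range is spanned by a unit vector $\xi$ satisfying $q=\xi\xi^*$, and the propagation bound on $q$ forces $\diam(\supp \xi)\leq t$. Writing $D_0:=\supp(\xi)$ and $C:=\{y\in X\mid d(y,D_0)\leq s\}$, two routine triangle inequality checks give $\diam(C)\leq r:=t+2s$ and $a\xi=\chi_C a\xi$ (since $\propg(a)\leq s$). The hypothesis $\|pq\|\geq \delta$ reads $\|p\xi\|\geq \delta$, while $\|p-a\|<\gamma$ gives $\|p\xi-a\xi\|<\gamma$.

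Setting $\eta:=p\xi/\|p\xi\|$, which is a unit vector in the range of $p$, I would then combine these two facts to get $\|\chi_C\eta\|\geq 1-2\gamma/\delta$. The vector $\mu:=\chi_C\eta/\|\chi_C\eta\|$ is a unit vector supported in $C$; using $p\eta=\eta$ to write $p\mu=(\eta-p(1-\chi_C)\eta)/\|\chi_C\eta\|$, together with the Pythagorean estimate $\|(1-\chi_C)\eta\|\leq 2\sqrt{\gamma/\delta}$, gives $\|p\chi_C\|\geq \|p\mu\|\geq 1-2\sqrt{\gamma/\delta}$. Choosing $\gamma\leq \varepsilon^2\delta/4$, which depends only on $\varepsilon$ and $\delta$ as required, upgrades this to $\|p\chi_C\|\geq 1-\varepsilon$.

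I do not anticipate any serious obstacle, since every step is a bare norm inequality, uniform in $p$, $q$, and $a$. The conceptual step that makes the argument succeed --- and the only place where the combination of hypotheses is genuinely used --- is the translation between rank-one projections of small propagation and unit vectors of small support, which supplies the single test vector $\xi$ on which the projection-amplification trick ($p^2=p$) can be applied. Absent this rank-one/propagation interplay one would be forced to localize a generic vector in the range of $p$, which is precisely the failure mode of the full operator norm localization property in the absence of property~A; it is instructive that the present weaker statement, needing only a single well-chosen test vector, survives without any geometric hypothesis on $X$.
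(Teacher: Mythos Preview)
Your argument is correct, and it is genuinely different from---and in fact cleaner than---the paper's proof. The paper follows Sako's approach: it chooses $k$ with $(\delta/2)^{1/k}>1-\varepsilon$, approximates $p=p^k$ by $a^k$, and uses a telescoping product $\prod_{i=0}^{k-1}\|a^{i+1}q\|/\|a^iq\|\geq \delta/2$ to find an index $j$ at which the norm does not drop too much; the vector $\xi=a^j\zeta/\|a^j\zeta\|$ then has support of diameter at most $4ks+t$ and satisfies $\|p\xi\|\geq 1-\varepsilon$. You instead stay with the original vector $\xi$ coming from $q$, push it once through $a$ to localise $p\xi$ in the $s$-neighbourhood $C$ of $\supp(\xi)$, and then exploit the idempotent identity via $\eta=p\xi/\|p\xi\|\in\mathrm{ran}(p)$ to upgrade the crude bound $\delta$ to $1-\varepsilon$. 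Two remarks: your estimates are slightly roundabout (you already have $\|(1-\chi_C)\eta\|<\gamma/\delta$ directly from $(1-\chi_C)a\xi=0$, so the detour through Pythagoras to recover the weaker $2\sqrt{\gamma/\delta}$ is unnecessary and $\gamma=\varepsilon\delta$ would suffice), and your approach yields the sharper conclusion $r=t+2s$, independent of $\varepsilon$ and $\delta$, whereas the paper's $r=4ks+t$ grows with $k=k(\varepsilon,\delta)$.
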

 
\begin{proof}
Fix $\varepsilon,\delta>0$. Pick $k\in\N$ such that $(\delta/2)^{1/k}>1-\varepsilon$. Pick a positive $\gamma<(\delta/2)^{1/k}-1+\varepsilon$ small enough such that $\|p-a\|\leq \gamma$ implies $\|p-a^k\|\leq \delta/2$ for any projection $p$ and any operator $a$ in $ \cB(\ell_2(X))$. 

From now on, fix $s,t>0$, and $p,q,a\in \cB(\ell_2(X))$ as in the statement of the lemma. By our choice of $\gamma$, $\|p-a^k\|\leq \delta/2$. Hence, as $\|pq\|\geq \delta$, we have that $ \| a^kq\|\geq \delta/2$. Therefore, a telescoping argument implies that
\[ \prod_{i=0}^{k-1}\frac{\|a^{i+1}q\|}{\| a^iq\|}\geq \frac{\delta}{2}\]
(notice that $a^iq\neq 0$ for all $i$'s above). So, there is $j\in \{0,\ldots, k-1\}$ with 
\[
\|aa^{j}q\|\geq (\delta/2)^{\frac{1}{k}}\|a^{j}q\|.
\]
As $q$ is a rank 1 projection, we can pick a unit vector $\zeta\in \ell_2(X)$ such that $q=\langle\cdot,\zeta\rangle \zeta$. As $\propg(q)\leq t$, we have that $ \diam(\supp(\zeta))\leq t$. Define $\xi \coloneqq a ^{j} \zeta/\| a ^{j} \zeta\|$. As $\propg(a)\leq s$, it follows that \[\propg( a ^{j})\leq 2js +2s \leq 2ks.\]
Therefore, we must have that $\diam(\supp(\xi))\leq 4ks+t$.

At last, as $\|a\xi\|\geq (\delta/2)^{1/k}$, it follows that $\|p\xi\|\geq (\delta/2)^{1/k} -\gamma$. By our choice of $\gamma$, this shows that $ \|p\xi\|\geq 1-\eps$. The conclusion follows by letting $r=4ks+t$ and $C=\supp(\xi)$.
 \end{proof}

\begin{proof}[Proof of Theorem~\ref{T1CoarseSpEmb}]
Let $\Phi\colon\cstu(X)\to \cstu(Y)$ be a $*$-isomorphism and, for simplicity, let $\Psi=\Phi^{-1}$. We need to prove that $X$ coarsely embeds into $Y$. As $(X,d)$ is a metric space, Corollary~\ref{CorollaryMeasureURA} gives $\delta>0$ and $f\colon X\to Y$ such that $\|\Phi(e_{xx})\delta_{f(x)}\|>\delta$ for all $x\in X$.
 
By Lemma~\ref{LemmaTheMapIsCoarse.GenCoarseSp} and Corollary~\ref{Cor.Expanding}, $f$ is expanding. So, we are left to show that $f$ is coarse. 
 
Let $Z \coloneqq f(X)$ and pick $g \colon Z\to X$ such that $f(g(y))=y$ for all $y\in Z$. So, by our choice of $f$, it follows that 
\[
\|\Psi(e_{yy})\delta_{g(y)}\|=\|\Phi(e_{g(y)g(y)})e_{yy}\|=\|\Phi(e_{g(y)g(y)})\delta_{f(g(y))}\|>\delta,
\]
for all $y\in Z$. 

\begin{claim}\label{claim:aux}
For all $\varepsilon>0$ there is $r>0$ such that for all $y\in Z$, there is $C\subseteq X$ with $\diam(C)\leq r$, and such that $\|\Psi(e_{yy})\chi_{C}\|\geq 1-\varepsilon$.
\end{claim}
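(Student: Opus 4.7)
The plan is to combine Lemma~\ref{LemmaSortOfONLForAnySpace} with the equi-approximability lemma (Lemma~\ref{LemmaUnifApprox}). Lemma~\ref{LemmaSortOfONLForAnySpace} is essentially tailor-made for this claim: it converts (a) a uniform lower bound on the overlap of a projection with a rank-one projection $q$ and (b) a uniform finite-propagation approximation of the projection into (c) a small-diameter localization set $C$. The uniform lower bound is already available, since for each $y\in Z$ we have $\|\Psi(e_{yy})\delta_{g(y)}\|>\delta$, i.e. $\|\Psi(e_{yy})\,e_{g(y)g(y)}\|>\delta$ and $e_{g(y)g(y)}$ has propagation $0$. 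So everything hinges on producing the uniform finite-propagation approximation of $\Psi(e_{yy})$ for all $y\in Z$.

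That is where equi-approximability enters. The family $(\Psi(e_{yy}))_{y\in Y}$ consists of mutually orthogonal projections in $\cstu(X)$, and by SOT-continuity of $\Psi$ (Lemma~\ref{LemmaIsoImplementedUnitary}) every subsum equals $\Psi(\chi_M)$ for some $M\subseteq Y$, which lies in $\cstu(X)$. To actually apply Lemma~\ref{LemmaUnifApprox}, which is stated for sequences, I need $Y$ to be countable. This is free: $X$, being a uniformly locally finite metric space, is countable (fix a basepoint $x_0$; then $X=\bigcup_n B_n(x_0)$ is a countable union of finite balls), and the earlier cardinality lemma in this section gives $|Y|=|X|$. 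Enumerating $Y$ and applying Lemma~\ref{LemmaUnifApprox} yields, for every target accuracy $\gamma>0$, a single $s>0$ such that each $\Psi(e_{yy})$ is $\gamma$-$s$-approximable.

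Putting it together: given $\eps>0$, feed $\eps$ and $\delta$ into Lemma~\ref{LemmaSortOfONLForAnySpace} to extract the required $\gamma>0$, then apply equi-approximability to obtain the corresponding $s>0$, and finally re-invoke Lemma~\ref{LemmaSortOfONLForAnySpace} with parameters $s$ and $t=0$ to obtain $r>0$. For each $y\in Z$ the triple $(p,q,a)=(\Psi(e_{yy}),\,e_{g(y)g(y)},\,\text{a }\gamma\text{-}s\text{-approximation of }p)$ satisfies the hypotheses of Lemma~\ref{LemmaSortOfONLForAnySpace}, and its conclusion is exactly the existence of $C\subseteq X$ with $\diam(C)\le r$ and $\|\Psi(e_{yy})\chi_C\|\ge 1-\eps$.

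Since all the heavy lifting has already been packaged into Lemma~\ref{LemmaSortOfONLForAnySpace} and Lemma~\ref{LemmaUnifApprox}, there is no real obstacle; the only conceptual point to check is the countability of $Y$, which is needed to invoke equi-approximability in its stated form. Once that is in place, the claim follows directly from the matching parameters $\eps,\delta\leadsto\gamma\leadsto s\leadsto r$.
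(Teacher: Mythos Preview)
Your proposal is correct and follows essentially the same route as the paper's proof: extract $\gamma$ from Lemma~\ref{LemmaSortOfONLForAnySpace}, use equi-approximability (Lemma~\ref{LemmaUnifApprox}) to get a uniform $s$, then feed $s$ and $t=0$ back into Lemma~\ref{LemmaSortOfONLForAnySpace} to obtain $r$, applying it with $p=\Psi(e_{yy})$ and $q=e_{g(y)g(y)}$. Your extra care about the countability of $Y$ (needed to enumerate the family before invoking Lemma~\ref{LemmaUnifApprox}) is well-placed; the paper has already established $|Y|=|X|$ earlier in the section and simply takes this for granted at this point.
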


\begin{proof}
Fix $\varepsilon>0$ and let $\gamma>0$ be given by Lemma~\ref{LemmaSortOfONLForAnySpace} for $\varepsilon$ and $\delta$. As $X$ is metrizable, Lemma~\ref{LemmaUnifApprox} gives $s>0$ such that each $\Psi(e_{yy})$ is $\gamma$-$s$-approximable. Let $r>0$ be given by Lemma~\ref{LemmaSortOfONLForAnySpace} for $s$ and $t=0$. For each $y\in Z$, pick $a_z\in \cstu(X)$ with $\propg(a_z)\leq s$ such that $\|\Psi(e_{yy})-a_z\|\leq \gamma$. Since $\|\Psi(e_{yy})e_{g(y)g(y)}\|>\delta$ for all $y\in Z$, the result now follows from Lemma~\ref{LemmaSortOfONLForAnySpace}.
\end{proof}

\begin{claim}\label{claim:aux2}
For all $\varepsilon>0$, there is $\eta>0$ such that $\|\Psi(e_{yy})\chi_{X_{y,\eta}}\|\geq 1-\eps$, for all $y\in Z$.
\end{claim}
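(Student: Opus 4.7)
The plan is to leverage Claim~\ref{claim:aux} (which provides a bounded-diameter set capturing most of $\Psi(e_{yy})$) together with the rank-one structure of $\Psi(e_{yy})$ in order to pass from a spatial approximation to an approximation by the large-entry set $X_{y,\eta}$. Since $e_{yy}$ is a rank-one projection and $\Psi$ is rank-preserving (Lemma~\ref{LemmaIsoImplementedUnitary}, which applies here because the compacts remain the minimal ideal in $\cstu(X,\cE)$, so any $*$-isomorphism between uniform Roe algebras is spatially implemented), $\Psi(e_{yy})$ is a rank-one projection; write it as $p_{\xi_y}=\langle\cdot,\xi_y\rangle\xi_y$ for some unit vector $\xi_y\in\ell_2(X)$. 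A direct computation then gives $\|\Psi(e_{yy})\chi_S\|=\|\chi_S\xi_y\|$ for every $S\subseteq X$, and identifies $X_{y,\eta}$ with the large-entry set $\{z\in X\mid |\xi_y(z)|\geq\eta\}$.

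Given $\varepsilon>0$, I would fix $\varepsilon'>0$ slightly smaller than $\varepsilon$ (e.g., $\varepsilon'\coloneqq\varepsilon/2$) and apply Claim~\ref{claim:aux} at level $\varepsilon'$ to obtain $r>0$ such that for each $y\in Z$ there is a subset $C_y\subseteq X$ with $\diam(C_y)\leq r$ and $\|\chi_{C_y}\xi_y\|=\|\Psi(e_{yy})\chi_{C_y}\|\geq 1-\varepsilon'$. Because $X$ is uniformly locally finite, $|C_y|\leq N_r\coloneqq\sup_{x\in X}|B_r(x)|$, and this bound is uniform in $y$.

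The key step is then a simple truncation argument: the indices $z\in C_y\setminus X_{y,\eta}$ are at most $N_r$ in number and each satisfies $|\xi_y(z)|<\eta$, so together they contribute at most $N_r\eta^2$ to $\|\chi_{C_y}\xi_y\|^2$. Consequently
\[
\|\Psi(e_{yy})\chi_{X_{y,\eta}}\|^2=\|\chi_{X_{y,\eta}}\xi_y\|^2\geq \|\chi_{C_y\cap X_{y,\eta}}\xi_y\|^2\geq (1-\varepsilon')^2-N_r\eta^2.
\]
Choosing $\eta>0$ small enough (depending only on $\varepsilon$, $\varepsilon'$, and $N_r$, all of which are independent of $y$) so that $(1-\varepsilon')^2-N_r\eta^2\geq (1-\varepsilon)^2$ yields the desired uniform bound.

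I do not anticipate any genuine obstacle: the only subtle point is the passage from the operator-norm statement of Claim~\ref{claim:aux} to a vector-norm statement about $\xi_y$, which is immediate once the rank-one structure of $\Psi(e_{yy})$ is recorded. Everything else is a bookkeeping exercise in choosing constants, crucially using that $\diam(C_y)\leq r$ bounds $|C_y|$ uniformly so that the truncation error $N_r\eta^2$ does not depend on $y$.
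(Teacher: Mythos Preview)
Your argument is correct. The rank-one identity $\|\Psi(e_{yy})\chi_S\|=\|\chi_S\xi_y\|$ and the identification $X_{y,\eta}=\{z:|\xi_y(z)|\geq\eta\}$ are exactly right, and the truncation estimate $\|\chi_{C_y\setminus X_{y,\eta}}\xi_y\|^2\leq N_r\eta^2$ is valid because $|C_y|\leq N_r$ uniformly. The choice of $\eta$ depending only on $\varepsilon,\varepsilon',N_r$ then closes the argument.

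The paper does not prove this claim directly: it simply invokes \cite[Lemma~6.7]{WhiteWillett20} (equivalently \cite[Lemma~7.4]{BragaFarahVignati2018}), observing that the proofs of those lemmas go through verbatim once one has the conclusion of Claim~\ref{claim:aux} in hand as a substitute for the operator norm localization property. Your argument is, in effect, a self-contained and stripped-down version of what those external lemmas do in this rank-one situation: rather than appealing to a general black box, you exploit directly that a rank-one projection is determined by a single unit vector, so ``localizing the operator'' reduces to a simple $\ell_2$-truncation of that vector. This buys transparency and avoids the reader having to chase references; the paper's route, on the other hand, situates the claim within an existing framework and makes clear that the phenomenon is an instance of something more general.
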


\begin{proof}
This follows from the proof of \cite[Lemma 6.7]{WhiteWillett20} or, equivalently, and with a more similar terminology, from \cite[Lemma 7.4]{BragaFarahVignati2018}. Indeed, in \cite[Lemma 7.4]{BragaFarahVignati2018} the metric spaces are assumed to have the operator norm localization property. However, an inspection of the proof reveals that the argument holds under the assumption that, for all $\varepsilon>0$, there is $r>0$ such that, for each $z\in Z$, there is $C\subseteq X$ with $\diam(C)\leq r$ satisfying $\|\Psi(e_{yy})\chi_C\|\geq 1-\varepsilon$. This statement is nothing else but Claim~\ref{claim:aux}. 
\end{proof}

As each $\Psi(e_{yy})$ has rank 1, Claim~\ref{claim:aux2} implies that for all $\varepsilon>0$, there is $\eta>0 $ such that $\|\Psi(e_{yy})(1_{\ell_2(X)}- \chi_{X_{y,\eta}})\|\leq \eps$ for all $y\in Z$ (cf. the proof of Corollary~\ref{Cor.Expanding}). By Lemma~\ref{LemmaTheMapIsCoarse.GenCoarseSp}, we conclude that $f$ is coarse.
\end{proof} 
\begin{acknowledgments}
This paper was written under the auspices of the American Institute of Mathematics (AIM) SQuaREs program and as part of the `Expanders, ghosts, and Roe algebras' SQuaRE project. F.\ B.,\ B.\ M.\ B. \ and R.\ W.\ were partially supported by the US National Science Foundation under the grants DMS-1800322 and DMS-2055604, DMS-2054860, and DMS-1901522, respectively. I.\ F.\ was partially supported by NSERC. A.\ V.\ is supported by an `Emergence en Recherche' IdeX grant from the Universit\'e de Paris and an ANR grant (ANR-17-CE40-0026). Last, but not least, we are indebted to the anonymous referee for a thoughtful  report: in particular, the referee suggested that Lemma~\ref{lem:approx-infinite} can be proved using \cite{Lindenstrauss1971}, and provided a better insight into the proof of our Lemma~\ref{LemmaMeasureURA.Gen} that resulted in substantial simplification in this, and also of Lemma~\ref{LemmaMeasureURA.Stable}.

\end{acknowledgments}

\bibliographystyle{plain}
\bibliography{bibliography}
\end{document}